\documentclass[11pt]{amsart}
\usepackage{latexsym}
\usepackage{amscd}
\usepackage{amssymb}
\usepackage[all,cmtip]{xy}
\usepackage{tikz}
\usepackage{tikz-cd}
\usepackage{amsmath, pb-diagram}
\usepackage{enumerate}
\DeclareMathAlphabet{\eusm}{OT1}{eusm}{m}{n}
\newtheorem{theorem}{Theorem}[section]
\newtheorem{prop}[theorem]{Proposition}
\newtheorem{defn}[theorem]{Definition}
\newtheorem{cor}[theorem]{Corollary}
\newtheorem{lem}[theorem]{Lemma}

\newtheorem{expl}[theorem]{Example}
\newtheorem{expls}[theorem]{Examples}
\newtheorem{rem}[theorem]{Remark}

\hyphenation{ge-ne-ra-ted} \hyphenation{co-rres-pon-ding}
\hyphenation{su-ppo-se} \hyphenation{sub-mo-du-les}
\DeclareMathOperator{\Hom}{Hom}
\DeclareMathOperator{\Ext}{Ext}
\DeclareMathOperator{\Coker}{Coker}
\DeclareMathOperator{\Ker}{Ker}
\DeclareMathOperator{\dom}{dom}
\DeclareMathOperator{\Img}{Im}

\DeclareMathOperator{\Inj}{Inj}
\DeclareMathOperator{\PInj}{PInj}
\DeclareMathOperator{\FInj}{FInj}
\DeclareMathOperator{\cf}{cf}
\DeclareMathOperator{\rest}{\upharpoonright}
\DeclareMathOperator{\End}{End}
 \newcommand{\ModR}{\textrm{Mod-}R}









 \begin{document}
 \title{Ziegler Partial Morphisms in additive exact categories}
 \author[Cort\'es-Izurdiaga]{Manuel Cort\'es-Izurdiaga}
 \address{Departamento de Matem\'aticas, Universidad de Almeria, E-04071,
   Almeria, Spain} \email{mizurdia@ual.es} \thanks{The first author is partially supported by the Spanish Government under
   grants MTM2016-77445-P and MTM2017-86987-P which include FEDER funds of the EU}
 \author[Guil Asensio]{Pedro A. Guil Asensio}
 \address{Departamento de Matem\'aticas, Universidad
   de Murcia, Murcia, 30100, Spain} \email{paguil@um.es} \thanks{The
   second author is partially supported by the Spanish Government under
   grant MTM2016-77445-P which includes FEDER funds of the EU, and by Fundaci\'on S\'eneca of
   Murcia under grant
  19880/GERM/15}
 \author[Kalebo\~{g}az]{Berke Kalebo\~{g}az}
 \address{Department of Mathematics, Hacettepe University, Ankara,
   Turkey} \email{bkuru@hacettepe.edu.tr}
 \author[Srivastava]{Ashish K. Srivastava} \address{Department of
   Mathematics and Statistics, Saint Louis University, St.  Louis,
   MO-63103, USA} \email{ashish.srivastava@slu.edu} \thanks{The fourth author is partially supported by a grant from Simons
   Foundation (grant number 426367).}
 \maketitle

\begin{abstract}
  We develop a general theory of partial morphisms in
  additive exact categories which extends the model theoretic notion introduced by Ziegler in 
 the particular case of pure-exact sequences in the category of modules over a ring. We relate
  partial morphisms with (co-)phantom morphisms and injective approximations and study the
  existence of such approximations in these exact categories.
\end{abstract}

\bigskip

\bigskip

\section*{Introduction}

\bigskip

\noindent We introduce and develop a general theory of partial
morphisms in arbitrary additive exact categories, in the sense of Quillen. Exact categories are a natural generalization of abelian categories, and they play a quite useful role in several areas, like Representation Theory, Algebraic Geometry, Algebraic Analysis and Algebraic $K$-Theory. The main reason behind their usefulness is that they are applicable in many situations in which the classical theory of abelian categories does not apply, for instance, in the study of filtered objects and tilting theory.

 Partial morphisms were
introduced by Ziegler in \cite{Ziegler} in his study of Model
Theory of Modules, in order to prove the existence of pure-injective
envelopes. Recall that a short exact sequence of right modules is called pure if it remains exact upon tensoring by any left module (equivalently, when it is a direct limit of splitting short exact sequences). And therefore, purity reflects all decomposition properties of modules into direct summands. Ziegler realized that pure-injective modules (i.e., those modules which are injective with respect to pure-exact sequences) also extend other types of morphisms and called those morphisms as {\em partial morphisms}. These partial morphisms were central to giving a right pure version of the notion of essential monomorphisms in the category of modules.

 This concept was later stated in an algebraic language by
Monari Martinez \cite{Monari} in terms of systems of linear equations. Namely, she gave a matrix-theoretic
reformulation of it. Given a ring $R$ (not necessarily commutative), a
submodule $K$ of a right $R$-module $M$ and a right $R$-module $N$, a
homomorphism $f:K\rightarrow N$ is called a partial morphism from $M$
to $N$ if whenever we have
a system of linear equations $$\begin{bmatrix} x_1& . & . & . &
  x_m
\end{bmatrix} A=\begin{bmatrix} b_1 & . & . & . & b_n
\end{bmatrix}
$$ with $A\in \mathbb M_{m\times n}(R)$ and $b_1, \ldots, b_n \in K$,
which is solvable in $M$, then the system
$$\begin{bmatrix} x_1 & . & . & . & x_m
\end{bmatrix} A=\begin{bmatrix} f(b_1) & . & . & . & f(b_n)
\end{bmatrix}
$$ is also solvable in $N$. 

However, the above algebraic translation of the notion of partial morphisms 
does not shed much light about their role in the categorical study of purity.
In the present paper, we give a categorical definition
of this concept which can be stated in any additive exact category
$(\mathcal A;\mathcal E)$ (i.e., an additive category $\mathcal A$
with a distinguished class $\mathcal E$ of kernel-cokernel pairs which
play the role of short exact sequences). This definition reduces to
the original one introduced by Ziegler in the specific case of the
pure-exact structure in $\ModR$ consisting of all pure-exact sequences and it explains the importance of partial morphisms in a much more transparent way: a homomorphism $f:K\rightarrow N$ is partial respect to the inclusion $u$ of  $K$ in a module $M$ if and only if the induced morphism ${\rm Ext}^1(-,f)$ transforms $u$ in a pure monomorphism (see Theorem \ref{p:CharacterizationZieglerPartial}). 

 As Ziegler himself observed for the particular case of modules, this notion of partial morphisms allows us to introduce the definition of {\em small} morphisms in exact categories. And it is therefore related to the existence of approximations of modules. We explain how this idea of approximation is interrelated with others used in the literature. Namely, we show, in Theorem \ref{t:InjectiveHulls}, that this idea of approximation in terms of small extensions is equivalent to the one introduced by Enochs of monomorphic envelopes in the category of modules \cite{Enochs}, and to the classical one defined in terms of essential or pure-essential subobjects. Then we prove the existence of enough injectives in certain additive exact categories (see Theorem 4.4, which is one of the main results of this paper), and the existence of injective approximations (in the sense of small morphism mentioned before) in certain exact structures of abelian categories (see Theorem \ref{t:ExistenceHulls}). 

 As an application of our results, we are able to recover several well-known classical results such as the existence of injective hulls in Grothendieck categories, and the existence of pure-injective hulls in finitely accessible additive categories. But, moreover, our theory also includes the known results about approximations relative to a class of modules \cite{GobelTrlifaj}.  The key idea is that, under quite general assumptions, finding preenvelopes in an exact category with respect to a class $\mathcal X$ of objects is equivalent to show that there exists enough $E^\mathcal{X}$-injectives, where $E^\mathcal{X}$ is the exact structure consisting of all conflations  $A \rightarrow B \rightarrow C$ which are $\Hom(-,X)$-exact for every $X\in\mathcal{X}$. Applying these arguments to Theorem 4.4, we deduce Corollary~5.4, a result which recovers [17, Theorem 2.13(4)]. This is probably the most general known result of existence of (pre-)envelopes in exact categories. The same ideas are later applied to Theorem 4.4 and Theorem 4.11 to prove our Theorem 5.6, which covers all known results of approximations relative to cotorsion pairs in Grothendieck categories. We also relate all these constructions with the recent theory of approximations of objects by ideals of morphisms introduced in \cite{FuGuilHerzogTorrecillas} (see Corollary~\ref{c:Cophantom}). In conclusion, we provide a quite general theory in which most known results of approximations of objects in exact categories are deduced as consequences of our general results, and we also explain how they are interrelated with each other.

Let us briefly outline the structure of this paper. After recalling
some terminology and preliminary facts, we define, in Section 2,
partial morphisms with respect to an additive exact substructure
$\mathcal F$ (the $\mathcal F$-partial morphisms) of an exact
structure $\mathcal E$ in an additive category $\mathcal A$ (see
Section 1). In order to do it, we first need to give a categorical
characterization of partial morphisms relative to the pure-exact
structure in the a module category (see in Theorem
\ref{p:CharacterizationZieglerPartial}).  This characterization is
obtained in terms of pushouts and thus, it allows us to extend the
notion of partial morphism to the wider framework of additive exact
categories. Then, we study the properties of $\mathcal F$-partial
morphisms and extend several of the results proved by Ziegler to this
new setting. It is especially relevant that, as in the case of pure-injective modules, $\mathcal F$-partial morphisms can be used to
characterize $\mathcal F$-injective objects. More precisely, we prove,
in Theorem \ref{t:FInjectivePartial}, that an object $E$ in
$\mathcal A$ is $\mathcal F$-injective if and only if any
$\mathcal F$-partial morphism $f$ from an object $X$ to $E$ extends to
a morphism $g:X \rightarrow E$. This extends the corresponding theorem
for pure-injective modules proved by Ziegler \cite[Theorem 1.1,
Corollary 3.3]{Ziegler}.  Other advantage of our definition in terms
of pushouts is that it allows to relate partial and phantom morphisms
(see \cite{FuGuilHerzogTorrecillas} for a definition and main
properties of these phantom morphisms).

In Section 3, we introduce small subobjects using partial
morphisms. Then, we can define when an inclusion $u:U \rightarrow E$,
with $E$ injective, is small; which in turn is related to the notion
of injective approximations in the category. Recall that an injective
hull in an abelian category $\mathcal B$ is an essential inclusion
$u:U \rightarrow E$ with $E$ injective, in the sense that
$U \cap V \neq 0$ for each non-zero subobject $V$ of $E$. It is well
known that the injective hull $u$ is an injective envelope too, in the
sense that any endomorphism $f:E \rightarrow E$ such that $fu=f$ is an
automorphism. We compare these notions of small injective extensions
with that defined in terms of partial morphisms and prove, in Theorem
\ref{t:InjectiveHulls}, that for nice categories, all of them are
equivalent.

Our discussion of injective approximations in exact categories leads
us in Section 4 to study when these approximations do exist. The
solution to this problem requires answering the following two
questions:
\begin{enumerate}[(1)] 
\item Do there exist enough injectives in the category (in
the sense that each object can be embedded in an injective one)? 

\item Assuming the category has enough injectives, can these embeddings be
chosen small? 
\end{enumerate}

\noindent In Theorem \ref{t:ExistenceInjectives}, we prove that
Question 1 has a positive answer for additive exact categories
satisfying a generalization of Baer's lemma. And, in Theorem
\ref{t:ExistenceHulls}, we describe a construction of small injective
approximations for exact substructures of abelian categories. 

We end the paper with Section 5, in which we apply our results to study the approximation by objects in exact, Grothendieck and finitely accessible additive categories. In Corollary \ref{fp-inj} we prove that every module has an fp-injective preenvelope. In Corollary \ref{Grothendieck} we prove that every object in the Grothendieck category has an injective hull. In Corollary \ref{pinj} we prove that every object in abelian finitely accessible additive category has a pure-injective hull.

\bigskip

\section{Preliminaries}

\bigskip

\noindent Given a set $A$, we shall denote by $|A|$ its
cardinality. Given a map $f:A \rightarrow B$ and $C$ a subset of $A$,
we shall denote by $f \rest C$ the restriction. All our categories are
additive (that is, they have finite direct products and an abelian
group structure on each of their hom-sets which is compatible with composition).
Let us fix some notations about subobjects in a category.
\begin{defn}
Let $\mathcal A$ be a category and $A$ an object of $\mathcal A$.
\begin{enumerate}
\item Two monomorphisms $u:U \rightarrow A$ and $v:V \rightarrow A$ are equivalent if there exists an isomorphism $w:V \rightarrow U$ such that $uw=v$. An equivalence class of monomorphisms under this equivalence relation is a subobject of $A$. Given a representative $u:U \rightarrow A$ of this equivalence class, we shall simply say that $U$ is a subobject of $A$, we shall write $U \leq A$ and the monomorphism $u$ will be called an inclusion of $U$ in $A$.

\item Given two subobjects $U$ and $V$ of $A$, we shall write $U \subseteq V$ if $U \leq V$ and there exist inclusions $u:U \rightarrow A$, $v:V \rightarrow A$ and $w:U \rightarrow V$ such that $vw=u$.
\end{enumerate}
\end{defn}

\noindent By a {\em kernel-cokernel} pair in $\mathcal A$ we mean a pair of
composable morphisms
\begin{displaymath}
  \begin{tikzcd}
    B \arrow{r}{i} & C \arrow{r}{p} &A
  \end{tikzcd}
\end{displaymath}
such that $i$ is a kernel of $p$ and $p$ is a cokernel of $i$.

\bigskip

\noindent The following lemma is straightforward but very useful, so
we state it without any proof.

\begin{lem}\label{l:DiagramLemma}
  Let $\mathcal A$ be a category. Consider the following commutative
  diagram
  \begin{displaymath}
    \begin{tikzcd}
      B \arrow{r}{i} \arrow{d}{\varphi_1}& C \arrow{r}{p}
      \arrow{d}{\varphi_2} & A \arrow{d}{\varphi_3}\\
      B' \arrow{r}{i'} & C' \arrow{r}{p'} & A'
    \end{tikzcd}
  \end{displaymath}
  in which $p$ is a cokernel of $i$ and $i'$ is a kernel of $p'$. Then
  the following assertions are equivalent:
  \begin{enumerate}
  \item There exists $\alpha \colon A \rightarrow C'$ such that
    $p'\alpha = \varphi_3$.
  \item There exists $\beta \colon C \rightarrow B'$ such that
    $\beta i=\varphi_1$.
  \end{enumerate}
\end{lem}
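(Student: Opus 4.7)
The plan is a routine diagram chase, carried out symmetrically for the two implications: in each direction I would form a ``correction'' morphism $C \to C'$ which vanishes after composition with one of the structural maps, and then invoke the universal property of kernel or cokernel to factor it.

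For $(1) \Rightarrow (2)$, assume $\alpha : A \to C'$ satisfies $p'\alpha = \varphi_3$. Consider the morphism $\varphi_2 - \alpha p : C \to C'$ and compute $p'(\varphi_2 - \alpha p) = \varphi_3 p - \varphi_3 p = 0$, using commutativity of the right-hand square. Since $i'$ is a kernel of $p'$, this factors uniquely as $i'\beta$ for some $\beta : C \to B'$. Precomposing with $i$, using $pi = 0$ (because $p$ is a cokernel of $i$) together with commutativity of the left-hand square, yields $i'\beta i = \varphi_2 i = i' \varphi_1$; monicity of $i'$ then gives $\beta i = \varphi_1$.

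The implication $(2) \Rightarrow (1)$ is dual: given $\beta : C \to B'$ with $\beta i = \varphi_1$, form $\varphi_2 - i'\beta : C \to C'$ and check that precomposition with $i$ gives $i'\varphi_1 - i'\varphi_1 = 0$. Since $p$ is a cokernel of $i$, the map factors as $\alpha p$ for a unique $\alpha : A \to C'$. Composing with $p'$ on the left and using $p'i' = 0$ gives $p'\alpha p = p' \varphi_2 = \varphi_3 p$, and the epicity of $p$ produces $p'\alpha = \varphi_3$. There is no real obstacle; the whole argument uses only that $p$ is a cokernel of $i$ (hence $pi = 0$ and $p$ is epic), that $i'$ is a kernel of $p'$ (hence $p'i' = 0$ and $i'$ is monic), and the commutativity of the two given squares.
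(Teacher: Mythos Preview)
Your argument is correct and is exactly the standard diagram chase one would expect. The paper itself declines to give a proof, remarking only that the lemma ``is straightforward but very useful, so we state it without any proof''; your write-up fills in precisely what the authors had in mind. One small point worth making explicit for the reader: the subtraction $\varphi_2 - \alpha p$ (and its dual) relies on the standing convention in the paper that all categories are additive, which is stated a few lines above the lemma.
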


\noindent Given two morphisms $f:K \rightarrow M$ and $g:K \rightarrow
N$ in any category $\mathcal A$, the pushout diagram of $f$ and $g$ consists of an object $P$ and morphisms $i_1:M\rightarrow P$ and $i_2:N\rightarrow P$ such that the following diagram commutes
\begin{displaymath}
  \begin{tikzcd}
    K \arrow{r}{f} \arrow{d}{g} & M \arrow{d}{i_1}\\
    N \arrow{r}{i_2} & P
  \end{tikzcd}
\end{displaymath}
and the triple $(P, i_1, i_2)$ is universal in the sense that whenever $(Q, j_1, j_2)$ is any other triple making the above diagram commutative, then there exists a unique morphism $\varphi: P\rightarrow Q$ such that $j_1=\varphi i_1$ and $j_2=\varphi i_2$.

We recall some well-known facts about pushouts, which shall be used
throughout the paper.

\begin{lem}\label{l:PushoutCokernel}
  Let $\mathcal A$ be a category. Consider the following pushout
  diagram:
  \begin{displaymath}
    \begin{tikzcd}
      M \arrow{r}{f} \arrow{d}{g} & N \arrow{d}{\overline g}\\
      L \arrow{r}{\overline f} & P
    \end{tikzcd}
  \end{displaymath}
  Then:
  \begin{enumerate}
  \item The morphism $\overline g$ is a split monomorphism if and only
    if there exists $h:L \rightarrow N$ with $hg=f$.
  \item If $f$ has a cokernel $c:N \rightarrow C$, then the unique
    morphism $c':P \rightarrow C$ satisfying $c'\overline g=c$ and $c'\overline f = 0$ is a
    cokernel of $\overline f$.

  \item If $\overline f$ has a cokernel $c'$, then $c'\overline g$ is
    a cokernel of $f$.
  \end{enumerate}
\end{lem}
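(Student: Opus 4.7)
The plan is to prove all three parts directly from the universal property of the pushout (and the universal property of cokernels in (2) and (3)). Since the diagram is small, each item is essentially a short diagram chase, so I will lay out the approach rather than expand every detail.

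For part (1), the forward direction is straightforward: if $r \colon P \to N$ is a retraction of $\overline{g}$, then $h := r\overline{f}$ satisfies $hg = r\overline{f}g = r\overline{g}f = f$. For the converse, suppose $h \colon L \to N$ with $hg=f$. Then the pair $(1_N, h)$ makes the outer square commute, i.e.\ $1_N \circ f = h \circ g$, so the universal property of the pushout $(P,\overline{f},\overline{g})$ produces a unique $r \colon P \to N$ with $r\overline{g}=1_N$ and $r\overline{f}=h$. In particular $\overline{g}$ splits.

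For part (2), since $cf = 0 = 0 \cdot g$, the universal property of the pushout yields a unique $c' \colon P \to C$ with $c'\overline{g}=c$ and $c'\overline{f}=0$; that is exactly the morphism postulated in the statement. To verify that $c'$ is a cokernel of $\overline{f}$, I would take any $d \colon P \to D$ with $d\overline{f}=0$ and observe that $(d\overline{g})f = d\overline{f}g = 0$; by the cokernel property of $c$, there is a unique $e \colon C \to D$ with $ec = d\overline{g}$. One then checks that $ec'$ and $d$ agree after composition with both $\overline{g}$ and $\overline{f}$ (the latter being trivial, as both composites are zero), so the uniqueness in the pushout forces $ec'=d$. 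Uniqueness of $e$ follows from the fact that $c$, being a cokernel, is epi: if $e'c'=d$, then $e'c=e'c'\overline{g}=d\overline{g}=ec$, whence $e'=e$.

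Part (3) is dual in spirit: I will show $c'\overline{g}$ satisfies the universal property of a cokernel of $f$. First, $c'\overline{g}f = c'\overline{f}g = 0$. Given any $d \colon N \to D$ with $df=0$, the pair $(d,0)$ makes the outer square commute, so the pushout yields a unique $\tilde d \colon P \to D$ with $\tilde d\overline{g}=d$ and $\tilde d\overline{f}=0$. The cokernel property of $c'$ then furnishes a unique $e \colon C \to D$ with $ec'=\tilde d$, hence $e(c'\overline{g})=d$. For uniqueness of $e$, if $e'(c'\overline{g})=d$, then $(e'c')\overline{g}=d=(ec')\overline{g}$ and $(e'c')\overline{f}=0=(ec')\overline{f}$, so by the pushout's uniqueness $e'c'=ec'$, and $e'=e$ since $c'$ is epi.

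No single step is a genuine obstacle; the only mildly delicate point, recurring in both (2) and (3), is the uniqueness of the induced factorisation, for which the key is to invoke the pushout's uniqueness clause together with the fact that cokernels are epimorphisms. All other verifications are routine commutations.
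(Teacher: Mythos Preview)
Your proof is correct and is exactly the standard verification one gives for these folklore facts. The paper itself omits the proof entirely, introducing the lemma only with the remark that it ``recall[s] some well-known facts about pushouts''; your write-up is precisely the routine diagram chase the authors had in mind.
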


\noindent For exact categories, we mostly rely on \cite{Buhler} but we
use some terminologies of \cite{Keller} as well. Let $\mathcal A$ be a
category. An \textit{exact structure} on $\mathcal A$ is a family
$\mathcal E$ of distinguished kernel-cokernel pairs satisfying axioms
[E0] - [E2] and [E0$^{\textrm{op}}$] - [E2$^{\textrm{op}}$] from
\cite{Buhler}. We shall denote by $(\mathcal A;\mathcal E)$ the exact
category and elements in $\mathcal E$ will be called {\it
  conflations}. The kernel of a conflation is called {\it inflation}
and the cokernel of a conflation is called {\it deflation}. An
\textit{admissible subobject} of an object $A$ is a subobject $U$ of
$A$ such that one (and then any) inclusion
$i:U \rightarrow A$ is an inflation. The main example of an exact
category is an abelian category with the exact structure formed by all
kernel-cokernel pairs. We shall call this exact structure the
\textit{abelian exact structure}.

Let $(\mathcal A; \mathcal E)$ be an exact category. Given $E$ an object and $u:K \rightarrow A$ an inflation, we say that
$E$ is \textit{$u$-injective} (or injective with respect to $u$) if
for each morphism $f:K \rightarrow E$, there exists a
$g:A \rightarrow E$ with $gu=f$. If $\mathcal H$ is a class of
inflations, we say that the object $E$ is
$\mathcal H$-\textit{injective} if it is $u$-injective for each object
$u \in \mathcal H$. If $X$ is another object, we say that $A$ is
$X$-injective if it is injective with respect to each inflation
$u:K \rightarrow X$. Finally, we say that $E$ is injective if it is
injective with respect to each inflation. This is equivalent to the
functor $\Hom_\mathcal{A}(-,E)$, from $\mathcal A$ to the category
$\mathbf{Ab}$ of abelian groups, carrying inflations to
epimorphisms. We shall say that
$\mathcal A$ has enough injective objects if for each object $A$ in
$\mathcal A$, there exists an inflation $A \rightarrow E$ with $E$ an 
injective object in $\mathcal A$. The notions about projectivity in exact categories are
defined dually.

We shall use the following result about relative injective
objects, which is well known for the abelian exact structure of an
abelian category, and for the pure-exact structure in module
categories.

\begin{lem}\label{l:AInjective}
  Let $(\mathcal A; \mathcal E)$ be an exact category. Let $M$ be an object of $\mathcal A$ and
  \begin{displaymath}
    \begin{tikzcd}
      A \arrow{r}{i} & B \arrow{r}{p} & C
    \end{tikzcd}
  \end{displaymath}
  be a conflation. If $M$ is $B$-injective, then $M$ is both
  $A$-injective and $C$-injective.
\end{lem}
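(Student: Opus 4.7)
The plan is to handle the two cases ($A$-injectivity and $C$-injectivity) separately, each time reducing the given extension problem to one along an inflation into $B$, to which the hypothesis on $M$ applies.

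For the $A$-injective case, the argument is immediate. Given an inflation $u \colon K \to A$ and a morphism $f \colon K \to M$, the composition $iu \colon K \to B$ is again an inflation, since the class of inflations in an exact category is closed under composition (axiom [E1$^{\textrm{op}}$]). Applying the $B$-injectivity of $M$ to $iu$ and $f$, one obtains $h \colon B \to M$ with $h(iu)=f$, and then $g := hi \colon A \to M$ satisfies $gu=f$.

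For the $C$-injective case, given an inflation $v \colon K \to C$ and $f \colon K \to M$, I would form the pullback of $p$ along $v$:
\[
\begin{tikzcd}
D \arrow{r}{v'} \arrow{d}{p'} & B \arrow{d}{p}\\
K \arrow{r}{v} & C
\end{tikzcd}
\]
By the standard properties of exact categories, $p'$ is a deflation (pullback of a deflation along an arbitrary morphism, axiom [E2$^{\textrm{op}}$]), and moreover $v'$ is an inflation (pullback of an inflation along a deflation). The universal property of the pullback produces a unique $i' \colon A \to D$ with $v'i'=i$ and $p'i'=0$, and $i'$ is a kernel of $p'$, so $A \to D \to K$ is a conflation. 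Now apply the $B$-injectivity of $M$ to the inflation $v' \colon D \to B$ and the morphism $fp' \colon D \to M$: this gives $h \colon B \to M$ with $hv'=fp'$. Since $hi = hv'i' = fp'i' = 0$ and $p = \Coker(i)$, there is a unique $g \colon C \to M$ with $gp=h$. Finally, $gvp' = gpv' = hv' = fp'$, and because $p'$ is a deflation (hence an epimorphism) this forces $gv=f$, as required.

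The one delicate point is the claim that $v'$ is an inflation in the pullback square above; this is a standard but non-trivial consequence of the Quillen axioms (a form of the Noether/$3{\times}3$ lemma, essentially \cite[Proposition~2.15]{Buhler}). Once granted, the rest reduces to a routine diagram chase exploiting the cokernel property of $p$ and the epimorphism property of $p'$.
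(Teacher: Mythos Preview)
Your proof is correct and follows essentially the same approach as the paper's own proof: for $A$-injectivity you compose with $i$ to get an inflation into $B$, and for $C$-injectivity you pull back along the deflation $p$, invoke \cite[Proposition~2.15]{Buhler} to see the resulting map into $B$ is an inflation, apply $B$-injectivity, factor through the cokernel $p$, and cancel the pulled-back deflation. The only cosmetic difference is notation; the logical structure is identical.
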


\begin{proof}
  Given an inflation $u:K \rightarrow A$ and $f:K \rightarrow M$, $iu$
  is an inflation so that there exists $g:B \rightarrow M$ with
  $giu= f$. Then $M$ is $A$-injective.

  In order to see that $M$ is $C$-injective, take $u:K \rightarrow C$
  an inflation and $f:K \rightarrow M$. Taking pullback of $u$ along
  $p$ we get the following commutative diagram
  \begin{displaymath}
    \begin{tikzcd}
      P \arrow{r}{\overline p} \arrow{d}{\overline u} & K
      \arrow{d}{u}\\
      B \arrow{r}{p} & C \\
    \end{tikzcd}
  \end{displaymath}
  in which $\overline u$ is an inflation by \cite[Proposition
  2.15]{Buhler}. Let $\overline i:\overline A \rightarrow P$ be a kernel of $\overline p$. Using the universal property of the pullback, we can construct a commutative diagram with a conflation in each row,
  \begin{displaymath}
    \begin{tikzcd}
      \overline A \arrow{r}{\overline i} \arrow{d}{w} & P \arrow{r}{\overline p} \arrow{d}{\overline u} & K 
      \arrow{d}{u}\\
     A \arrow{r}{i} & B \arrow{r}{p} & C
    \end{tikzcd},
  \end{displaymath}  
  with $w$ an isomorphism. Now, using that $M$ is $B$-injective, there exists
  $g':B \rightarrow M$ with $g'\overline u=f \overline p$. Notice
  that $g'iw=g' \overline u \overline i = f \overline p \overline i=0$ and, since $w$ is isomorphism, $g'i=0$, so that there exists $g:C \rightarrow M$ with
  $gp=g'$. Then $gu\overline p=f \overline p$ and, since $\overline p$
  is an epimorphism, $gu=f$ as well. Then $M$ is $C$-injective.
\end{proof}

Let $(\mathcal A; \mathcal E)$ be an exact category. Given two objects
$A,B$ in $\mathcal A$, we shall denote by $\Ext(A,B)$ the abelian
group whose elements are the isomorphism classes of all conflations of
the form
\begin{displaymath}
  \begin{tikzcd}
    B \arrow{r}{i} & C \arrow{r}{p} &A
  \end{tikzcd}
\end{displaymath}
equipped with the {\em Baer sum} operation. Given any morphism
$g\colon B \rightarrow X$, we can define a morphism
$\Ext(A,g):\Ext(A,B) \rightarrow \Ext(A,X)$ as follows: for any
conflation
\begin{displaymath}
  \begin{tikzcd}
    \eta: & B \arrow{r}{i} & C \arrow{r}{p} &A
  \end{tikzcd}
\end{displaymath}
we take the pushout of $i$ along $g$ to get a commutative diagram
\begin{displaymath}
  \begin{tikzcd}
    \eta: & B \arrow{r}{i} \arrow{d}{g} & C \arrow{d}{\overline g}
    \arrow{r}{p}
    & A \arrow[equal]{d}\\
    \eta': & X \arrow{r}{\overline i} & P \arrow{r}{\overline p} & A
  \end{tikzcd}
\end{displaymath}
in which $\eta'$ is a conflation by \cite[Proposition
2.12]{Buhler}. Then define $\Ext(A,g)(\eta)=\eta'$. Similarly, we can
define, using pullbacks, $\Ext(f,B):\Ext(A,B) \rightarrow \Ext(X,B)$
for each morphism $f:X \rightarrow A$.

If we fix the objects $A$, $B$ and $X$, $\Ext(A,-)$ actually defines a
map from $\Hom_{\mathcal A}(B,X)$ to
$\Hom_{\mathbb Z}\big(\Ext(A,B),\Ext(A,X)\big)$ which actually is a
morphism of abelian groups. Similarly, we obtain a morphism of abelian
groups $\Ext(-,B)$ from $\Hom_{\mathcal A}(X,A)$ to
$\Hom_{\mathbb Z}\big(\Ext(A,B),\Ext(X,B)\big)$.

An \textit{exact substructure} $\mathcal F$ of $\mathcal E$
is an exact structure on $\mathcal A$ such that each conflation in
$\mathcal F$ (which we shall call $\mathcal F$-conflations) is a
conflation in $\mathcal E$. Inflations, deflations, admissible subobjects and
injective objects with respect to $\mathcal F$ will be called
$\mathcal F$-inflations, $\mathcal F$-deflations,
$\mathcal F$-admissible and $\mathcal F$-injective objects,
respectively. Moreover, if $(\mathcal A; \mathcal F)$ has enough
injective objects, we shall say that $\mathcal A$ has enough
$\mathcal F$-injective objects.

Given a class $\mathcal X$ of objects we shall denote by
$\mathcal E_\mathcal{X}$, the class of all
$\Hom_{\mathcal A}(\mathcal X,-)$-exact conflations, i.e., those
conflations
\begin{displaymath}
  \begin{tikzcd}
    A \arrow{r} & C \arrow{r} & B
  \end{tikzcd}
\end{displaymath}
such that
\begin{displaymath}
  \begin{tikzcd}
    \Hom_{\mathcal A}(X,A) \arrow{r} & \Hom_{\mathcal A}(X,C)
    \arrow{r} & \Hom_{\mathcal A}(X,B)
  \end{tikzcd}
\end{displaymath}
is a short exact sequence in the category of abelian groups for each
$X \in \mathcal X$. Dually, we define $\mathcal E^{\mathcal X}$ to be the class of all $\Hom_{\mathcal A}(-,\mathcal X)$-exact conflations, that is, those conflations
\begin{displaymath}
\begin{tikzcd}
A \arrow{r} & B \arrow{r} & C 
\end{tikzcd}
\end{displaymath}
such that
\begin{displaymath}
\begin{tikzcd}
\Hom_{\mathcal A}(B,X) \arrow{r} & \Hom_{\mathcal A}(C,X) \arrow{r} & \Hom_{\mathcal A}(A,X)
\end{tikzcd}
\end{displaymath}
is a short exact sequence in the category of abelian groups for each $X \in \mathcal X$.
Both $\mathcal E_{\mathcal X}$ and $\mathcal E^{\mathcal X}$ are additive
exact substructure of $\mathcal E$, \cite[Exercise 5.6]{Buhler}.

Using Lemma \ref{l:DiagramLemma} we get a similar description of
$\mathcal E_{\mathcal X}$-conflations to that of pure-exact sequences
in module categories (see \cite[34.5]{Wisbauer}). The result can be
easily dualized for $\mathcal E^{\mathcal X}$-conflations.

\begin{lem}
  Let $(\mathcal A; \mathcal E)$ be an exact category, $\mathcal X$ be
  a class of objects and
  \begin{displaymath}
    \begin{tikzcd}
      \eta: & A \arrow{r}{i} & B \arrow{r}{j} & C
    \end{tikzcd}
  \end{displaymath}
  be a conflation.
  \begin{enumerate}
  \item If $\eta \in \mathcal E_{\mathcal X}$ then for each morphism
    $f \colon M \rightarrow N$ with $\Coker f \in \mathcal X$ and
    commutative diagram
    \begin{displaymath}
      \begin{tikzcd}
        M \arrow{r}{f} \arrow{d}{\varphi_1} & N \arrow{d}{\varphi_2}\\
        A \arrow{r}{i} & B
      \end{tikzcd}
    \end{displaymath}
    there exists $\beta:N \rightarrow A$ such that
    $\beta f = \varphi_1$.
  \item If there exist enough $\mathcal E_{\mathcal X}$-projective
    objects, and $\eta$ satisfies (1), then
    $\eta \in \mathcal E_{\mathcal X}$.

  \item If $\eta \in \mathcal E^{\mathcal X}$ then for each morphism
    $f:M \rightarrow N$ with $\Ker f \in \mathcal X$ and commutative
    diagram
    \begin{displaymath}
      \begin{tikzcd}
        B \arrow{r}{j} \arrow{d}{\psi_1}& C \arrow{d}{\psi_2}\\
        M \arrow{r}{f} & N
      \end{tikzcd}
    \end{displaymath}
    there exists $\alpha:C \rightarrow M$ with $f\alpha = \psi_2$.
  \item If $\mathcal E^{\mathcal X}$ has enough injective objects and
    $\eta$ satisfies (3), then $\eta \in \mathcal E^{\mathcal X}$.
  \end{enumerate}
\end{lem}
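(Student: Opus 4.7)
The plan is to prove all four parts via Lemma~\ref{l:DiagramLemma}, by setting up in each case a $2\times 3$ commutative diagram whose two rows are $\eta$ together with an auxiliary kernel--cokernel sequence built from $f$. Parts~(1) and~(3) will exploit the $\Hom$-exactness built into $\mathcal E_{\mathcal X}$ and $\mathcal E^{\mathcal X}$ directly, while parts~(2) and~(4) will reduce to~(1) and~(3) via a pullback or pushout trick that turns membership in $\mathcal E_{\mathcal X}$ (respectively $\mathcal E^{\mathcal X}$) into a splitting question for a newly constructed conflation.

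For~(1), I would form the cokernel $c\colon N\to\Coker f$. Since $j\varphi_2 f=ji\varphi_1=0$, the morphism $j\varphi_2$ factors through $c$ as $\varphi_3 c$ for a unique $\varphi_3\colon\Coker f\to C$. Because $\Coker f\in\mathcal X$ and $\eta\in\mathcal E_{\mathcal X}$, the map $\Hom(\Coker f,B)\to\Hom(\Coker f,C)$ is surjective, so $\varphi_3=j\alpha$ for some $\alpha\colon\Coker f\to B$. Lemma~\ref{l:DiagramLemma} applied to the top row $M\xrightarrow{f}N\xrightarrow{c}\Coker f$ and the bottom row $\eta$ then produces the required $\beta\colon N\to A$ with $\beta f=\varphi_1$. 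The proof of~(3) is entirely dual: take $k\colon\Ker f\to M$, observe that $f\psi_1 i=\psi_2 ji=0$ factors $\psi_1 i$ as $k\varphi_1$, use $\eta\in\mathcal E^{\mathcal X}$ and $\Ker f\in\mathcal X$ to extend $\varphi_1$ along $i$ to $\beta\colon B\to\Ker f$, and invoke Lemma~\ref{l:DiagramLemma} once more to obtain $\alpha\colon C\to M$ with $f\alpha=\psi_2$.

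For~(2), fix $X\in\mathcal X$ and $g\colon X\to C$; by left-exactness of $\Hom(X,-)$ on conflations, it suffices to lift $g$ through $j$. I would pull $\eta$ back along $g$ to obtain a conflation $A\xrightarrow{\bar\imath}Y\xrightarrow{\bar\jmath}X$ and a morphism $g'\colon Y\to B$ with $g'\bar\imath=i$ and $jg'=g\bar\jmath$. Since $\Coker\bar\imath=X\in\mathcal X$, hypothesis~(1) applied to $\bar\imath$ with $\varphi_1=\mathrm{id}_A$ and $\varphi_2=g'$ provides $\beta\colon Y\to A$ with $\beta\bar\imath=\mathrm{id}_A$. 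The standard argument with the idempotent $\bar\imath\beta$ (which kills $\bar\imath$, hence factors through $\bar\jmath$) then splits the pullback conflation, and the resulting section $s\colon X\to Y$ of $\bar\jmath$ yields the lift $g's\colon X\to B$ of $g$. Part~(4) is dual: push $\eta$ out along $g\colon A\to X$ to get $X\xrightarrow{\bar\imath}Z\xrightarrow{\bar\jmath}C$ with $g'\colon B\to Z$, apply~(3) to $\bar\jmath$ (whose kernel is $X\in\mathcal X$) with $\psi_1=g'$ and $\psi_2=\mathrm{id}_C$ to split the pushout conflation, and compose with the resulting retraction of $\bar\imath$ to extend $g$ along $i$.

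The only real technical point, and the main obstacle to a careful write-up, is to verify at each application of Lemma~\ref{l:DiagramLemma} that the chosen rows really have the cokernel-then-kernel shape the lemma demands, and, in~(2) and~(4), to justify the passage from a split inflation $\bar\imath$ (respectively a split deflation $\bar\jmath$) to a genuine splitting of the whole pullback or pushout conflation inside $\mathcal E$. The enough-projectives hypothesis in~(2) and enough-injectives hypothesis in~(4) enter by guaranteeing that the exact substructures $\mathcal E_{\mathcal X}$ and $\mathcal E^{\mathcal X}$ are rich enough that this single lifting or extension criterion actually detects membership.
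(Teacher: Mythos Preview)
Your arguments for (1) and (3) match the paper's: both invoke Lemma~\ref{l:DiagramLemma} after completing the square to a map of kernel--cokernel pairs.

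For (2) and (4) you take a genuinely different route. The paper uses the enough-projectives hypothesis directly: given $X\in\mathcal X$ and $\varphi_3\colon X\to C$, it picks an $\mathcal E_{\mathcal X}$-conflation $K\to P\to X$ with $P$ projective, lifts $\varphi_3$ through $j$ using projectivity of $P$ to build a $3\times 3$ diagram, and then applies condition~(1) to the resulting square $K\to P$ over $A\to B$ together with Lemma~\ref{l:DiagramLemma}. Your pullback argument is cleaner and in fact proves more: pulling $\eta$ back along $g$ already manufactures a morphism $\bar\imath$ with cokernel $X\in\mathcal X$, and applying condition~(1) to the square $(\mathrm{id}_A,g')$ splits the pullback conflation without ever invoking a projective object. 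So your proof of (2) (and dually (4)) goes through \emph{without} the enough-projectives (resp.\ enough-injectives) hypothesis, yielding a strictly stronger statement than the one in the paper.

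The one confusion is your final paragraph: you assert that the enough-projectives and enough-injectives hypotheses ``enter by guaranteeing that the exact substructures \ldots\ are rich enough,'' but nothing in your actual argument uses them. You should either drop that sentence or, better, note explicitly that your pullback/pushout approach renders those hypotheses superfluous.
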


\begin{proof}
  (1) Follows from Lemma \ref{l:DiagramLemma}.

  (2) Take $X \in \mathcal X$ and $\varphi_3:X \rightarrow C$, a
  morphism. Let
  \begin{displaymath}
    \begin{tikzcd}
       K \arrow{r}{i} & P \arrow{r}{p} & X
    \end{tikzcd}
  \end{displaymath}
  be an $\mathcal E_\mathcal{X}$-conflation with $P$ being an
  $\mathcal E_{\mathcal X}$-projective object. Using the projectivity
  of $P$ we can construct a commutative diagram
  \begin{displaymath}
    \begin{tikzcd}
      K \arrow{r}{i} \arrow{d}{\varphi_1}& P \arrow{r}{p}
      \arrow{d}{\varphi_2} & X \arrow{d}{\varphi_3}\\
      A \arrow{r}{i'} & B \arrow{r}{p'} & C
    \end{tikzcd}
  \end{displaymath}
  Then the result follows from (1) and Lemma \ref{l:DiagramLemma}.

  (3) and (4) are proved dually.
\end{proof}

Given a class of objects $\mathcal X$ in $\mathcal A$, we define the right and left perpendicular classes to $\mathcal X$, $\mathcal X^\perp$ and ${^\perp}{\mathcal X}$, by
\begin{displaymath}
\mathcal X^{\perp} = \{Y \in \mathcal A \mid \Ext(X,Y)=0, \forall X \in \mathcal X\}
\end{displaymath}
and
\begin{displaymath}
{^\perp}{\mathcal X} = \{Y \in \mathcal A \mid \Ext(X,Y)=0, \forall X \in \mathcal X\}
\end{displaymath}
respectively. A cotorsion pair in $\mathcal A$ is a pair of classes $(\mathcal B,
\mathcal C)$ of objects of
$\mathcal A$, such that $\mathcal B = {^\perp}{\mathcal C}$ and $\mathcal C = \mathcal B^{\perp}$. The
cotorsion pair is said to be complete if for each object $A$ of
$\mathcal A$ there exist conflations
\begin{displaymath}
  \begin{tikzcd}
    A \arrow{r} & C_1 \arrow{r} & B_1
  \end{tikzcd}
\end{displaymath}
and
\begin{displaymath}
  \begin{tikzcd}
    C_2 \arrow{r} & B_2 \arrow{r} & A
  \end{tikzcd}
\end{displaymath}
with $B_1, B_2 \in \mathcal B$ and $C_1, C_2 \in \mathcal C$.

All rings in this paper will be associative with unit (except those in Section 5.3) and
all modules will be right modules. Let $R$ be a ring. As in any
abelian category, we have the abelian exact structure $\mathcal E$ in $\textrm{Mod-}R$ consisting of
all kernel-cokernel pairs. If $\mathcal P$ is the class of all
finitely presented modules, the exact structure $\mathcal E_{\mathcal
  P}$ consists of all pure conflations and will be called the
\textit{pure-exact structure} on $\ModR$. Conflations in the
pure-exact structure can be characterized in terms of systems of
equations \cite[34.5]{Wisbauer}. Given a module $M$, recall that a
\textit{system of linear
  equations over $M$} is a system of equations
\begin{displaymath}
  \sum_{i=1}^n X_ir_{ij} = a_j \quad j \in \{1, \ldots, m\}
\end{displaymath}
with $r_{ij} \in R$ and $a_j\in M$ for each $i \in \{1, \ldots, n\}$
and $j \in \{1, \ldots, m\}$. Then a conflation in $\ModR$,
\begin{displaymath}
  \begin{tikzcd}
    K \arrow{r}{f} & M \arrow{r}{g} & L
  \end{tikzcd}
\end{displaymath}
is pure if and only if any system of linear equations over $\Img f$ that has a
solution over $M$, has a solution over $\Img f$. We shall denote by
$\Inj$ the class of all injective modules and by $\PInj$ the class of
all pure-injective modules (that is, the class of all injective
objects in the exact category $\ModR$ with the pure-exact structure). 


\bigskip

\section{Partial Morphisms}
\label{sec:ginj-peri-modul}

\bigskip

\noindent The initial inspiration for our work comes from the
classical notion of partial morphism introduced by Ziegler in
\cite{Ziegler} in the category of right modules over a ring.

\begin{defn} \label{d:PartialZiegler} Let $R$ be a ring and $M, N$ be
  right $R$-modules.
  \begin{enumerate}
  \item A partial morphism from $M$ to $N$ is a morphism
    $f \colon K \rightarrow N$, where $K$ is a submodule of $M$, such
    that for any system of linear equations over $K$,
    \[\sum_{i=1}^nX_ir_{ij}=k_j \quad j \in \{1, \ldots, m\},\]
    if the system has a solution in $M$, then the system
    \[\sum_{i=1}^nX_ir_{ij}=f(k_j) \quad j \in \{1, \ldots, m\}\] has
    a solution in $N$ as well. We shall call the submodule $K$ the
    domain of $f$ and we shall denote it by $\dom f$.

  \item A partial morphism from $M$ to $N$ is called a partial
    isomorphism if each system of linear equations over $\dom f$,
    \[\sum_{i=1}^nX_ir_{ij}=k_j \quad j \in \{1, \ldots, m\},\]
    has a solution over $M$ if and only if the system of linear equations
    \[\sum_{i=1}^nX_ir_{ij}=f(k_j) \quad j \in \{1, \ldots, m\}\]
    has a solution over $N$.
  \end{enumerate}
\end{defn}

\noindent The following characterization relates partial morphisms
with the pure-exact structure in the categories of modules. It will
allow us to define partial morphisms in any exact category. Let us recall the construction of the pushouts in module categories. Given a ring $R$ and two morphisms $f:K \rightarrow M$ and $g:K \rightarrow N$ in $\textrm{Mod-} R$, the
pushout of $g$ along $f$ is given by the commutative diagram,
\begin{displaymath}
  \begin{tikzcd}
    K \arrow{r}{f} \arrow{d}{g} & M \arrow{d}{\overline g}\\
    N \arrow{r}{\overline f} & P
  \end{tikzcd}
\end{displaymath}
in which the module $P$ can be taken to be $\frac{N \oplus M}{U}$, where
$U=\{(g(k),f(k)):k \in K\}$ and, if we denote by $\overline{(n,m)}$
the corresponding element in $P$ for each $n \in N$ and $m \in M$,
then $\overline{f}(n) = \overline{(n,0)}$ and
$\overline{g}(m) = \overline{(0,-m)}$.

\begin{theorem}\label{p:CharacterizationZieglerPartial}
  Let $R$ be a ring. Let $M$ and $N$ be modules, $K \leq M$ a
  submodule and $f:K \rightarrow N$ a morphism. The following
  assertions are equivalent:
  \begin{enumerate}
  \item $f$ is a partial morphism (resp. isomorphism) from $M$ to $N$
    with $\dom f = K$.

  \item In the pushout diagram
    \begin{displaymath}
      \begin{tikzcd}
        K \arrow[hook]{r}{i} \arrow{d}{f}& M \arrow{d}{\overline f}\\
        N \arrow{r}{\overline i} & P
      \end{tikzcd}
    \end{displaymath}
    $\overline i$ (resp. $\overline i$ and $\overline f$) is a pure
    monomorphism (resp. are pure monomorphisms).
  \end{enumerate}
\end{theorem}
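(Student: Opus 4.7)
The plan is to work directly with the explicit description of the pushout $P=(N\oplus M)/U$, where $U=\{(f(k),i(k)):k\in K\}$, together with the formulas $\overline{i}(n)=\overline{(n,0)}$ and $\overline{f}(m)=\overline{(0,-m)}$. The key identity I will use throughout is that, since $(f(k),i(k))\in U$, one has $\overline{(f(k),0)}=\overline{(0,-i(k))}$ in $P$, i.e. $\overline{i}(f(k))=\overline{f}(i(k))$ for all $k\in K$. Before anything else, I will check that $\overline{i}$ is a monomorphism: if $\overline{(n,0)}=0$ then $(n,0)=(f(k),i(k))$ for some $k$, and $i(k)=0$ forces $k=0$, hence $n=0$.

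For (1)$\Rightarrow$(2), I assume $f$ is partial and show that $\overline{i}$ is a pure monomorphism. Given a system $\sum_{j}X_{j}r_{ji}=\overline{i}(n_{i})$ with solution $\{\overline{(a_{j},b_{j})}\}_{j}$ in $P$, unpacking the coset relations yields, for each $i$, an element $k_{i}\in K$ with $\sum_{j}a_{j}r_{ji}-n_{i}=f(k_{i})$ and $\sum_{j}b_{j}r_{ji}=i(k_{i})$. The second family of equations says that the system $\sum_{j}X_{j}r_{ji}=i(k_{i})$ over $K$ is solvable in $M$ (by $\{b_{j}\}$); by the partial-morphism hypothesis, $\sum_{j}X_{j}r_{ji}=f(k_{i})$ has a solution $\{n_{j}'\}\subseteq N$. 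Then $n_{j}'':=a_{j}-n_{j}'\in N$ satisfies $\sum_{j}n_{j}''r_{ji}=n_{i}$, giving the required solution inside $N$.

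For (2)$\Rightarrow$(1), assume $\overline{i}$ is pure and let $\sum_{j}X_{j}r_{ji}=k_{i}$ be a system over $K$ with solution $\{m_{j}\}$ in $M$. Applying $\overline{f}$ and using $\sum_{j}m_{j}r_{ji}=i(k_{i})$ together with the identity $\overline{f}(i(k_{i}))=\overline{i}(f(k_{i}))$ derived above, I get that $\{\overline{f}(m_{j})\}$ is a solution in $P$ of the system $\sum_{j}X_{j}r_{ji}=\overline{i}(f(k_{i}))$, whose constants lie in $\overline{i}(N)$. Purity of $\overline{i}$ produces a solution in $N$, which, modulo the mono $\overline{i}$, is exactly a solution of $\sum_{j}X_{j}r_{ji}=f(k_{i})$ in $N$. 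This establishes the equivalence for partial morphisms.

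The partial-isomorphism case is handled by symmetry: the extra implication defining a partial isomorphism (solvability over $N$ of the $f$-translated system implies solvability over $M$ of the original) is the mirror image of the partial-morphism condition with the roles of $(i,M)$ and $(f,N)$ swapped, and the pushout $P$ is symmetric in these data up to the sign convention. Running the exact argument of (1)$\Leftrightarrow$(2) with the roles of $\overline{i}$ and $\overline{f}$ interchanged (and using again the identity $\overline{i}(f(k))=\overline{f}(i(k))$) shows that this reverse implication is equivalent to $\overline{f}$ being a pure monomorphism. I expect the main friction in the write-up to be bookkeeping: keeping the sign conventions in the formula for $U$ and for $\overline{f}$ straight, and making sure the system of equations is transported faithfully back and forth between $P$ and the direct sum $N\oplus M$ so that the purity hypothesis is applied to a genuine system with constants in $\overline{i}(N)$ (resp.\ $\overline{f}(M)$).
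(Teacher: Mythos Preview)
Your argument is correct and is essentially the paper's own proof: both work with the explicit pushout $(N\oplus M)/U$ and shuttle systems of linear equations between $P$ and $N$ (resp.\ $M$), and the paper's treatment of the partial-isomorphism case is exactly the written-out version of your symmetry remark. The one point worth making explicit when you invoke symmetry for $(1)\Rightarrow(2)$ is that you need $\overline{f}$ to be a monomorphism, equivalently $f$ to be injective; this does follow from the partial-isomorphism hypothesis (apply it to the degenerate one-equation system $X\cdot 0=k$ to see $\ker f=0$), and the paper uses it tacitly as well.
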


\begin{proof}
  (1) $\Rightarrow$ (2). First assume that $f$ is a partial morphism
  and let us prove that
  $\Img \overline i = \{\overline{(u,0)}:u \in N\}$ is a pure
  submodule of $P$. Let
  \begin{equation}
    \label{eq:1}
    \sum_{i=1}^nX_ir_{ij}=\overline{(s_j,0)} \quad j \in \{1, \ldots, m\}
  \end{equation}
  be a system of linear equations over $\Img \overline i$ which has a
  solution in $P$. Then there exist $u_1, \ldots, u_n \in N$ and
  $v_1, \ldots, v_n\in M$ such that
  $\sum_{i=1}^n \overline{(u_i,v_i)}r_{ij}=\overline{(s_j,0)}$ for
  each $j \in \{1, \ldots, m\}$. Then there exist
  $k_1, \ldots, k_m \in K$ such that
  $\sum_{i=1}^nu_ir_{ij}-s_j = f(k_j)$ and $\sum_{i=1}^nv_ir_{ij}=k_j$
  for each $j \in \{1, \ldots, m\}$. This last equality says that the
  system
  \[\sum_{i=1}^nX_ir_{ij}=k_j \quad j \in \{1, \ldots, m\}\] has a
  solution in $M$ so that, as $f$ is a partial morphism, the system
  \[\sum_{i=1}^nX_ir_{ij}=f(k_j) \quad j \in \{1, \ldots, m\}\] has a
  solution, $u'_1, \ldots, u'_n$, in $N$. Then
  $\overline{(u_1-u'_1,0)}, \ldots, \overline{(u_n-u'_n,0)}$ is a
  solution of (\ref{eq:1}) in $\Img \overline i$. This implies that
  $\Img \overline i$ is a pure submodule of $P$ and
  $\overline i$ is a pure monomorphism.

  Now suppose that $f$ is a partial isomorphism and let us prove that
  $\Img \overline{f} = \{\overline{(0,v)}: v \in M\}$ is a pure
  submodule of $P$. Let
  \begin{equation}
    \label{eq:2}
    \sum_{i=1}^nX_ir_{ij}=\overline{(0,s_j)} \quad j \in \{1, \ldots, m\}
  \end{equation}
  be a system of linear equations over $\Img \overline f$ which has a
  solution in $P$. Then there exist $u_1, \ldots, u_n \in N$ and
  $v_1, \ldots, v_n \in M$ such that
  $\sum_{i=1}^n \overline{(u_i,v_i)}r_{ij}=\overline{(0,s_j)}$ for
  each $j \in \{1, \ldots, m\}$. This implies that there exist
  $k_1, \ldots, k_m \in K$ such that $\sum_{i=1}^nu_ir_{ij} = f(k_j)$
  and $\sum_{i=1}^nv_ir_{ij}-s_j=k_j$ for each
  $j \in \{1, \ldots, m\}$. The first identity says that the system
  \[\sum_{i=1}^nX_ir_{ij}=f(k_j) \quad j \in \{1, \ldots, m\}\]
  has a solution in $N$. Using that $f$ is a partial isomorphism, the
  system \[\sum_{i=1}^nX_ir_{ij}=k_j \quad j \in \{1, \ldots, m\}\]
  has a solution in $M$, say $v'_1, \ldots, v'_n$. Then
  $\overline{(0,v_1-v'_1)}, \ldots, \overline{(0,v_n-v'_n)}$ is a
  solution of (\ref{eq:2}) in $\Img \overline f$. This implies that
  $\Img \overline f$ is a pure submodule of $P$ and
  $\overline f$ is a pure monomorphism.

  (2) $\Rightarrow$ (1). First of all assume that $\overline i$ is a
  pure monomorphism and let
  \[\sum_{i=1}^nX_ir_{ij}=k_j \quad j \in \{1, \ldots, m\}\]
  be a system of linear equations over $K$ which has a solution in $M$. Then
  the system over $\Img \overline i$, 
  \[\sum_{i=1}^nX_ir_{ij}=\overline i f(k_j) \quad j \in \{1, \ldots,
    m\}\] has a solution in $P$ and, using
  that $\overline i$ is pure, it has a solution in $\Img \overline
  i$. Since $\overline i$ is monic, this implies that the system
  \[\sum_{i=1}^nX_ir_{ij}=f(k_j) \quad j \in \{1, \ldots, m\}\]
  has a solution in $N$. Thus, $f$ is a partial morphism.

  Now assume that $\overline f$ is a pure monomorphism too, and let
  \[\sum_{i=1}^nX_ir_{ij}=k_j \quad j \in \{1, \ldots, m\}\]
  be a system of linear equations over $K$ such
  that \[\sum_{i=1}^nX_ir_{ij}=f(k_j) \quad j \in \{1, \ldots, m\}\]
  has a solution in $N$. Then the system
  \[\sum_{i=1}^nX_ir_{ij}=\overline f i( k_j) \quad j \in \{1, \ldots,
    m\}\] has a solution in $P$ and, as $\overline f$ is a pure
  monomorphism, it has a solution in $\Img \overline f$. But, as
  $\overline f$ is monic, this implies that the system
  \[\sum_{i=1}^nX_ir_{ij}=k_j \quad j \in \{1, \ldots, m\}\]
  has a solution in $M$. Thus, $f$ is a partial isomorphism.
\end{proof}

With this characterization we can extend the notion of partial
morphism to any exact category. For the rest of the paper, we fix an
exact category $(\mathcal A;\mathcal E)$ and an additive
exact substructure $\mathcal F$ of $\mathcal E$.

\begin{defn}\label{d:Partial}
  Let $X$ and $Y$ be objects of $\mathcal A$. An $\mathcal F$-partial
  morphism (resp. $\mathcal F$-partial isomorphism) $f$ from $X$ to
  $Y$ is a morphism $f:U \rightarrow Y$, where $U$ is an admissible subobject of $X$ with inclusion $u:U \rightarrow X$, such that in the pushout of $f$
  along $u$,
  \begin{displaymath}
    \begin{tikzcd}
      U \arrow{r}{u} \arrow{d}{f} & X \arrow{d}{\overline{f}}\\
      Y \arrow{r}{\overline u} & P
    \end{tikzcd}
  \end{displaymath}
  $\overline u$ is an $\mathcal F$-inflation (resp. $\overline u$ and
  $\overline f$ are $\mathcal F$-inflations). We shall call the
  subobject $U$ the domain of $f$ and we shall denote it by $\dom f$.
\end{defn}

Sometimes we shall speak about partial morphisms with respect to
$\mathcal F$ instead of $\mathcal F$-partial morphisms. Note that the
definition of $\mathcal F$-partial morphism does not depend on the
selected inclusion $u$ of $U$ since, following the
notation of the definition, if $v:V \rightarrow X$ is an equivalent
monic to $u:U \rightarrow X$ and $w:V \rightarrow U$ is an isomorphism
such that $uw=v$, then $f$ is an $\mathcal F$-partial morphism (resp. isomorphism)
if and only if $fw$ is an $\mathcal F$-partial morphism (resp. isomorphism). We
shall denote by $\dom f$ the subobject $U$ of $X$.

\begin{rem} \rm
In \cite[Definition 28.]{AdamekHerrlichStrecker} another definition of partial morphism is given. For a fixed class $\mathcal M$ of morphisms in a category $\mathcal C$, a $\mathcal M$-partial morphism from $A$ to $B$ is a morphism $f:C \rightarrow B$ defined from an object $C$ for which there exists a morphism $m:C \rightarrow A$ in $\mathcal M$. We would like to emphasize here that this definition has nothing to do with our definition which is inspired by Ziegler partial morphisms.
\end{rem}


Now we obtain some basic properties of partial morphisms:

\begin{prop}\label{p:PropertiesPartialMorphisms}
  Let $X$, $Y$, $Z$ be objects of $\mathcal A$, $U$, an admissible
  subobject of $X$ with inclusion $u:U \rightarrow X$.
  \begin{enumerate}
  \item Suppose that $u$ is an $\mathcal F$-inflation. Then any morphism
    $f:U \rightarrow Y$ is an $\mathcal F$-partial morphism from $X$
    to $Y$ with $\dom f = U$. Moreover, a morphism $f:U \rightarrow Y$
    is an $\mathcal F$-partial isomorphism from $X$ to $Y$ if and only
    if it is an $\mathcal F$-inflation.

  \item If $f:U \rightarrow Y$ is a morphism that has an extension to
    $X$, then $f$ is an $\mathcal F$-partial morphism from $X$ to $Y$
    with $\dom f = U$.

  \item If $f:U \rightarrow Y$ is a morphism, then $f$ defines a
    $\mathcal F$-partial isomorphism from $X$ to $Y$ with $\dom f = U$ if and only
    if $f$ is an inflation, $f$ is an $\mathcal F$-partial morphism
    from $X$ to $Y$ with $\dom f = U$ and $u$ is a
    $\mathcal F$-partial morphism from $Y$ to $X$ with domain the
    subobject $U$ of $Y$ determined by the monomorphism $f$.

  \item Let $f$ be an $\mathcal F$-partial morphism from $X$ to $Y$
    with $\dom f = U$. Then:
    \begin{enumerate}
    \item If there exists $h:Y \rightarrow X$ such that $hf=u$ then
      $f$ is an $\mathcal F$-partial isomorphism.

    \item The converse is true if $X$ is $\mathcal F$-injective.
    \end{enumerate}

  \item Let
    \begin{displaymath}
      \begin{tikzcd}
        \eta: & U \arrow{r}{u} & X \arrow{r}{p} &A
      \end{tikzcd}
    \end{displaymath}
    be a conflation whose kernel is $u$. Then a morphism
    $f:U \rightarrow Y$ defines an $\mathcal F$-partial morphism from
    $X$ to $Y$ with $\dom f=U$ if and only if
    $\Ext(A,f)(\eta) \in \mathcal F$.

  \item If $f$ is an $\mathcal F$-partial morphism (resp.
    $\mathcal F$-partial isomorphism) from $X$ to $Y$ and $g$ is any
    morphism (resp. $\mathcal F$-inflation) from $Y$ to $Z$, then $gf$
    is an $\mathcal F$-partial morphism (resp. $\mathcal F$-partial isomorphism)
    from $X$ to $Z$ with $\dom gf = U$.

  \item If $f$ and $g$ are $\mathcal F$-partial morphisms from $X$ to
    $Y$ with $\dom f = \dom g = U$, then $f+g$ is an $\mathcal F$-partial morphism
    from $X$ to $Y$.

  \item If $f$ is an $\mathcal F$-partial morphism (resp. $\mathcal F$-partial
    isomorphism) from $X$ to $Y$ with $\dom f = U$, and $X$ is an $\mathcal F$-admissible
    subobject of $Z$ with inclusion $v$, then $f$ is an $\mathcal F$-partial morphism
    (resp. $\mathcal F$-partial
    isomorphism) from $Z$ to $Y$ with dominion the subobject $U$ of $Z$ determined by $vu$.
  \end{enumerate}
\end{prop}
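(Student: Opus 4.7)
The main approach is to exploit the definition via pushouts together with the exact-category axioms (E1) and (E2) applied to the substructure $\mathcal F$. Parts (1), (2) and (4) are each short. For (1), since $u$ is an $\mathcal F$-inflation, its pushout $\overline u$ along any $f$ is an $\mathcal F$-inflation by (E2), so every $f\colon U\to Y$ is $\mathcal F$-partial; the easy direction of the \emph{moreover} clause is again (E2) applied to $f$, while the harder direction is handled by the obscure axiom discussed in the last paragraph. Part (2) is one line from Lemma~\ref{l:PushoutCokernel}(1): an extension $g\colon X\to Y$ with $gu=f$ makes $\overline u$ a split monomorphism, and split monos are inflations in every additive exact structure. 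For (4)(a), $hf=u$ together with the universal property of the pushout produces $\varphi\colon P\to X$ with $\varphi\overline f=\mathrm{id}_X$, so $\overline f$ is split mono and hence an $\mathcal F$-inflation; conversely (4)(b), the $\mathcal F$-injectivity of $X$ extends $\mathrm{id}_X$ along the $\mathcal F$-inflation $\overline f$ to some $\varphi\colon P\to X$, and $h:=\varphi\overline u$ satisfies $hf=\varphi\overline u f=\varphi\overline f u=u$.

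Part (5) is immediate from the very construction of $\Ext(A,f)(\eta)$: it is the bottom row of the pushout diagram, whose inflation is exactly $\overline u$, so this conflation belongs to $\mathcal F$ if and only if $\overline u$ is an $\mathcal F$-inflation, which is the definition of $f$ being an $\mathcal F$-partial morphism. Part (7) then follows at once from (5) and additivity of $\Ext(A,-)$ in the second argument: $\Ext(A,f+g)(\eta)$ is the Baer sum of two elements already in $\mathcal F$, and $\mathcal F$-conflations, being closed under direct sum, pullback, and pushout, form a subgroup under Baer sum. Parts (6) and (8) are applications of the pushout pasting lemma: the pushout of the composite ($gf$ along $u$ in (6), $f$ along $vu$ in (8)) decomposes into two successive pushouts, and the pertinent inflation (and, in the isomorphism case, the other pushout arm) is therefore a composition of $\mathcal F$-inflations supplied by (E2) and the hypothesis, hence an $\mathcal F$-inflation by (E1).

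The one genuine obstacle is (3), specifically the implication that a partial isomorphism $f$ must be an ($\mathcal E$-)inflation. From $\overline u f=\overline f u$ with $\overline u$ and $\overline f$ both monic, $f$ is a monomorphism. Lemma~\ref{l:PushoutCokernel}(3) produces a cokernel for $f$: since $\overline f$ is an $\mathcal E$-inflation it has a cokernel $c'\colon P\to C$, and then $c'\overline u$ is a cokernel of $f$. On the other hand, $\overline u\circ f=\overline f\circ u$ is a composite of two $\mathcal E$-inflations, hence an $\mathcal E$-inflation by (E1). The obscure axiom \cite[Proposition~2.16]{Buhler} now applies: a morphism with a cokernel whose postcomposition with some morphism is an inflation is itself an inflation. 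Thus $f$ is an $\mathcal E$-inflation, $U$ is an admissible subobject of $Y$ via $f$, and the two conditions ``$\overline u$ is an $\mathcal F$-inflation'' and ``$\overline f$ is an $\mathcal F$-inflation'' read verbatim as ``$f$ is an $\mathcal F$-partial morphism from $X$ to $Y$'' and ``$u$ is an $\mathcal F$-partial morphism from $Y$ to $X$ with the domain determined by $f$''. The converse direction merely reassembles the definition of partial isomorphism, and the very same obscure-axiom argument, applied inside $\mathcal F$ instead of $\mathcal E$, closes the remaining direction of the \emph{moreover} clause of (1).
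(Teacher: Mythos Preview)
Your proof is correct and follows essentially the same approach as the paper's own proof: both argue via the pushout definition, invoke the exact-category axioms (E1), (E2), and the obscure axiom, use Lemma~\ref{l:PushoutCokernel} for the splitting and cokernel facts, and appeal to (5) plus additivity of $\Ext$ for (7). The only cosmetic difference is that in (2) you observe directly that a split monomorphism is an $\mathcal F$-inflation, whereas the paper routes this through the obscure axiom; your version is marginally cleaner but not a substantively different argument.
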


\begin{proof}
  (1) The pushout along any $\mathcal F$-inflation is an
  $\mathcal F$-inflation so that any morphism $f:U \rightarrow Y$ is
  $\mathcal F$-partial. Moreover, as a consequence of the obscure axiom
  \cite[Proposition 2.16]{Buhler}, $f$ is an $\mathcal F$-partial isomorphism if and
  only if it is an $\mathcal F$-inflation.

  (2) Let $g:X \rightarrow Y$ be an extension of $f$ and consider the
  pushout of $f$ along $u$:
  \begin{displaymath}
    \begin{tikzcd}
      U \arrow{r}{u} \arrow{d}{f} & X \arrow{d}{f_2}\\
      Y \arrow{r}{f_1} & Q
    \end{tikzcd}
  \end{displaymath}
  Since the identity of $Y$ and $g:X\rightarrow Y$ satisfy $1_Yf=gu$,
  there exists $h:Q \rightarrow Y$ such
  that $hf_1=1_Y$ and $hf_2=g$. Since $f_1$ has a cokernel, as it is an
  inflation, the obscure axiom \cite[Proposition 2.16]{Buhler} says
  that $f_1$ is an $\mathcal F$-inflation. Thus, $f$ is
  $\mathcal F$-partial.

  (3) Note that $f$ is an inflation by the obscure axiom
  \cite[Proposition 2.16]{Buhler} and Lemma
  \ref{l:PushoutCokernel}. The rest of the assertion is trivial.

  (4) Consider the pushout of $f$ and $u$
  \begin{equation*}
    \begin{tikzcd}
      U \arrow{r}{u} \arrow{d}{f} & X \arrow{d}{\overline f}\\
      Y \arrow{r}{\overline u} & P\\
    \end{tikzcd}
  \end{equation*}
  If there exists $h:Y \rightarrow X$ with $hf=u$ then, by Lemma
  \ref{l:PushoutCokernel}, $\overline f$ is a split monomorphism and,
  in particular, an $\mathcal F$-inflation. Thus $f$ is an $\mathcal F$-partial
  isomorphism.

  If $X$ is $\mathcal F$-injective, and $f$ is an $\mathcal F$-partial isomorphism
  then $\overline f$ actually is a split monomorphism. Then there
  exists $h:Y \rightarrow X$ with $hf=u$ by Lemma
  \ref{l:PushoutCokernel}.

  (5) Follows from the definition of $\Ext(A,f)$.

  (6) First assume that $f$ is an $\mathcal F$-partial morphism from $X$ to $Y$ with $\dom f = U$. We get the
  following commutative diagram,
  \begin{equation}
    \label{eq:3}
    \begin{tikzcd}
      \dom f \arrow{r}{u} \arrow{d}{f} & X \arrow{d}{\overline f}\\
      Y \arrow{r}{\overline u} \arrow{d}{g} & P
      \arrow{d}{\overline g}\\
      Z \arrow{r}{\overline v} & Q
    \end{tikzcd}
  \end{equation}
  by considering the pushout of $f$ along $u$ and of $g$ along $\overline u$.
  Then the outer diagram is a pushout and $\overline v$ is an
  $\mathcal F$-inflation, as $f$ is $\mathcal F$-partial. This means that $gf$ is a
  $\mathcal F$-partial morphism from $X$ to $Z$ with $\dom gf = \dom f$.
  
  If, in addition, $g$ is an $\mathcal F$-inflation and $f$ is a
  $\mathcal F$-partial isomorphism from $X$ to $Y$, then in diagram (\ref{eq:3})
  both $\overline f$ and $\overline g$ are $\mathcal F$-inflations, so that,
  $\overline g \overline f$ is an $\mathcal F$-inflation too. Consequently, $gf$ is an
  $\mathcal F$-partial isomorphism from $X$ to $Z$.

  (7) Let
  \begin{displaymath}
    \begin{tikzcd}
      \eta: & U \arrow{r}{u} & X \arrow{r}{p} &A
    \end{tikzcd}
  \end{displaymath}
  be a conflation whose kernel is $u$. Then, since $\Ext(A,-)$ defines
  a morphism of abelian groups,
  $\Ext(A,f+g)(\eta) = \Ext(A,f)(\eta)+\Ext(A,g)(\eta)$. Now using that
  $\mathcal F(A,Y)$ is a subgroup of $\Ext(A,Y)$, we deduce that
  $\Ext(A,f+g)(\eta) \in \mathcal F$. By (5), $f+g$ is an $\mathcal F$-partial
  morphism.

  (8) Let $v:\dom f \rightarrow X$ be an $\mathcal F$-inflation. We can construct the
  following commutative diagram
  \begin{displaymath}
    \begin{tikzcd}
      \dom f \arrow{r}{u} \arrow{d}{f} & X \arrow{r}{v}
      \arrow{d}{\overline f} & Z
      \arrow{d}{\overline g}\\
      Y \arrow{r}{\overline u} & P \arrow{r}{\overline v} & Q
    \end{tikzcd}
  \end{displaymath}
  by considering the pushout of $f$ along $u$ and of $\overline f$ along
  $v$. Then the outer diagram is a pushout and both $\overline u$ and
  $\overline v$ are $\mathcal F$-inflations. Consequently
  $\overline v\circ \overline u$ is an $\mathcal F$-inflation which means
  that $f$ is $\mathcal F$-partial from $Z$ to $Y$ with dominion the subobject $U$ of $Z$ determined by $vu$.

  If, in addition, $f$ is an $\mathcal F$-partial isomorphism, both
  $\overline f$ and $\overline g$ are $\mathcal F$-inflations, then $f$ is a
  $\mathcal F$-partial isomorphism from $Z$ to $Y$.
\end{proof}

\begin{expls}\label{e:PartialMorphisms} \rm
  We give below some examples of partial morphisms and partial
  isomorphisms.

  \medskip

  \begin{enumerate}
  \item Let $X$ and $Y$ be objects in $\mathcal A$ and
    $f:X \rightarrow Y$ be a morphism. Then, by Proposition
    \ref{p:PropertiesPartialMorphisms}(1), $f$ is an
    $\mathcal F$-partial morphism from $X$ to $Y$ with $\dom
    f=X$. Moreover, $f$ is an $\mathcal F$-partial isomorphism with
    $\dom f = X$ if and only if it is an $\mathcal F$-inflation.

    \medskip

  \item Let $R$ be a ring. By Proposition
    \ref{p:CharacterizationZieglerPartial}, the partial morphisms with
    respect to the pure-exact structure in the sense of Definition
    \ref{d:Partial} coincide with those introduced by Ziegler
    (Definition \ref{d:PartialZiegler}).

  \end{enumerate}
\end{expls}

\noindent Phantom morphisms, which have their origin in homotopy theory
\cite{MacGibbon}, were introduced by Gnacadja \cite{Gnacadja} in the
category of modules over a finite group ring, and considered by Herzog
for a general module category in \cite{Herzog}. In
\cite{FuGuilHerzogTorrecillas} phantom morphisms
with respect to the exact substructure $\mathcal F$ have been defined, and also the dual notion of phantom morphisms, the cophantom morphisms have been introduced. A morphism
$f:B \rightarrow Y$ is called $\mathcal F$-cophantom if the pushout of
any conflation (beginning in $B$) along $f$ gives a conflation that belongs to $\mathcal F$
(equivalently, if $\Ext(A,f)(\eta) \in \mathcal F$ for each conflation
of the form $\eta: B \rightarrow C \rightarrow A$). With the preceding
result, it is easy to characterize $\mathcal F$-cophantom morphisms in
terms of $\mathcal F$-partial morphisms.

\begin{cor}\label{c:Cophantom}
  Let $f:B \rightarrow Y$ be a morphism in $\mathcal A$. Then $f$ is
  an $\mathcal F$-cophantom morphism if and only if for any admissible
  inclusion
  $u:B \rightarrow X$, $f$ is $\mathcal F$-partial morphism from $X$
  to $Y$ with $\dom f = B$.
\end{cor}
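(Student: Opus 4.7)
The plan is to observe that this corollary is essentially a direct reformulation of Proposition~\ref{p:PropertiesPartialMorphisms}(5), which already identifies, for a conflation $\eta\colon U \to X \to A$ with kernel $u$, the condition that $f\colon U \to Y$ be an $\mathcal F$-partial morphism from $X$ to $Y$ with $\dom f = U$ with the condition $\Ext(A,f)(\eta)\in\mathcal F$. The bridge between the two sides of the corollary is the fact that admissible inclusions $u\colon B\to X$ are in bijective correspondence (via taking the cokernel $A$) with conflations of the form $\eta\colon B \to X \to A$.

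For the forward implication, I would assume $f$ is $\mathcal F$-cophantom and pick an arbitrary admissible inclusion $u\colon B\to X$. Choose a deflation $p\colon X\to A$ for which $u$ is the kernel, obtaining a conflation $\eta\colon B\to X\to A$. By the definition of $\mathcal F$-cophantom recalled just before the statement, $\Ext(A,f)(\eta)\in \mathcal F$. Applying Proposition~\ref{p:PropertiesPartialMorphisms}(5) to this conflation and the morphism $f$ yields that $f$ is an $\mathcal F$-partial morphism from $X$ to $Y$ with $\dom f = B$.

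For the converse, I would start from an arbitrary conflation $\eta\colon B\to C\to A$; its kernel $u\colon B\to C$ is by definition an admissible inclusion, so the hypothesis gives that $f$ is an $\mathcal F$-partial morphism from $C$ to $Y$ with $\dom f = B$. Proposition~\ref{p:PropertiesPartialMorphisms}(5) then translates this back into $\Ext(A,f)(\eta)\in \mathcal F$. Since $\eta$ was an arbitrary conflation beginning at $B$, $f$ is $\mathcal F$-cophantom.

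There is essentially no obstacle here; the only point worth checking carefully is that the notion of $\mathcal F$-partial morphism stated in Definition~\ref{d:Partial} (in terms of the pushout square and $\overline u$ being an $\mathcal F$-inflation) matches the hypothesis of Proposition~\ref{p:PropertiesPartialMorphisms}(5) in the right variable, i.e.\ that the conflation whose third term witnesses $\Ext(A,f)(\eta)$ is precisely the bottom row of the pushout in Definition~\ref{d:Partial}. This is immediate from the construction of $\Ext(A,f)$ via pushout recalled in Section~1, so the argument is genuinely a two-line corollary of Proposition~\ref{p:PropertiesPartialMorphisms}(5).
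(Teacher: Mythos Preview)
Your proposal is correct and matches the paper's approach: the paper gives no explicit proof, simply noting that ``with the preceding result, it is easy to characterize $\mathcal F$-cophantom morphisms in terms of $\mathcal F$-partial morphisms,'' and your argument spells out exactly this application of Proposition~\ref{p:PropertiesPartialMorphisms}(5) together with the correspondence between admissible inclusions of $B$ and conflations starting at $B$.
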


\noindent In \cite{Ziegler} (see \cite[Theorem 1.1]{Monari} too) Ziegler
characterized pure-injective modules in terms of partial morphisms
with respect to the pure-exact structure. We proceed to extend this
result to injective objects relative to the exact structure
$\mathcal F$.

\begin{theorem}\label{t:FInjectivePartial}
  An object $E$ is $\mathcal F$-injective if and only if any
  $\mathcal F$-partial morphism $f$ from an object $X$ to $E$ extends
  to a morphism $g:X \rightarrow E$.
\end{theorem}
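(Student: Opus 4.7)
The plan is to prove both implications directly using the pushout characterization of $\mathcal F$-partial morphisms together with Proposition \ref{p:PropertiesPartialMorphisms}(1).

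For the forward implication, suppose $E$ is $\mathcal F$-injective and let $f:U \rightarrow E$ be an $\mathcal F$-partial morphism from $X$ to $E$ with inclusion $u:U \rightarrow X$. Form the pushout
\begin{displaymath}
\begin{tikzcd}
U \arrow{r}{u} \arrow{d}{f} & X \arrow{d}{\overline{f}}\\
E \arrow{r}{\overline u} & P
\end{tikzcd}
\end{displaymath}
By definition of $\mathcal F$-partial, $\overline u$ is an $\mathcal F$-inflation. Since $E$ is $\mathcal F$-injective, I can lift the identity $1_E:E \rightarrow E$ through $\overline u$, obtaining $h:P \rightarrow E$ with $h\overline u = 1_E$. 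Then $g := h\overline f : X \rightarrow E$ satisfies $gu = h\overline f u = h \overline u f = f$, which is the required extension.

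For the converse, assume every $\mathcal F$-partial morphism into $E$ extends to its ambient object. Let $u:U \rightarrow X$ be an arbitrary $\mathcal F$-inflation and let $f:U \rightarrow E$ be any morphism. By Proposition \ref{p:PropertiesPartialMorphisms}(1), the fact that $u$ is itself an $\mathcal F$-inflation guarantees that $f$ is automatically an $\mathcal F$-partial morphism from $X$ to $E$ with $\dom f = U$. The hypothesis therefore furnishes an extension $g:X \rightarrow E$ of $f$ along $u$. Since $u$ was an arbitrary $\mathcal F$-inflation and $f$ an arbitrary morphism from its source to $E$, this shows $E$ is $\mathcal F$-injective.

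Neither direction presents a genuine obstacle: the substantive content has already been packaged into Proposition \ref{p:PropertiesPartialMorphisms}(1) and the definition of $\mathcal F$-partial morphism via pushouts. The only mild subtlety is recognising that in the forward direction one applies $\mathcal F$-injectivity to $\overline u$ (which is an $\mathcal F$-inflation precisely because $f$ is $\mathcal F$-partial), not to the original inclusion $u$, and then uses the universal property implicitly through the commutativity $h\overline f u = h\overline u f$.
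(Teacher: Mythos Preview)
Your proof is correct and follows essentially the same approach as the paper's own argument: both directions use the pushout square, with the forward direction splitting $\overline u$ via $\mathcal F$-injectivity and the converse invoking Proposition~\ref{p:PropertiesPartialMorphisms}(1). Your write-up is in fact slightly more explicit in verifying $gu = h\overline f u = h\overline u f = f$, whereas the paper simply asserts that $w\overline f$ extends $f$.
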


\begin{proof}
  If $E$ is $\mathcal F$-injective and $f$ is an $\mathcal F$-partial
  morphism from an object $X$ to $E$, we can consider the following pushout
  \begin{displaymath}
    \begin{tikzcd}
      \dom f \arrow{r}{v} \arrow{d}{f} & X \arrow{d}{\overline f}\\
      E \arrow{r}{\overline v} & P
    \end{tikzcd}
  \end{displaymath}
  Since $E$ is $\mathcal F$-injective and $\overline v$ is
  an $\mathcal F$-inflation, there exists $w:P \rightarrow E$ with $w \overline v=1_E$. Then
  $w\overline f$ is an extension of $f$ to $X$.

  Conversely, if $v:V \rightarrow X$ is an $\mathcal F$-inflation and
  $f:V \rightarrow E$ is any morphism then, by Proposition
  \ref{p:PropertiesPartialMorphisms}, $f$ is an $\mathcal F$-partial morphism from
  $X$ to $E$. By hypothesis there exists $w:X \rightarrow E$ such that
  $wv=f$. Then $E$ is $\mathcal F$-injective.
\end{proof}

As an application of the preceding theorem we can characterize when a
module belongs to the right-hand class of a cotorsion pair.

\begin{cor}\label{c:CotorsionPair}
  Let $(\mathcal B,\mathcal C)$ be a complete cotorsion pair and $A$, an object of $\mathcal A$. Then the following assertions are
  equivalent:
  \begin{enumerate}
  \item $A \in \mathcal C$.

  \item $A$ is $\mathcal E^{\mathcal C}$-injective.

  \item Any $\mathcal E^{\mathcal C}$-partial morphism from an object
    $X$ to $A$ extends to a homomorphism from $X$ to $A$.
  \end{enumerate}
\end{cor}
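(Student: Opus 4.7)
The plan is to handle (2) $\Leftrightarrow$ (3) as an immediate consequence of Theorem \ref{t:FInjectivePartial} applied to the exact substructure $\mathcal F = \mathcal E^{\mathcal C}$, and then to establish (1) $\Leftrightarrow$ (2), with the direction (2) $\Rightarrow$ (1) being the one that really uses the completeness of the cotorsion pair.

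For (1) $\Rightarrow$ (2), I would argue directly from the definition of $\mathcal E^{\mathcal C}$. If $A \in \mathcal C$ and $u\colon K \rightarrow L$ is an $\mathcal E^{\mathcal C}$-inflation, there is a conflation $K \rightarrow L \rightarrow N$ in $\mathcal E^{\mathcal C}$. By definition this conflation is $\Hom(-,A)$-exact, so $\Hom(L,A) \rightarrow \Hom(K,A)$ is surjective; that is, every morphism $f\colon K \rightarrow A$ extends along $u$. Hence $A$ is $\mathcal E^{\mathcal C}$-injective.

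The substance is (2) $\Rightarrow$ (1). Starting from $A$, completeness of the cotorsion pair produces a conflation
\[
\eta\colon A \rightarrow C_1 \rightarrow B_1
\]
with $C_1 \in \mathcal C$ and $B_1 \in \mathcal B$. The key observation I would establish next is that $\eta$ lies in $\mathcal E^{\mathcal C}$. Given $X \in \mathcal C$ and $f\colon A \rightarrow X$, form the pushout of $\eta$ along $f$ to obtain a conflation $X \rightarrow P \rightarrow B_1$; since $B_1 \in \mathcal B$, $X \in \mathcal C$ and $\mathcal C = \mathcal B^{\perp}$, we have $\Ext(B_1,X) = 0$, so this pushout conflation splits. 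By Lemma \ref{l:PushoutCokernel}(1) the splitting yields a morphism $C_1 \rightarrow X$ extending $f$, proving that $\eta$ is $\Hom(-,X)$-exact for every $X \in \mathcal C$.

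Once $\eta \in \mathcal E^{\mathcal C}$ is known, the inflation $A \rightarrow C_1$ is an $\mathcal E^{\mathcal C}$-inflation, so the $\mathcal E^{\mathcal C}$-injectivity of $A$ extends $1_A$ along it, producing a retraction $C_1 \rightarrow A$. Thus $A$ is a direct summand of $C_1 \in \mathcal C$, and because $\mathcal C = \mathcal B^{\perp}$ is closed under direct summands (a summand of $\Ext(B,C_1) = 0$ is $0$), we conclude $A \in \mathcal C$. The one nontrivial step is verifying that $\eta \in \mathcal E^{\mathcal C}$; the rest is a direct application of the definitions together with Theorem \ref{t:FInjectivePartial}.
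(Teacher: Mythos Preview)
Your proof is correct and follows essentially the same route as the paper: the equivalence (2) $\Leftrightarrow$ (3) is Theorem~\ref{t:FInjectivePartial}, (1) $\Rightarrow$ (2) is immediate from the definition of $\mathcal E^{\mathcal C}$, and for (2) $\Rightarrow$ (1) one uses completeness to obtain a conflation $A \rightarrow C_1 \rightarrow B_1$ with $C_1 \in \mathcal C$, $B_1 \in \mathcal B$, checks it lies in $\mathcal E^{\mathcal C}$, and then splits it using the $\mathcal E^{\mathcal C}$-injectivity of $A$. The only cosmetic difference is that the paper phrases the verification that this conflation is in $\mathcal E^{\mathcal C}$ via the long exact sequence in $\Ext(-,C')$, whereas you unpack the same vanishing $\Ext(B_1,X)=0$ through an explicit pushout and Lemma~\ref{l:PushoutCokernel}(1); the content is identical.
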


\begin{proof}
  (1) $\Rightarrow$ (2) is trivial. (2) $\Leftrightarrow$ (3) follows
  from Theorem \ref{t:FInjectivePartial}.

  (2) $\Rightarrow$ (1). Since the cotorsion pair is complete, there exists a conflation $A \rightarrow B \rightarrow C$ with $C \in \mathcal C$ and
  $B \in \mathcal B$. Then, the long exact sequence induced by this conflation when applying $\Ext(-,C')$ for each $C' \in \mathcal C$, gives that $f$ actually is an
  $\mathcal E^{\mathcal C}$-inflation. Since $A$ is $\mathcal E^{\mathcal C}$-injective, this inflation is a split monomorphism and $A$ is isomorphic to a direct summand of $C$. Now, using that $\mathcal C$ is closed under direct summands, we conclude that $A$ belongs to $\mathcal C$.
\end{proof}

We end this section characterizing partial morphisms relative to the
exact structures $\mathcal E^{\mathcal X}$ and
$\mathcal E_{\mathcal X}$ for a given class of objects $\mathcal X$.
Using the preceding theorem, it is easy to handle the case
$\mathcal E^{\mathcal X}$.

\begin{prop}
  Let $\mathcal X$ be a class of objects, $X$ an object in $\mathcal A$, $U$ an admissible suboject
  with inclusion $u:U \rightarrow X$ and $f:U \rightarrow Y$ be a
  morphism. The following assertions are equivalent:
  \begin{enumerate}
  \item $f$ is an $\mathcal E^{\mathcal X}$-partial morphism from $X$
    to $Y$ with $\dom f = U$.

  \item For each morphism $g:Y \rightarrow Z$ with $Z \in \mathcal X$,
    there exists $h:X \rightarrow Z$ with $hu=gf$.
  \end{enumerate}
\end{prop}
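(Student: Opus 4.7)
The plan is to work directly with the pushout square
\begin{displaymath}
\begin{tikzcd}
U \arrow{r}{u} \arrow{d}{f} & X \arrow{d}{\overline{f}}\\
Y \arrow{r}{\overline{u}} & P
\end{tikzcd}
\end{displaymath}
and use Lemma \ref{l:PushoutCokernel} to identify the cokernel of $\overline{u}$ with that of $u$; calling this cokernel $p \colon P \to A$, the conflation $\eta' \colon Y \to P \to A$ lies in $\mathcal{E}$ because pushouts of inflations are inflations. The statement that $f$ is an $\mathcal{E}^{\mathcal{X}}$-partial morphism then amounts to saying that $\eta' \in \mathcal{E}^{\mathcal{X}}$, i.e.\ that for every $Z \in \mathcal{X}$ the sequence $\Hom(A,Z) \to \Hom(P,Z) \to \Hom(Y,Z)$ is right exact. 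Since the left exactness is automatic from $\eta'$ being a conflation in $\mathcal{E}$, the whole problem reduces to the lifting property: every $g \colon Y \to Z$ factors through $\overline{u}$.

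For $(1) \Rightarrow (2)$, I would take $g \colon Y \to Z$ with $Z \in \mathcal{X}$; since $\overline{u}$ is an $\mathcal{E}^{\mathcal{X}}$-inflation, there is $k \colon P \to Z$ with $k\overline{u}=g$, and then $h \mathrel{:=} k\overline{f}\colon X \to Z$ satisfies $hu = k\overline{f}u = k\overline{u}f = gf$, as required.

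For $(2) \Rightarrow (1)$, I would first note, as explained above, that $\overline{u}$ is an inflation in $\mathcal{E}$ and $\eta'\colon Y\to P \to A$ is a conflation in $\mathcal{E}$. It then suffices to show that $\Hom(-,Z)$ sends $\eta'$ to a short exact sequence for every $Z\in\mathcal{X}$, which in turn amounts to showing that every $g \colon Y \to Z$ extends along $\overline{u}$. Given such a $g$, hypothesis (2) supplies $h \colon X \to Z$ with $hu = gf$; by the universal property of the pushout applied to the compatible pair $(g,h)$, there exists a unique $k \colon P \to Z$ with $k\overline{u}=g$ and $k\overline{f}=h$, giving the needed lift. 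Hence $\eta' \in \mathcal{E}^{\mathcal{X}}$, so $\overline{u}$ is an $\mathcal{E}^{\mathcal{X}}$-inflation, i.e.\ $f$ is $\mathcal{E}^{\mathcal{X}}$-partial.

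There is no serious obstacle: the only delicate point is checking that the cokernel of $\overline{u}$ really is (isomorphic to) the cokernel of $u$, which is precisely part (2) of Lemma \ref{l:PushoutCokernel}, and that the pushout universal property produces the desired $k$ in the converse direction.
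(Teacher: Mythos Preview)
Your proof is correct. The $(2)\Rightarrow(1)$ direction is essentially identical to the paper's argument: both use the universal property of the pushout to produce the extension $k\colon P\to Z$ and conclude that $\overline{u}$ is an $\mathcal{E}^{\mathcal X}$-inflation.

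For $(1)\Rightarrow(2)$ you take a slightly different, more direct route. The paper invokes Proposition~\ref{p:PropertiesPartialMorphisms}(6) to see that $gf$ is again an $\mathcal{E}^{\mathcal X}$-partial morphism, and then applies Theorem~\ref{t:FInjectivePartial} (using that every $Z\in\mathcal X$ is $\mathcal{E}^{\mathcal X}$-injective) to extend $gf$ to $X$. You instead unwind the definition of $\mathcal{E}^{\mathcal X}$-inflation on the spot: since $\overline{u}$ is an $\mathcal{E}^{\mathcal X}$-inflation and $Z\in\mathcal X$, the map $\Hom(P,Z)\to\Hom(Y,Z)$ is surjective, giving $k$ and hence $h=k\overline f$. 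Your argument avoids appealing to the earlier machinery and is self-contained from the definitions; the paper's version, on the other hand, illustrates how the general results on partial morphisms and $\mathcal F$-injectives specialize to this situation. Both are equally valid, and in fact your $(1)\Rightarrow(2)$ step is exactly what lies underneath the paper's citations once one unfolds the proofs of Proposition~\ref{p:PropertiesPartialMorphisms}(6) and Theorem~\ref{t:FInjectivePartial}.
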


\begin{proof}
  (1) $\Rightarrow$ (2). Take any $Z \in \mathcal X$ and
  $g:Y \rightarrow Z$. By Proposition
  \ref{p:PropertiesPartialMorphisms}(6), $gf$ is a $\mathcal
  E^{\mathcal X}$-partial morphism
  from $X$ to $Z$. Since $Z$ is $\mathcal E^{\mathcal X}$-injective,
  (2) follows from Theorem \ref{t:FInjectivePartial}.

  (2) $\Rightarrow$ (1). Conversely, consider the pushout of $f$ along
  $u$ and a morphism $g:Y \rightarrow Z$ with $Z \in X$:
  \begin{displaymath}
    \begin{tikzcd}
      U \arrow{r}{u} \arrow{d}{f} & X \arrow{d}{\overline f}\\
      Y \arrow{r}{\overline u} \arrow{d}{g} & P\\
      Z &
    \end{tikzcd}
  \end{displaymath}
  By (2) there exists $h:X \rightarrow Z$ such that $hu=gf$. Using
  that $P$ is the pushout, there exists $h':P \rightarrow Z$ such that
  $h'\overline u=g$. This means that $\Hom(P,Z) \rightarrow \Hom(Y,Z)$ is exact and, consequently, $\overline u$ is an
  $\mathcal E^{\mathcal X}$-inflation. Then, $f$ is an
  $\mathcal E^{\mathcal X}$-partial morphism.
\end{proof}

Now we treat the case $\mathcal E_{\mathcal X}$. Having in mind the
interpretation of systems of equations in terms of morphisms (see
\cite[34.3]{Wisbauer}), the following characterization of
$\mathcal E_{\mathcal X}$-partial morphisms can be viewed as an
extension of the definition of Ziegler of partial morphisms in the
pure-exact structure in the module category (Definition
\ref{d:PartialZiegler}).

\begin{prop}
  Suppose that there exist enough projective objects. Let $\mathcal X$
  be a class of objects, $A$ an object, $U$ an admissible subobject of
  $A$ with inclusion $u:U \rightarrow A$ and $f:U \rightarrow B$ a
  morphism. The following assertions are equivalent:
  \begin{enumerate}
  \item $f$ is an $\mathcal E_{\mathcal X}$-partial morphism.
    
  \item For each commutative diagram
    \begin{displaymath}
      \begin{tikzcd}
        M \arrow{r}{i} \arrow{d}{\varphi_1} & N \arrow{d}{\varphi_2}\\
        U \arrow{r}{u} & A
      \end{tikzcd}
    \end{displaymath}
    in which $\Coker i \in \mathcal X$, there exists
    $g:N \rightarrow B$ such that $g i = f \varphi_1$.
  \end{enumerate}
\end{prop}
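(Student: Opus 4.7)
The plan is to use the pushout of $f$ along $u$, together with the characterization of $\mathcal E_{\mathcal X}$-conflations via test diagrams given by the preceding lemma. Let
\begin{displaymath}
\begin{tikzcd}
U \arrow{r}{u} \arrow{d}{f} & A \arrow{d}{\overline f}\\
B \arrow{r}{\overline u} & P
\end{tikzcd}
\end{displaymath}
be the pushout, and let $p: A \to C$ be a cokernel of $u$. By Lemma \ref{l:PushoutCokernel}(2) there is an induced morphism $p':P\to C$ with $p'\overline f = p$ and $p'\overline u = 0$ which is a cokernel of $\overline u$, so we have a distinguished conflation $B \xrightarrow{\overline u} P \xrightarrow{p'} C$.

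\textbf{(1) $\Rightarrow$ (2).} Assertion (1) means precisely that this conflation belongs to $\mathcal E_{\mathcal X}$. Given the diagram in (2), compose to obtain a commutative square
\begin{displaymath}
\begin{tikzcd}
M \arrow{r}{i} \arrow{d}{f\varphi_1} & N \arrow{d}{\overline f \varphi_2}\\
B \arrow{r}{\overline u} & P
\end{tikzcd}
\end{displaymath}
(commutativity follows from $\overline f u = \overline u f$ and $u\varphi_1 = \varphi_2 i$). Since $\Coker i \in \mathcal X$, part (1) of the preceding lemma yields $g: N \to B$ with $gi = f\varphi_1$.

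\textbf{(2) $\Rightarrow$ (1).} I need to show $B \xrightarrow{\overline u} P \xrightarrow{p'} C$ lies in $\mathcal E_{\mathcal X}$, i.e.\ for each $X \in \mathcal X$, every $\psi: X \to C$ lifts along $p'$. By enough projectives, choose a conflation $K \xrightarrow{j} Q \xrightarrow{q} X$ with $Q$ projective. Lift $\psi q$ along the deflation $p$ via projectivity of $Q$ to $\varphi: Q \to A$ with $p\varphi = \psi q$. Since $p\varphi j = 0$, the map $\varphi j$ factors through $\ker p = U$, say $\varphi j = u\alpha$ for some $\alpha: K \to U$. Noting that $\Coker j = X \in \mathcal X$, hypothesis (2) applied to the square formed by $\alpha, \varphi, j, u$ produces $g: Q \to B$ with $gj = f\alpha$. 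Define $h := \overline f \varphi - \overline u g: Q \to P$; then $hj = \overline f u \alpha - \overline u f \alpha = 0$ by the pushout relation, so $h$ factors as $\tilde\psi q$ for some $\tilde\psi: X \to P$. Finally $p'\tilde\psi q = p'\overline f \varphi - p'\overline u g = p\varphi = \psi q$, and since $q$ is an epimorphism, $p'\tilde\psi = \psi$, as required.

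\textbf{Main obstacle.} The forward direction is essentially a repackaging of part (1) of the preceding lemma. The work lies in the reverse direction, where the purely external condition (2) must be used to produce an actual lift of $\psi: X \to C$ to $P$. The trick is to combine the projective presentation of $X$ with the pushout relations $p'\overline f = p$ and $p'\overline u = 0$, and in particular to subtract $\overline u g$ from $\overline f \varphi$ so that the resulting map vanishes on $j$ and therefore factors through $q$. Once the lift is built, verifying that it projects to $\psi$ is immediate from the pushout relations.
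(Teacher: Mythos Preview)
Your proof is correct and follows essentially the same approach as the paper: form the pushout of $f$ along $u$, and in the reverse direction use a projective presentation of $X$ to reduce to hypothesis (2). The only cosmetic differences are that in (1)$\Rightarrow$(2) you invoke the packaged characterization of $\mathcal E_{\mathcal X}$-conflations while the paper applies Lemma~\ref{l:DiagramLemma} directly, and in (2)$\Rightarrow$(1) you explicitly build the lift $\tilde\psi$ via the difference $\overline f\varphi-\overline u g$ whereas the paper again appeals to Lemma~\ref{l:DiagramLemma} as a black box.
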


\begin{proof}
  (1) $\Rightarrow$ (2) Consider a diagram as in (2) and consider the
  pushout of $f$ along $u$ to get the following commutative diagram
  \begin{displaymath}
    \begin{tikzcd}
      & M \arrow{r}{i} \arrow{d}{\varphi_1} & N \arrow{r}{p}
      \arrow{d}{\varphi_2} & \Coker i \arrow{d}{\varphi_3}\\
      & U \arrow{r}{u} \arrow{d}{f} & A \arrow{r}{q} \arrow{d}{\overline
        f} & C \arrow[equal]{d}\\
      \eta: & B \arrow{r}{\overline u} & P \arrow{r}{\overline q} & C
    \end{tikzcd}
  \end{displaymath}
  in which, since $f$ is $\mathcal E_{\mathcal X}$-partial, the bottom
  row is an $\mathcal E_{\mathcal X}$-conflation, $q=\overline{qf}$ is
  a cokernel of $u$ by Lemma \ref{l:PushoutCokernel}, and $\varphi_3$
  exists by the property of the cokernel. Since
  $\Coker i \in \mathcal X$ and $\eta \in \mathcal E_{\mathcal X}$, there exists $h:\Coker i \rightarrow P$
  such that $h\overline q=\varphi_3$. By Lemma \ref{l:DiagramLemma}, there
  exists $g:N \rightarrow B$ such that $gi=f\varphi_1$.

  (2) $\Rightarrow$ (1) The pushout of $f$ along $u$ gives the commutative diagram
  \begin{displaymath}
    \begin{tikzcd}
      & U \arrow{r}{u} \arrow{d}{f} & A \arrow{r}{q}
      \arrow{d}{\overline
        f} & C \arrow[equal]{d}\\
      \eta: & B \arrow{r}{\overline u} & P \arrow{r}{\overline q} & C
    \end{tikzcd}
  \end{displaymath}
  in which $q$ is a cokernel of $u$ by Lemma
  \ref{l:PushoutCokernel}. In order to see that $\eta$ is an
  $\mathcal E_{\mathcal X}$-conflation, let $\varphi:X \rightarrow C$
  be a morphism with $X \in \mathcal X$. Since there exist enough
  projective objects, we can find a conflation
  \begin{displaymath}
    \begin{tikzcd}
      K \arrow{r}{i} & Q \arrow{r}{p} & X
    \end{tikzcd}
  \end{displaymath}
  with $Q$ being projective. Using the projectivity of $Q$, we can construct
  the commutative diagram
  \begin{displaymath}
    \begin{tikzcd}
      K \arrow{r}{i} \arrow{d}{\varphi_1} & Q \arrow{r}{p}
      \arrow{d}{\varphi_2} & X \arrow{d}{\varphi}\\
      U \arrow{r}{u} \arrow{d}{f} & A \arrow{r}{q} \arrow{d}{\overline
        f} & C \arrow[equal]{d}\\
      B \arrow{r}{\overline u} & P \arrow{r}{\overline q} & C
    \end{tikzcd}
  \end{displaymath}
  By hypothesis, there exists $g:Q \rightarrow B$ such that
  $gi=f\varphi_1$. By Lemma \ref{l:DiagramLemma}, there exists
  $h:X \rightarrow P$ such that $\overline q h =\varphi$. Thus, $\eta$
  is an $\mathcal E_{\mathcal X}$-conflation.
\end{proof}

\bigskip

\section{Small Subobjects, Hulls and Envelopes}
\label{sec:small-subobj-envel}

\bigskip

\noindent Approximations by a fixed class of objects are formalized by the notions of
preenvelope and precover. Recall that if $\mathcal B$ is a category,
$\mathcal X$ is a class of objects and $B$ is an object of $\mathcal B$, an
\textit{$\mathcal X$-preenvelope} of $B$ is a morphism
$u:B \rightarrow X$, with $X$ being an object in $\mathcal X$, such that any
morphism $f:B \rightarrow Y$ with $Y \in \mathcal X$ factors through
$u$. Note that if $\mathcal B$ is the module category over a ring $R$,
then an
$\Inj$-preenvelope is just a monomorphism $B \rightarrow I$ with
$I$ injective and a $\PInj$-preenvelope is a pure monomorphism $B
\rightarrow E$ with $E$ pure-injective.

There are two ways of defining a minimal approximation in module
categories. The first of
them, which can be defined in any category, is the notion of envelope: an $\mathcal X$-preenvelope $u:B
\rightarrow X$ is an $\mathcal X$-envelope if $u$ is a minimal
morphism in the sense that any morphism $f:X \rightarrow X$
satisfying $fu=u$ is an isomorphism.

The second of them uses the notion of essential and pure-essential
monomorphism. Recall that a monomorphism (resp. a pure monomorphism) $f:A \rightarrow B$ is
essential (resp. pure-essential) if for any $g:B \rightarrow C$ such
that $gf$ is a monomorphism (resp. a pure monomorphism), then $g$ is a
monomorphism (resp. pure monomorphism). Then an injective hull in
$\ModR$ is an essential monomorphism $u:B \rightarrow I$ with $I$
injective, and a pure-injective hull is a pure-essential pure
monomorphism $v:B \rightarrow E$ with $E$ pure-injective (we shall use
the term \textit{hull} for minimal approximations defined by
essentiality). It is well known that $u$ is precisely the injective
envelope of $B$ and $v$ the pure-injective envelope of $v$ (as defined
in the preceding paragraph).

Concerning the pure-exact structure, there is another notion of small
extension which was introduced by Ziegler in
\cite[p. 161]{Ziegler} using partial morphisms. With this definition Ziegler constructs, for a
submodule $A$ of a pure-injective module $E$, a weak version of the
pure-injective hull of $A$, $A \leq H(A) \leq E$ (see \cite[Theorem
3.6]{Ziegler}) which gives, in case $A$ is a pure submodule of $E$,
the pure-injective hull of $A$.

The objective of this section is to define $\mathcal F$-essential and
$\mathcal F$-small extensions in our exact category
$(\mathcal A;\mathcal E)$, and to relate all approximations of objects by injectives:
$\mathcal F$-injective envelopes, $\mathcal F$-injective hulls and
$\mathcal F$-small extensions.

We shall start with the definition of $\mathcal F$-small
extension. Note that if $X$ and $Y$ are objects in $\mathcal A$, $f$
is an $\mathcal F$-partial morphism from $X$ to $Y$ and $V$ is an admissible
subobject of $\dom f$, then $f\upharpoonright V$ defines an $\mathcal F$-partial
morphism from $X$ to $Y$ (with $\dom f\upharpoonright V = V$).

\begin{defn}\label{d:small}
  Let $X$ be an object and $U \leq V$ be admissible subobjects of $X$.
  \begin{enumerate}
  \item We shall say that $V$ is $\mathcal F$-small over $U$ in $X$ if
    for any $\mathcal F$-partial morphism $f$ from $X$ to another
    object $Y$ with $\dom f = V$, the following holds:
    \begin{center}
      $f \upharpoonright U$ is an $\mathcal F$-partial isomorphism
      from $X$ to $Y \Rightarrow f$ is an $\mathcal F$-partial
      isomorphism.
    \end{center}
  \item We shall say that $X$ is $\mathcal F$-small over $U$ if $X$ is
    $\mathcal F$-small over $U$ in $X$.
  \end{enumerate}
\end{defn}

\noindent If $R$ is a ring, $\mathcal A=\ModR$ and $\mathcal F$ is the
pure-exact structure in $\ModR$, then the $\mathcal F$-small objects
coincide with the small objects introduced by Ziegler in
\cite{Ziegler}.

\noindent As an immediate consequence of the above definition we get:

\begin{lem}\label{l:CharSmall}
  Let $X$ be an object and $U \leq X$ be an admissible subobject. Then
  $X$ is $\mathcal F$-small over $U$ if and only if each morphism
  $f:X \rightarrow Z$ such that $f \upharpoonright U$ defines an
  $\mathcal F$-partial isomorphism from $X$ to $Z$ is actually an
  $\mathcal F$-inflation.
\end{lem}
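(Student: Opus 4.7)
The plan is to recognize that the lemma is essentially a direct unfolding of Definition~\ref{d:small} in the special case $V=X$, combined with Proposition~\ref{p:PropertiesPartialMorphisms}(1). I would proceed in two short steps.

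First, by Definition~\ref{d:small}(2), ``$X$ is $\mathcal{F}$-small over $U$'' means that $X$ is $\mathcal{F}$-small over $U$ in $X$, so by Definition~\ref{d:small}(1) (with $V=X$): for every $\mathcal{F}$-partial morphism $f$ from $X$ to an object $Z$ with $\dom f = X$, if $f\upharpoonright U$ is an $\mathcal{F}$-partial isomorphism from $X$ to $Z$, then $f$ itself is an $\mathcal{F}$-partial isomorphism from $X$ to $Z$.

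Second, I would apply Proposition~\ref{p:PropertiesPartialMorphisms}(1) to the inclusion $1_X\colon X\rightarrow X$. Since $1_X$ is (trivially) an $\mathcal{F}$-inflation, that proposition tells us both (a) every morphism $f\colon X\rightarrow Z$ is automatically an $\mathcal{F}$-partial morphism from $X$ to $Z$ with $\dom f = X$, and (b) such an $f$ is an $\mathcal{F}$-partial isomorphism if and only if $f$ is an $\mathcal{F}$-inflation. Substituting (a) and (b) into the sentence from the first step immediately produces the stated characterization.

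There is essentially no obstacle: the entire content of the lemma is that when we take $V=X$ the ``partial morphism'' quantifier becomes vacuous (by (a)) and the ``partial isomorphism'' conclusion collapses to being an $\mathcal{F}$-inflation (by (b)). Both directions of the equivalence follow by the same substitution, so a single short paragraph suffices as the proof, without requiring any pushout construction beyond what is already encoded in Proposition~\ref{p:PropertiesPartialMorphisms}(1).
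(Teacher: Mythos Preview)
Your proposal is correct and follows essentially the same approach as the paper. The paper's proof cites Example~\ref{e:PartialMorphisms}(1) (which is itself just a restatement of Proposition~\ref{p:PropertiesPartialMorphisms}(1) for the identity inclusion) to obtain exactly the two facts you label (a) and (b), so the arguments are identical in content.
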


\begin{proof}
  Simply note that, by Example \ref{e:PartialMorphisms}, any morphism
  $f:X \rightarrow Z$ is $\mathcal F$-partial with $\dom f = X$ and that $f$ is a
  $\mathcal F$-partial isomorphism with $\dom f = X$ if and only if $f$ is an
  $\mathcal F$-inflation.
\end{proof}

Next, we establish some fundamental properties of $\mathcal F$-small
objects.

\begin{prop}\label{p:PropertiesSmall}
  Let $X$ be an object and $U \subseteq V \subseteq W$ be admissible subobjects
  of $X$. Then:
  \begin{enumerate}
  \item If $V$ is $\mathcal F$-small over $U$ in $X$ and $W$ is
    $\mathcal F$-small over $V$ in $X$ then $W$ is $\mathcal F$-small
    over $U$ in $X$.

    \medskip

  \item If $X$ is $\mathcal F$-injective then $V$ is
    $\mathcal F$-small over $U$ in $X$ if and only if for each $\mathcal F$-partial
    morphism from $X$ to $Y$ with $\dom f = V$ we have that: if the inclusion $u:U \rightarrow X$ factors through
    $f \upharpoonright U$, then the inclusion $v:V \rightarrow X$
    factors through $f$.

    \medskip

  \item If $V$ is an $\mathcal F$-admissible subobject of $X$, then
    $V$ is $\mathcal F$-small over $U$ in $X$ if and only if $V$ is
    $\mathcal F$-small over $U$ (in $V$).
  \end{enumerate}
\end{prop}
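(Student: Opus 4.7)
The plan is to treat the three parts in order. A recurring observation, stated just before Definition 3.1, is that the restriction of an $\mathcal{F}$-partial morphism to an admissible subobject of its domain is again an $\mathcal{F}$-partial morphism; this will be used freely.

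For (1) I would argue by transitivity. Given an $\mathcal{F}$-partial morphism $f : W \rightarrow Y$ from $X$ with $\dom f = W$ such that $f \upharpoonright U$ is an $\mathcal{F}$-partial isomorphism, apply the hypothesis that $V$ is $\mathcal{F}$-small over $U$ in $X$ to $f \upharpoonright V$: its restriction to $U$ is $f \upharpoonright U$, so $f \upharpoonright V$ becomes an $\mathcal{F}$-partial isomorphism. Then the hypothesis that $W$ is $\mathcal{F}$-small over $V$ in $X$, applied to $f$, yields that $f$ is an $\mathcal{F}$-partial isomorphism.

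For (2) the idea is to translate Definition 3.1 using Proposition 2.6(4). When $X$ is $\mathcal{F}$-injective, that proposition says that an $\mathcal{F}$-partial morphism $g$ from $X$ with $\dom g = K$ and inclusion $k : K \rightarrow X$ is an $\mathcal{F}$-partial isomorphism if and only if $k$ factors through $g$. Applying this criterion simultaneously to $g = f \upharpoonright U$ and to $g = f$ transforms the implication in Definition 3.1 into precisely the stated factorization condition.

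For (3) the forward direction is the cleaner one: given $f : V \rightarrow Y$ with $f \upharpoonright U$ an $\mathcal{F}$-partial isomorphism from $V$, Proposition 2.6(8) (using that $V$ is $\mathcal{F}$-admissible in $X$) lifts both $f$ and $f \upharpoonright U$ to the corresponding partial morphism data from $X$; smallness in $X$ then forces $f$ to be an $\mathcal{F}$-partial isomorphism from $X$, hence an $\mathcal{F}$-inflation by Proposition 2.6(3), and Lemma 3.2 finishes. The reverse direction is the main obstacle, since one must convert "$f \upharpoonright U$ is an $\mathcal{F}$-partial isomorphism from $X$" into "from $V$". My plan is to factor the pushout of $f \upharpoonright U$ along the composite inclusion $U \rightarrow V \rightarrow X$ into two successive pushouts, sharing an intermediate pushout object $Q$ along $U \rightarrow V$; the right-hand square is then the pushout of $v : V \rightarrow X$ along the left-hand vertical, whose parallel edge is automatically an $\mathcal{F}$-inflation since $v$ is one. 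The hypothesis gives that the two outer edges of the combined rectangle are $\mathcal{F}$-inflations, and invoking the obscure axiom \cite[Proposition 2.16]{Buhler} in each of the resulting comparisons (using Lemma 1.3 to produce the needed cokernels) lets me peel off the two edges of the left-hand square as $\mathcal{F}$-inflations. Thus $f \upharpoonright U$ becomes an $\mathcal{F}$-partial isomorphism from $V$, smallness in $V$ applies to $f$, and a final use of Proposition 2.6(8) returns the conclusion in $X$.
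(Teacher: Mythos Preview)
Your argument is correct and matches the paper's route for (1), (2), and the forward half of (3).  Two small remarks:

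In the forward direction of (3) you invoke Proposition~\ref{p:PropertiesPartialMorphisms}(3) to conclude that $f$ is an $\mathcal F$-inflation, but (3) only yields that $f$ is an $\mathcal E$-inflation.  What you actually need is Proposition~\ref{p:PropertiesPartialMorphisms}(1): since $V$ is $\mathcal F$-admissible in $X$, an $\mathcal F$-partial isomorphism from $X$ with domain $V$ is exactly an $\mathcal F$-inflation.  This is the citation the paper uses.

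For the reverse direction of (3) the paper simply declares that ``trivially, $f\upharpoonright U$ defines an $\mathcal F$-partial isomorphism from $V$ to $Y$'', whereas you spell out the two-step pushout argument with the obscure axiom.  Your approach is the honest justification of that word ``trivially'' and is correct; note that the cokernel needed to apply the obscure axiom to the edge $V\to Q$ is available because $f\upharpoonright U$, being an $\mathcal F$-partial isomorphism from $X$, is in particular an inflation by Proposition~\ref{p:PropertiesPartialMorphisms}(3), and then Lemma~\ref{l:PushoutCokernel}(2) transfers its cokernel to $V\to Q$.  Your final appeal to Proposition~\ref{p:PropertiesPartialMorphisms}(8) is fine, though again (1) would be the more direct citation.
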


\begin{proof}
  (1) is straightforward. (2) follows from the description of $\mathcal F$-partial isomorphisms defined over $\mathcal F$-injective objects obtained in Proposition \ref{p:PropertiesPartialMorphisms}(4).
  
  \medskip

  (3) First of all assume that $V$ is $\mathcal F$-small over $U$ in
  $X$ and let us use the preceding lemma to prove that $V$ is
  $\mathcal F$-small over $U$. Take any morphism $f:V \rightarrow Y$
  such that $f \upharpoonright U$ is an $\mathcal F$-partial
  isomorphism. Since $V$ is an $\mathcal F$-admissible subobject, $f$
  is an $\mathcal F$-partial morphism from $X$ to $Y$ with
  $\dom f = V$ by Proposition \ref{p:PropertiesPartialMorphisms}(1).
  Since $V$ is small over $U$ in $X$, $f$ is an $\mathcal F$-partial
  isomorphism from $X$ to $Y$ with dominion $V$. Again by Proposition
  \ref{p:PropertiesPartialMorphisms}(1), $f$ is an
  $\mathcal F$-inflation.

  Now assume that $V$ is $\mathcal F$-small over $U$ and let $f$ be an
  $\mathcal F$-partial morphism from $X$ to an object $Y$ with $\dom f = V$ such
  that $f \upharpoonright U$ defines an $\mathcal F$-partial isomorphism from $X$ to
  $Y$. Then, trivially, $f \upharpoonright U$ defines an $\mathcal F$-partial
  isomorphism from $V$ to $Y$ and, since $V$ is $\mathcal F$-small over $U$, $f$ is
  an $\mathcal F$-inflation by Lemma \ref{l:CharSmall}. Since $V$ is
  $\mathcal F$-admissible, $f$ is an $\mathcal F$-partial isomorphism from $X$ to
  $Y$ by Proposition \ref{p:PropertiesPartialMorphisms}(1).
\end{proof}

With the notion of $\mathcal F$-small objects we can define $\mathcal
F$-small extensions.

\begin{defn}
  An $\mathcal F$-small extension is an inflation $f:U \rightarrow X$
  such that $X$ is $\mathcal F$-small over $U$.
\end{defn}

The following characterization follows from the definition of partial
isomorphism with respect to the pure-exact structure.

\begin{prop}
  Let $R$ be a ring. A monomorphism $v:U \rightarrow X$ is a
  pure-small extension if and only if any morphism $g:X \rightarrow Y$
  is a pure monomorphism provided that it satisfies the following:
  \begin{enumerate}
  \item $gu$ is monic.

  \item For each system of linear equations over $U$,
    $\sum_{j =1}^m X_jr_{ij}=u_i \ (i=1, \ldots, n)$, if
    $\sum_{j=1}^mX_jr_{ij}=gv(u_i) \ (i=1, \ldots, n)$ has a solution
    in $Y$, then $\sum_{j =1}^m X_jr_{ij}=u_i\ (i=1, \ldots, n)$ has a
    solution in $X$.
  \end{enumerate}
\end{prop}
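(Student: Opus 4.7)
The plan is to recognize this statement as the specialization of Lemma \ref{l:CharSmall} to the pure-exact structure on $\ModR$, once we interpret the system-of-equations wording in the Ziegler language of partial isomorphisms (Definition \ref{d:PartialZiegler}), which is legitimate in the module setting by Theorem \ref{p:CharacterizationZieglerPartial}. I thus need to show that, for a pure monomorphism $v:U\to X$ and an arbitrary morphism $g:X\to Y$, the conjunction of conditions (1) and (2) of the statement is equivalent to $gv$ being a pure-partial isomorphism from $X$ to $Y$ with $\dom gv=U$.

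Unpacking Definition \ref{d:PartialZiegler}(2), $gv$ is such a pure-partial isomorphism if and only if for every system $\sum_j X_jr_{ij}=u_i$ over $U$, the system is solvable in $X$ if and only if its $gv$-transform $\sum_j X_jr_{ij}=gv(u_i)$ is solvable in $Y$. The ``$X$-solvable $\Rightarrow$ $Y$-solvable'' direction is automatic from the purity of $v$: an $X$-solution of a system whose constants lie in $U$ produces, by purity of $v\colon U\to X$, a $U$-solution of the same system, and applying $gv$ then yields a $Y$-solution of the transformed system. The reverse direction is precisely condition (2). Finally, $gv$ being monic (condition (1)) is already contained in the partial-isomorphism requirement: either by Proposition \ref{p:PropertiesPartialMorphisms}(3), or more concretely by applying (2) to the degenerate system $X\cdot 0 = u-u'$ whenever $gv(u)=gv(u')$, which forces $u=u'$ in $X$, hence in $U$.

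With this translation in hand, both directions of the biconditional follow at once from Lemma \ref{l:CharSmall} applied to $\mathcal F$ the pure-exact structure: the lemma says that $v:U\to X$ is pure-small (i.e.\ $X$ is pure-small over $U$) precisely when every $g:X\to Y$ such that $g\upharpoonright U = gv$ defines a pure-partial isomorphism from $X$ to $Y$ is itself a pure inflation, that is, a pure monomorphism. For the forward implication, given a pure-small $v$ and a $g$ satisfying (1) and (2), the preceding paragraph identifies $gv$ as a pure-partial isomorphism, so the lemma yields that $g$ is pure monic. For the converse, any $g$ for which $gv$ is a pure-partial isomorphism automatically satisfies (1) and (2), so by hypothesis $g$ is pure monic, and the lemma then delivers pure-smallness.

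The main---and essentially only---subtlety is the bookkeeping observation that the ``$X$-solvable $\Rightarrow$ $Y$-solvable'' half of being a partial isomorphism is not listed as a separate third condition in the proposition because it is already guaranteed by $v$ being a pure inflation, which is built into the notion of pure-small extension. Once this is recognized, the entire argument reduces to rewriting Lemma \ref{l:CharSmall} with the matrix-theoretic description of pure-partial isomorphisms, and no further computation is needed.
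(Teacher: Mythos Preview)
Your overall strategy---reduce to Lemma~\ref{l:CharSmall} by identifying conditions (1) and (2) with ``$gv$ is a Ziegler partial isomorphism from $X$ to $Y$''---is exactly the approach the paper has in mind (the paper simply says the result ``follows from the definition of partial isomorphism with respect to the pure-exact structure''). The argument is correct except for one slip.

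You assert that $v$ is a \emph{pure} monomorphism, and you invoke this purity to obtain the ``$X$-solvable $\Rightarrow$ $Y$-solvable'' half of the biconditional. This is not available: by the paper's Definition of an $\mathcal F$-small extension, $v$ is required only to be an inflation in the ambient structure $\mathcal E$, which in $\ModR$ means merely a monomorphism, not a pure one. Fortunately the step does not need purity at all: if $\sum_j x_j r_{ij}=v(u_i)$ holds in $X$, then applying $g$ gives $\sum_j g(x_j) r_{ij}=gv(u_i)$ in $Y$, so the forward direction is automatic for any homomorphism $g$. With this correction your translation goes through verbatim: condition (2) is exactly the backward half of the Ziegler biconditional, the forward half comes for free as just noted, and condition (1) is then a consequence (your degenerate-system argument, or Proposition~\ref{p:PropertiesPartialMorphisms}(3), still applies). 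After that, Lemma~\ref{l:CharSmall} finishes both directions exactly as you wrote.
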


\begin{rem} \rm
  Note that $g:X \rightarrow Y$ is a pure monomorphism if and only if:
  \begin{enumerate}
  \item $g$ is monic.

  \item Each system of linear equations over $X$,
    $\sum_{j =1}^m X_jr_{ij}=x_i\ (i=1, \ldots, n)$, satisfies that if
    the system $\sum_{j =1}^m X_jr_{ij}=g(x_i)\ (i=1, \ldots, n)$ has
    a solution in $Y$, then the system
    $\sum_{j =1}^m X_jr_{ij}=x_i\ (i=1, \ldots, n)$ has a solution in
    $X$.
  \end{enumerate}
  The previous result says that, when $X$ has a submodule $U$ such
  that the extension $U \leq X$ is pure-small, then we only have to
  check the condition on systems of equations over $U$ in order to see
  that a morphism $g:X \rightarrow Y$ is a pure monomorphism.
\end{rem}

Now we define $\mathcal F$-essential extensions and weakly $\mathcal
F$-essential extensions.

\begin{defn}
  A weakly $\mathcal F$-essential extension (resp.
  $\mathcal F$-essential extension) is an $\mathcal F$-inflation
  $u\colon X \rightarrow Y$ such that for any morphism
  $f:Y \rightarrow Z$, the following holds:
  \begin{center}
    $f u$ is an $\mathcal F$-inflation $\Rightarrow$ $f$ is an
    inflation (resp. $f$ is an $\mathcal F$-inflation).
  \end{center}
\end{defn}

\noindent If $\mathcal A = \ModR$ and $\mathcal E$ is the abelian exact
structure, then both
the weakly $\mathcal E$-essential extensions and the $\mathcal E$-essential
extensions coincide, since each monic is an inflation. If we consider
$\mathcal F$ to be the pure-exact structure on $\ModR$, then the weakly
$\mathcal F$-essential
extensions are the pure-essential
extensions introduced in \cite{Warfield}; we shall call them weakly
pure-essential. The $\mathcal F$-essential extensions are the purely essential
monomorphisms introduced in \cite{GomezGuil} (caution: they are called
pure-essential in \cite[p. 45]{Prest09}). We shall use the name pure-essential extension. In \cite[Example 2.3]{GomezGuil} it is
proved that there exist weakly pure-essential extensions which are not
pure-essential.

We establish the relationship between $\mathcal F$-essential
extensions and $\mathcal F$-small extensions in the sense of
Definition \ref{d:small}.

\begin{prop}\label{p:EssentialSmall}
  Let $u\colon X \rightarrow Y$ be an inflation.
  \begin{enumerate}
  \item The following assertions are equivalent:
    \begin{enumerate}
    \item $u$ is an $\mathcal F$-essential extension.

    \item $u$ is an $\mathcal F$-inflation and $Y$ is
      $\mathcal F$-small over $X$.
    \end{enumerate}
  \item If $u$ is a weakly $\mathcal F$-essential extension then $u$ does
    not factor through a proper direct summand of $Y$, that is, if
    $v:Z \rightarrow Y$ is a split monomorphism and
    $w:X \rightarrow Z$ is an inflation such that $vw=u$, then $v$ is
    an isomorphism.
  \end{enumerate}
\end{prop}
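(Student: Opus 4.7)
The plan is to handle the two parts separately, with part (1) being a direct translation between the essentiality language and the smallness language via Lemma~\ref{l:CharSmall} and Proposition~\ref{p:PropertiesPartialMorphisms}(1), and part (2) reducing to a clever application of the weakly essential condition to the retraction of the split monomorphism.

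For part (1)(a)$\Rightarrow$(b), since any $\mathcal F$-essential extension is by definition an $\mathcal F$-inflation, only the $\mathcal F$-smallness of $Y$ over $X$ requires work. I would use Lemma~\ref{l:CharSmall}: take a morphism $f\colon Y\to Z$ such that $f\rest X=fu$ defines an $\mathcal F$-partial isomorphism from $Y$ to $Z$ with domain $X$. Since $u$ is an $\mathcal F$-inflation, Proposition~\ref{p:PropertiesPartialMorphisms}(1) tells us this is equivalent to $fu$ being an $\mathcal F$-inflation. The $\mathcal F$-essential property of $u$ then gives that $f$ itself is an $\mathcal F$-inflation, as required. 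The reverse direction (b)$\Rightarrow$(a) is the exact mirror image: given $f\colon Y\to Z$ with $fu$ an $\mathcal F$-inflation, Proposition~\ref{p:PropertiesPartialMorphisms}(1) converts this into $fu$ being an $\mathcal F$-partial isomorphism from $Y$ to $Z$ with domain $X$, and Lemma~\ref{l:CharSmall} applied to the $\mathcal F$-smallness of $Y$ over $X$ then yields that $f$ is an $\mathcal F$-inflation.

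For part (2), suppose $u=vw$ with $v\colon Z\to Y$ a split monomorphism, say with retraction $p\colon Y\to Z$ satisfying $pv=1_Z$, and $w\colon X\to Z$ an inflation. The main obstacle I foresee is that $w$ is only assumed to be an inflation in $\mathcal E$, whereas applying weak $\mathcal F$-essentiality to $p$ will require knowing that $pu=w$ is an $\mathcal F$-inflation. To bridge this gap I would invoke the obscure axiom (\cite[Proposition 2.16]{Buhler}) applied to the exact structure $\mathcal F$: the morphism $w$ admits a cokernel in $\mathcal A$ (since it is an inflation in $\mathcal E$, and cokernels are categorical), and the composition $vw=u$ is an $\mathcal F$-inflation by hypothesis on $u$, so the obscure axiom forces $w$ itself to be an $\mathcal F$-inflation.

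With this bridging step in place, the conclusion follows quickly. Since $pu=pvw=w$ is an $\mathcal F$-inflation and $u$ is weakly $\mathcal F$-essential, $p$ must be an inflation, hence in particular a monomorphism. Combining the equation $p(vp)=(pv)p=p=p\cdot 1_Y$ with the monicity of $p$ yields $vp=1_Y$, so $v$ and $p$ are mutually inverse and $v$ is an isomorphism, completing the proof.
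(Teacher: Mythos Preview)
Your proposal is correct and follows essentially the same approach as the paper's proof: part (1) is handled via Lemma~\ref{l:CharSmall} and Proposition~\ref{p:PropertiesPartialMorphisms}(1) exactly as you describe, and part (2) uses the obscure axiom to upgrade $w$ to an $\mathcal F$-inflation, then applies weak $\mathcal F$-essentiality to the retraction and cancels on the left. Your justification of the obscure-axiom step (noting that $w$ has a cokernel because it is an $\mathcal E$-inflation, and that $vw=u$ is an $\mathcal F$-inflation) is in fact more explicit than the paper's own phrasing.
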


\begin{proof}
  (1) First of all, suppose that $u$ is an
  $\mathcal F$-essential extension and let us prove that $Y$ is small
  over $X$. We will use Lemma \ref{l:CharSmall}. Let
  $f:Y \rightarrow Z$ be a morphism such that
  $f\upharpoonright X = fu$ defines an $\mathcal F$-partial
  isomorphism from $Y$ to $Z$. Since $X$ is an $\mathcal F$-admissible
  subobject, $f\upharpoonright X$ is actually an
  $\mathcal F$-inflation by Proposition
  \ref{p:PropertiesPartialMorphisms}(1). Since $u$ is an
  $\mathcal F$-essential extension, $f$ is an $\mathcal F$-inflation.
  By Lemma \ref{l:CharSmall}, $Y$ is $\mathcal F$-small over $X$.
  
  Conversely, assume that $u$ is an $\mathcal F$-inflation and $Y$ is
  $\mathcal F$-small over $X$. Let $f:Y \rightarrow Z$ be a morphism
  such that $f \upharpoonright X=fu$ is an $\mathcal
  F$-inflation. Then, by Proposition
  \ref{p:PropertiesPartialMorphisms}(1), $f\upharpoonright X$ defines
  an $\mathcal F$-partial isomorphism from $Y$ to $Z$. Since $Y$ is
  $\mathcal F$-small over $X$, $f$ is an $\mathcal F$-inflation by
  Lemma \ref{l:CharSmall}. Thus, $u$ is an
  $\mathcal F$-essential extension.

  (2) Let $v:Z \rightarrow Y$ be a split monomorphism, $v':Y \rightarrow Z$, a morphism with $v'v=1_Z$ and $w:X \rightarrow Z$ with an inflation with $vw=u$. Since $w$ is
  an inflation, $w$ is an $\mathcal F$-inflation by the obscure
  axiom. Using that $v'u=w$ and that $u$ is weakly $\mathcal F$-essential, we get that $v'$
  is monic. Then $v'vv'=v'=v'1_Y$ from which it follows that $vv'=1_Y$
  and, consequently, $v$ is an isomorphism.
\end{proof}

With the notion of $\mathcal F$-essential extension we can define
$\mathcal F$-injective hulls.

\begin{defn}
  An $\mathcal F$-injective hull of an object $X$ is an
  $\mathcal F$-essential extension $u:X \rightarrow E$ with $E$, an
  $\mathcal F$-injective object.
\end{defn}

In the next result we see that, under certain circumstances, a
weakly $\mathcal F$-essential extension $u:X \rightarrow E$ with $E$ being
$\mathcal F$-injective is actually an $\mathcal F$-injective hull. In
addition, we establish the relationship between $\mathcal F$-injective
hulls and $\mathcal F$-injective envelopes as defined at the beginning
of this section. If $\FInj$ is the class of all
$\mathcal F$-injective objects, we shall call $\FInj$-envelopes to be
$\mathcal F$-injective envelopes.

\begin{theorem}\label{t:InjectiveHulls}
  Let $u:X \rightarrow Y$ be a morphism. The following assertions are
  equivalent:
  \begin{enumerate}
  \item $u$ is an $\mathcal F$-injective hull.

  \item $u$ is an $\mathcal F$-inflation, $Y$ is
    $\mathcal F$-injective and $Y$ is $\mathcal F$-small over $X$.

  \item $u$ is an $\mathcal F$-inflation, $Y$ is
    $\mathcal F$-injective and each morphism $f:Y \rightarrow Z$
    satisfying that $fu$ is an $\mathcal F$-inflation, is a split
    monomorphism.
  \end{enumerate}
  If, in addition, $u$ has a cokernel and there exists an
  $\mathcal F$-inflation $v:X \rightarrow E$ with $E$ being a
  $\mathcal F$-injective object, the following assertion is equivalent
  too:
  \begin{enumerate}
    \setcounter{enumi}{3}
  \item $u$ is an $\mathcal F$-injective envelope.
  \end{enumerate}
  Finally, if there exists an $\mathcal F$-essential extension
  $v:X \rightarrow E$ with $E$ an $\mathcal F$-injective object, the
  following assertion is equivalent too:
  \begin{enumerate}
    \setcounter{enumi}{4}
  \item $u$ is a weakly $\mathcal F$-essential extension with $Y$
    being $\mathcal F$-injective.
  \end{enumerate}
\end{theorem}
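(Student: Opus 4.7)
The plan is to establish the unconditional equivalences $(1)\Leftrightarrow(2)\Leftrightarrow(3)$ first and then add $(4)$ and $(5)$ under their respective extra hypotheses. The key external inputs are Proposition~\ref{p:EssentialSmall}, whose part~(1) converts $\mathcal F$-essentiality into $\mathcal F$-smallness and whose part~(2) prevents factorizations through proper direct summands, together with B\"uhler's obscure axiom.

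I would get $(1)\Leftrightarrow(2)$ directly from Proposition~\ref{p:EssentialSmall}(1) combined with the common condition that $Y$ is $\mathcal F$-injective. For $(2)\Rightarrow(3)$, a morphism $f\colon Y\to Z$ with $fu$ an $\mathcal F$-inflation is forced by $\mathcal F$-essentiality to be an $\mathcal F$-inflation, which splits because $Y$ is $\mathcal F$-injective. For $(3)\Rightarrow(2)$ I would use Lemma~\ref{l:CharSmall}: because $X$ is $\mathcal F$-admissible in $Y$, the condition that $fu$ defines an $\mathcal F$-partial isomorphism on $X$ is equivalent to $fu$ being an $\mathcal F$-inflation, and then $(3)$ provides a split monomorphism, which is an $\mathcal F$-inflation.

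For $(1)\Rightarrow(4)$, $u$ is an $\FInj$-preenvelope because it is an $\mathcal F$-inflation into an $\mathcal F$-injective. If $h\colon Y\to Y$ satisfies $hu=u$, then $hu$ is an $\mathcal F$-inflation, so $(3)$ makes $h$ a split monomorphism; applying Proposition~\ref{p:EssentialSmall}(2) to the factorization $u=h\circ u$ then forces $h$ to be an isomorphism. For $(4)\Rightarrow(1)$ I would first upgrade $u$ to an $\mathcal F$-inflation using the auxiliary $v$: since $u$ is a preenvelope, $v=wu$ for some $w\colon Y\to E$, and since $u$ has a cokernel and $wu=v$ is an $\mathcal F$-inflation, the obscure axiom applied inside $\mathcal F$ (\cite[Proposition~2.16]{Buhler}) yields that $u$ itself is an $\mathcal F$-inflation. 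For essentiality, given $f\colon Y\to Z$ with $fu$ an $\mathcal F$-inflation, the $\mathcal F$-injectivity of $Y$ provides $g\colon Z\to Y$ with $(gf)u=u$; the envelope property then makes $gf$ an isomorphism, so $f$ is a split monomorphism and in particular an $\mathcal F$-inflation.

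The implication $(1)\Rightarrow(5)$ is immediate because every $\mathcal F$-inflation is an inflation. For $(5)\Rightarrow(1)$ the given $v\colon X\to E$ is $\mathcal F$-essential with $E$ an $\mathcal F$-injective, so by the just-established $(1)\Rightarrow(4)$ it is an $\mathcal F$-injective envelope. The $\mathcal F$-injectivity of $Y$ extends $u$ along $v$, giving $s\colon E\to Y$ with $sv=u$, and the $\mathcal F$-injectivity of $E$ together with $u$ being an $\mathcal F$-inflation (from weak essentiality) extends $v$ along $u$, giving $t\colon Y\to E$ with $tu=v$. Then $(ts)v=v$, so the envelope property of $v$ makes $ts$ an isomorphism and $s$ a split monomorphism. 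Proposition~\ref{p:EssentialSmall}(2) applied to $u=sv$, with $s$ split mono and $v$ an inflation, together with weak $\mathcal F$-essentiality of $u$, forces $s$ to be an isomorphism; then $u=sv$ is $\mathcal F$-essential because postcomposing an $\mathcal F$-essential extension with an isomorphism preserves $\mathcal F$-essentiality. The main obstacle is $(4)\Rightarrow(1)$: the envelope property alone does not guarantee that $u$ is an $\mathcal F$-inflation, which is exactly what the cokernel hypothesis and auxiliary $v$ are introduced to fix via the obscure axiom.
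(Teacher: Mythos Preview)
Your proof is correct and follows essentially the same route as the paper's. The paper treats $(1)\Leftrightarrow(3)$ as immediate from the definition of $\mathcal F$-essential, proves $(4)\Rightarrow(3)$ rather than $(4)\Rightarrow(1)$, and in $(1)\Rightarrow(4)$ and $(5)\Rightarrow(1)$ uses a direct mono-cancellation argument (e.g.\ from $gf=1_Y$ and $gu=u$ it deduces that $g$ is monic via essentiality, whence $gfg=g$ gives $fg=1_Y$) instead of invoking Proposition~\ref{p:EssentialSmall}(2); your systematic use of Proposition~\ref{p:EssentialSmall}(2) is a tidy alternative that packages the same cancellation.
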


\begin{proof}
  (1) $\Leftrightarrow$ (2) is Proposition \ref{p:EssentialSmall} and
  (1) $\Leftrightarrow$ (3) is trivial.

  (1) $\Rightarrow$ (4). Since $u$ is an $\mathcal F$-inflation, it is
  an $\mathcal F$-injective preenvelope. In order to see that it is an
  envelope let $f:Y \rightarrow Y$ be a morphism such that
  $fu=u$. Since $u$ is $\mathcal F$-essential, $f$ is an
  $\mathcal F$-inflation. Using that $Y$ is $\mathcal F$-injective, we
  deduce that $f$ is a splitting monomorphism, i. e., there exists
  $g:Y \rightarrow Y$ such that $gf=1_Y$. Then $gu=gfu=u$ and, in
  particular, $g$ is a monomorphism. Then $gfg=g=g1_Y$. In particular,
  $fg=1_Y$, which implies that $f$ is an isomorphism.

  (4) $\Rightarrow$ (3). Since $u$ is an $\mathcal F$-injective
  envelope, there exist $w:Y \rightarrow E$ such that $wu=v$. By the
  obscure axiom \cite[Proposition 2.16]{Buhler}, $u$ is an
  $\mathcal F$-inflation. Now let $f:Y \rightarrow Z$ be a morphism
  such that $fu$ is an $\mathcal F$-inflation. Since $Y$ is
  $\mathcal F$-injective, there exists $g:Z \rightarrow Y$ such that
  $gfu=u$. Using that $u$ is an $\mathcal F$-injective envelope we get
  that $gf$ is an isomorphism. This implies that $f$ is a split monic.

  (5) $\Rightarrow$ (1). Since $v$ is an $\mathcal F$-inflation and $Y$
  is $\mathcal F$-injective, there exists $w:E \rightarrow Y$ with
  $wv=u$. Since $u$ is $\mathcal F$-inflation and $v$ is
  $\mathcal F$-essential, $w$ is an $\mathcal F$-inflation. Using that $E$
  is $\mathcal F$-injective, there exists
  $w':Y \rightarrow E$ such that $w'w=1_E$. Then $w'u=v$ is an
  $\mathcal F$-inflation so that, since $u$ is weakly $\mathcal F$-essential,
  $w'$ has to be monic. Then $w'ww'=w'1_Y$ implies that $ww'=1_Y$, so
  that $w$ is an isomorphism. Now the identity $wv=u$ gives that $u$ is
  $\mathcal F$-essential as well.
\end{proof}



\begin{rem}
  Note that the additional hypotheses of (4) (resp. (5)) are
    only needed to prove the implication $(4) \Rightarrow (1)$ (resp.
    $(5) \Rightarrow (1)$). The implication $(1) \Rightarrow (4)$
    (resp. $(1) \Rightarrow (5)$) is true without those hypotheses. In
    particular, any $\mathcal F$-injective hull is always an
    $\mathcal F$-injective envelope.
\end{rem}

\noindent Let $R$ be any ring. In \cite[Proposition 6]{Warfield} it is
proved that for each module $M$ there exists a weakly pure-essential
extension $u:M \rightarrow E$ with $E$ a pure-injective module. In view of the
preceding result, $u$ need not be the pure-injective hull of
$M$. However, one can prove that pure-injective hulls exist by using
the existence of injective hulls in the functor category \cite[Theorem
4.3.18]{Prest09}, so that, by (5) of the preceding theorem, $u$ is
actually pure-essential. That is, \cite[Proposition
6]{Warfield} actually gives the existence of pure-injective hulls in
$\ModR$.


\bigskip

\section{Existence of hulls and envelopes}
\label{sec:exist-hulls-envel}

\bigskip

\noindent In this section we study the problem of existence of
injective hulls and envelopes in our exact category $\mathcal
A$. First, we study when there does exist enough injectives
(equivalently, injective preenvelopes). Then, we prove that in certain
abelian categories this preenvelopes can be used to produce injective
envelopes and hulls.

\noindent Recall that a $\lambda$-sequence, where $\lambda$ is an
ordinal, is a direct system of objects of $\mathcal A$,
$(X_\alpha,i_{\beta\alpha})_{\alpha<\beta<\lambda}$, which is continuous in the sense that for
each limit ordinal $\beta$, the direct limit of the system
$(X_\alpha,i_{\gamma\alpha})_{\alpha<\gamma<\beta}$ exists and the
canonical morphism
$\displaystyle \lim_{\substack{\longrightarrow\\ \alpha <
    \beta}}X_\alpha \rightarrow X_\beta$ is an isomorphism. If the
direct limit of the system exists, we shall call the morphism
$\displaystyle X_0 \rightarrow \lim_{\longrightarrow}X_\alpha$ the
transfinite composition of the $\lambda$-sequence. In many results of
this section we shall use that transfinite compositions of inflations
exist and are inflations. When this condition is satisfied, the
category $\mathcal A$ has arbitrary direct sums and direct sums of
conflations are conflations \cite[Lemma
1.4]{SaorinStovicek11}. Moreover, it is easy to see that when direct
limits of inflations are inflations, then transfinite compositions of
$\lambda$-sequences of inflations are inflations for each ordinal number $\lambda$.

Now we define the notion of small object. Given an object $X$, and a
direct system in $\mathcal A$, $(Y_i,u_{ji})_{i < j \in I}$, such that
its direct limit exists, the functor $\Hom_{\mathcal A}(X,-)$ is said
to \textit{preserve the direct limit of the system} if the canonical
morphism from
$\displaystyle \lim_{\longrightarrow}\Hom_{\mathcal A}(X,Y_i)$ to
$\displaystyle \Hom_{\mathcal
  A}\left(X,\lim_{\longrightarrow}Y_i\right)$ is an isomorphism. It is
very easy to see the following \cite[p. 9]{AdamekRosicky}:

\begin{lem}\label{l:PreserveLimits}
  Let $X$ be an object and $(Y_i,u_{ji})_{i < j \in I}$ a direct
  system such that its direct limit exists, and denote by
  $\displaystyle u_i:Y_i \rightarrow \lim_{\longrightarrow}Y_j$ the
  canonical map for each $i \in I$. Then $\Hom_{\mathcal A}(X,-)$
  preserves the direct limit of the system if and only if the
  following conditions hold:
  \begin{enumerate}
  \item For each
    $\displaystyle f:X \rightarrow \lim_{\longrightarrow}Y_j$ there
    exists $i \in I$ and $g:X \rightarrow Y_i$ such that $f=u_ig$.

  \item For each $i \in I$ and morphism $g:X \rightarrow Y_i$
    satisfying $u_ig=0$, there exists $j \geq i$ such that
    $u_{ji}g=0$.
  \end{enumerate}
\end{lem}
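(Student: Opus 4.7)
The plan is to unpack the canonical morphism
\[
\Phi : \lim_{\longrightarrow} \Hom_{\mathcal{A}}(X, Y_i) \longrightarrow \Hom_{\mathcal{A}}\bigl(X, \lim_{\longrightarrow} Y_j\bigr),
\]
and to recognize that (1) is equivalent to the surjectivity of $\Phi$, while (2) is equivalent to the injectivity of $\Phi$. Since $\Hom_{\mathcal{A}}(X,-)$ takes values in $\mathbf{Ab}$, the source of $\Phi$ is a colimit of abelian groups: its elements are equivalence classes $[g]$ of morphisms $g : X \to Y_i$ (for varying $i \in I$), where $g : X \to Y_i$ and $g' : X \to Y_j$ represent the same class if and only if there exists $k \geq i, j$ with $u_{ki}\, g = u_{kj}\, g'$. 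The map $\Phi$ sends $[g]$ to $u_i\, g$, which is well defined because $u_i = u_k u_{ki}$ for each $k \geq i$.

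First I would read off condition (1) as the surjectivity of $\Phi$: this is immediate, since (1) says precisely that every $f : X \to \lim_{\longrightarrow} Y_j$ can be written as $u_i g = \Phi([g])$ for some $i \in I$ and some $g : X \to Y_i$.

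Next I would handle injectivity in both directions. If $\Phi$ is injective and $u_i g = 0$, then $\Phi([g]) = 0 = \Phi([0])$, so $[g] = 0$ in the colimit, which unravels to the existence of $j \geq i$ with $u_{ji} g = 0$, i.e.\ (2). Conversely, assuming (2), suppose $\Phi([g_1]) = \Phi([g_2])$ with $g_1 : X \to Y_i$ and $g_2 : X \to Y_j$; choose $k \geq i, j$ and set $h = u_{ki}\, g_1 - u_{kj}\, g_2$ (using here that the hom-sets are abelian groups and the transition maps are additive). Then $u_k h = u_i g_1 - u_j g_2 = 0$, so (2) furnishes $l \geq k$ with $u_{lk} h = 0$, that is, $u_{li}\, g_1 = u_{lj}\, g_2$, whence $[g_1] = [g_2]$. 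Combining surjectivity and injectivity gives that $\Phi$ is an isomorphism, as required.

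No substantial obstacle should arise; the only ingredient beyond routine unfolding of definitions is the reduction of the injectivity of $\Phi$ to the vanishing case recorded in (2), which relies on the additive structure of $\mathcal A$.
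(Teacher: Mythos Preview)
Your argument is correct. The paper does not give its own proof of this lemma: it simply states that the result is ``very easy to see'' and refers to \cite[p.~9]{AdamekRosicky}. What you have written is exactly the standard unpacking of the canonical comparison map for a filtered colimit in $\mathbf{Ab}$, identifying surjectivity with condition (1) and injectivity with condition (2); the reduction of injectivity to the vanishing case via $h = u_{ki}g_1 - u_{kj}g_2$ is the only nontrivial step and uses additivity exactly as you note. There is nothing to compare against and no gap to fill.
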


Recall that the cofinality of a cardinal $\kappa$ is the least
cardinal, denoted $\cf(\kappa)$, such that there exists a family of
smaller cardinals than $\kappa$,
$\{\kappa_\alpha:\alpha < \cf(\kappa)\}$, whose union is $\kappa$. The
cardinal $\kappa$ is said to be regular if $\cf(\kappa)=\kappa$.

\begin{defn}
  Suppose that transfinite compositions of inflations exist and are inflations. Let $\kappa$ be an infinite regular cardinal and $X$ be an
  object. We say that $X$ is $\kappa$-small if for each cardinal
  $\lambda$ with $\cf(\lambda) \geq \kappa$, $\Hom_{\mathcal A}(X,-)$
  preserves the transfinite composition of any $\lambda$-sequence of
  inflations. We say that the object $X$ is small if it is
  $\kappa$-small for some infinite regular cardinal $\kappa$.
\end{defn}

  \begin{lem}
    Suppose that transfinite compositions of inflations exist and are inflations. Let $\kappa$ be an infinite regular cardinal and $\{X_k:k \in K\}$
    a family of $\kappa$-small objects with $|K| < \kappa$. Then
    $\bigoplus_{k \in K}X_k$ is $\kappa$-small. In particular, the
    direct sum of any family of small objects is small.
  \end{lem}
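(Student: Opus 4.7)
The plan is to verify the two conditions of Lemma \ref{l:PreserveLimits} for $X = \bigoplus_{k\in K} X_k$ applied to any $\lambda$-sequence of inflations $(Y_\alpha, i_{\beta\alpha})_{\alpha<\beta<\lambda}$ with $\cf(\lambda) \geq \kappa$, writing $u_\alpha\colon Y_\alpha \to Y:=\lim_{\longrightarrow} Y_\alpha$ for the canonical morphisms and $\iota_k\colon X_k \to \bigoplus_{k\in K}X_k$ for the coproduct injections.

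For condition (1), given a morphism $f\colon \bigoplus_{k\in K} X_k \to Y$, apply $\kappa$-smallness of each $X_k$ to $f\iota_k$ to obtain an ordinal $\alpha_k<\lambda$ and a morphism $g_k\colon X_k \to Y_{\alpha_k}$ with $u_{\alpha_k}g_k = f\iota_k$. Since $|K| < \kappa \leq \cf(\lambda)$, the ordinal $\alpha := \sup_{k\in K}\alpha_k$ is strictly less than $\lambda$. Composing with $i_{\alpha,\alpha_k}$ and using the universal property of the coproduct yields a morphism $g\colon \bigoplus_{k\in K}X_k \to Y_\alpha$ with $g\iota_k = i_{\alpha,\alpha_k}g_k$, whence $u_\alpha g\iota_k = u_{\alpha_k}g_k = f\iota_k$ for every $k$, so $u_\alpha g = f$.

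For condition (2), given $g\colon \bigoplus_{k\in K} X_k \to Y_\alpha$ with $u_\alpha g =0$, we have $u_\alpha (g\iota_k)=0$ for every $k\in K$, so by $\kappa$-smallness of $X_k$ there exists $\beta_k \geq \alpha$ with $i_{\beta_k,\alpha}(g\iota_k)=0$. Again $\beta := \sup_{k\in K}\beta_k < \lambda$ since $|K|<\cf(\lambda)$, and then $i_{\beta,\alpha}g\iota_k = i_{\beta,\beta_k}i_{\beta_k,\alpha}g\iota_k = 0$ for all $k$, so $i_{\beta,\alpha}g=0$ by the universal property of the coproduct.

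For the final assertion, given a family $\{X_k : k\in K\}$ where each $X_k$ is $\kappa_k$-small for some infinite regular cardinal $\kappa_k$, choose any infinite regular cardinal $\kappa$ with $\kappa > |K|$ and $\kappa \geq \kappa_k$ for all $k$ (e.g.\ the successor cardinal of $|K| + \sup_k \kappa_k$). Since $\cf(\lambda)\geq \kappa \geq \kappa_k$ forces $\Hom_{\mathcal A}(X_k,-)$ to preserve transfinite compositions of $\lambda$-sequences of inflations for every such $\lambda$, each $X_k$ is $\kappa$-small, and the first part applies. The main (modest) obstacle is simply the cofinality bookkeeping guaranteeing the suprema $\alpha$ and $\beta$ remain below $\lambda$; everything else is a routine application of Lemma \ref{l:PreserveLimits} together with the universal property of coproducts.
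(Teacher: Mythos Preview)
Your proof is correct and follows essentially the same approach as the paper: both verify the two conditions of Lemma~\ref{l:PreserveLimits} by factoring through each coproduct injection, applying $\kappa$-smallness componentwise, and then bounding the resulting ordinals using $|K|<\cf(\lambda)$. The only difference is that you spell out the ``in particular'' clause (choosing a common regular $\kappa$ dominating $|K|$ and all the $\kappa_k$), which the paper leaves implicit.
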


  \begin{proof}

    Let $\lambda$ be any cardinal with $\cf(\lambda) \geq \kappa$ and
    $(Y_\alpha,u_{\beta\alpha})_{\alpha < \beta < \lambda}$, a
    $\lambda$-sequence of inflations whose direct limit is $Y$. Denote by $u_\alpha:Y_\alpha \rightarrow Y$ the canonical morphism for each
    $\alpha < \lambda$. We are going to use Lemma
    \ref{l:PreserveLimits} in order to prove that $\bigoplus_{k \in K}X_k$ is $\kappa$-small. Let
    $f:\bigoplus_{k \in K}X_k \rightarrow Y$ be a morphism and denote
    by $\tau_k:X_k \rightarrow \bigoplus_{k \in K}X_k$ the inclusion
    for each $k \in K$. Since, for each $k \in K$, $X_k$ is
    $\kappa$-small, there exists $\alpha_k < \lambda$ and a morphism
    $g_k:X_k \rightarrow Y_{\alpha_k}$ such that
    $u_{\alpha_k}g_k=f\tau_k$. Since $|K| < \cf(\lambda)$, we can find
    an ordinal $\alpha$ with $\alpha_k < \alpha$ for each $k \in
    K$. Now let $g:\bigoplus_{k \in K}X_k \rightarrow Y_\alpha$ be the
    morphism induced in the direct sum by the family
    $\{u_{\alpha\alpha_k}g_k:k \in K\}$ and note that $g$ satisfies
    $u_\alpha g = f$, as $u_\alpha g \tau_k = f\tau_k$
    for each $k \in K$. This proves (1) of Lemma
    \ref{l:PreserveLimits}.

    In order to prove (2), let $\alpha < \lambda$ and
    $f:\bigoplus_{k \in K}X_k \rightarrow Y_\alpha$ such that
    $u_\alpha f=0$. Since, for each $k \in K$, $X_k$ is
    $\kappa$-small, there exists $\alpha_k \geq \alpha$ such that
    $u_{\alpha_k\alpha}f\tau_k=0$. Using that $|K|<\cf(\lambda)$,
    there exists a $\beta < \lambda$ such that $\alpha_k < \beta$ for
    each $k \in K$. Then $u_{\beta\alpha}f\tau_k=0$ for each
    $k \in K$. This means that $u_{\beta\alpha}f=0$, which proves (2)
    of Lemma \ref{l:PreserveLimits}.
  \end{proof}

  Now we can prove the existence of enough injective objects in
  exact categories satisfying that transfinite compositions of inflations exist and are inflations, and a certain generalized version of Baer's lemma for injectivity.

\begin{theorem}\label{t:ExistenceInjectives}
  Assume that the exact category $(\mathcal A;\mathcal E)$ satisfy the
  following:
  \begin{enumerate}
  \item Transfinite compositions of inflations exist and are
    inflations.

  \item There exists a set of inflations
    $\mathcal H = \{u_i:K_i \rightarrow H_i|i \in I\}$ such that $K_i$
    is small for each $i \in I$ and any $\mathcal H$-injective object
    is injective.
  \end{enumerate}
  Then $\mathcal A$ has enough injectives.
\end{theorem}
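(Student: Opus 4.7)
The plan is to use Quillen's small object argument, iterated transfinitely, to construct for each object $A$ an inflation $A \to E$ with $E$ injective.

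First, I will show how to enlarge $A$ to kill one batch of obstructions. Since $I$ is a set, for any object $B$ the class of pairs $\{(i,\psi) : i\in I,\ \psi\in \Hom_{\mathcal A}(K_i,B)\}$ is a set; form the coproduct of the $u_i$ indexed by this set,
\begin{displaymath}
\bigoplus_{(i,\psi)} u_i : \bigoplus_{(i,\psi)} K_i \longrightarrow \bigoplus_{(i,\psi)} H_i .
\end{displaymath}
By hypothesis (1) and \cite[Lemma 1.4]{SaorinStovicek11}, direct sums of conflations are conflations, so this coproduct is an inflation. Let $\varphi : \bigoplus K_i \to B$ be the canonical morphism assembled from the $\psi$'s, and take the pushout
\begin{displaymath}
\begin{tikzcd}
\bigoplus K_i \arrow{r}{\oplus u_i} \arrow{d}{\varphi} & \bigoplus H_i \arrow{d}\\
B \arrow{r}{v_B} & B'
\end{tikzcd}
\end{displaymath}
Then $v_B$ is an inflation, and by construction every morphism $K_i \to B$ extends along $u_i$ to a morphism $H_i \to B'$.

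Next, I will iterate. Since $I$ is a set and each $K_i$ is small, I can pick a regular cardinal $\kappa$ with $K_i$ being $\kappa$-small for every $i\in I$ (replacing each $\kappa_i$-smallness by a common upper bound; note $\kappa$-smallness is inherited when the cardinal is enlarged, because the condition on the cofinality only becomes weaker). Define a $\kappa$-sequence $(A_\alpha)_{\alpha<\kappa}$ by $A_0=A$, $A_{\alpha+1}=(A_\alpha)'$ using the construction above with inflation $v_{A_\alpha}: A_\alpha \to A_{\alpha+1}$, and $A_\beta = \varinjlim_{\alpha<\beta}A_\alpha$ at limit ordinals. By hypothesis (1) every transition morphism is an inflation, so the transfinite composition $u: A \to E := \varinjlim_{\alpha<\kappa}A_\alpha$ is an inflation.

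Finally, I will show $E$ is $\mathcal{H}$-injective, which by hypothesis (2) forces $E$ to be injective. Given $i \in I$ and $f : K_i \to E$, smallness of $K_i$ together with $\cf(\kappa)=\kappa$ and Lemma \ref{l:PreserveLimits} yields $\alpha<\kappa$ and $g : K_i \to A_\alpha$ with $f = u_\alpha g$, where $u_\alpha : A_\alpha \to E$ is the canonical morphism. By construction of $A_{\alpha+1}$, the pair $(i,g)$ was included in the indexing at stage $\alpha$, so $g$ extends to a morphism $h : H_i \to A_{\alpha+1}$ with $h u_i = v_{A_\alpha} g$. Composing with $u_{\alpha+1}:A_{\alpha+1}\to E$ produces an extension of $f$ to $H_i$. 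Thus $E$ is $\mathcal{H}$-injective, hence injective, and $u : A \to E$ is the required inflation.

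The main obstacle is verifying that the pushout step really is available and preserves the inflation property at every stage, and that the transfinite chain actually terminates effectively at the chosen cardinal; both are handled by hypothesis (1) (closure under direct sums and transfinite composition of inflations) together with the uniform $\kappa$-smallness of the set $\{K_i\}_{i\in I}$ applied through Lemma \ref{l:PreserveLimits}.
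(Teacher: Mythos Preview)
Your proof is correct and follows essentially the same strategy as the paper's: Quillen's small object argument, iterated up to a regular cardinal $\kappa$ dominating all the smallness witnesses. The only cosmetic difference is that the paper first packages the family $\mathcal H$ into a single inflation $u:K\to H$ (a direct sum of copies of the $u_i$ indexed by pairs $(i,f)$ with $f:K_i\to M$), applies the lemma on direct sums of small objects to make $K$ itself $\kappa$-small, and then runs the iteration relative to this one map $u$; you instead work directly with the individual $u_i$ and a common $\kappa$, which is arguably cleaner and avoids the paper's slightly artificial dependence of $K$ on the starting object $M$.
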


\begin{proof}
  Let $M$ be any object of $\mathcal A$. Let $J$ be the set of all
  pairs $(i,f)$, where $i$ is an element of $I$ and
  $f:K_i \rightarrow M$ is a morphism. For any pair $(i,f) \in J$, let 
  $u_{(i,f)}:K_{(i,f)}\rightarrow H_{(i,f)}$ be a copy of $u_i$,
  where $K_{(i,f)} = K_i$ and $H_{(i,f)} = H_i$, and compute $u$ the
  induced morphism from
  $\displaystyle \bigoplus_{(i,f) \in J}K_{(i,f)}$ to
  $\displaystyle \bigoplus_{(i,f) \in J}H_{(i,f)}$ by all these
  inclusions. By the properties of $\mathcal H$, it is easy to see
  that an object $E$ is injective if and only if it is
  $u$-injective. Denote
  $\displaystyle \bigoplus_{(i,f) \in J}K_{(i,f)}$ by $K$ and
  $\displaystyle \bigoplus_{(i,f) \in J}H_{(i,f)}$ by $H$. Since $K_i$
  is small for each $i \in I$, we can apply Lemma
  \ref{l:PreserveLimits} to find an infinite regular cardinal $\kappa$
  such that $K$ is $\kappa$-small.

  Now we are going to construct a family of objects
  $\{P_\alpha:\alpha < \kappa\}$ and of inflations
  $\{f_{\alpha\beta} \in \Hom(P_\beta,P_\alpha): \alpha \leq \beta <
  \kappa\}$ such that:
  \begin{enumerate}
  \item[(A)] $P_0=M$.

  \item[(B)] For each $\alpha < \kappa$, the system
    $(P_\gamma,f_{\delta\gamma})_{\gamma < \delta \leq \alpha}$ is
    direct.

  \item[(C)] For each $\alpha < \kappa$ and
    $f:K \rightarrow P_\alpha$, there exists
    $g:H \rightarrow P_{\alpha+1}$ with $gu=f_{\alpha+1,\alpha}f$.
  \end{enumerate}

  We make the construction by transfinite recursion. Suppose that $\alpha$ is a limit ordinal and that we have made the construction for all
 $\gamma < \alpha$. Then set $P_\alpha = \varinjlim_{\gamma < \alpha}P_\gamma$ and, for each $\gamma < \alpha$, set $f_{\alpha\gamma}$ the canonical morphism associated to this direct limit.
  
  Now suppose that we have made the construction for the ordinal $\alpha$ and let us make it for $\alpha+1$. For each morphism $f \in \Hom(K,P_\alpha)$, let
  $K^\alpha_f$ and $H^\alpha_f$ be a copies of $K$ and $H$
  respectively. Denote by $I_\alpha = \Hom(K,P_\alpha)$, let
  $u_\alpha:\bigoplus_{f \in I_\alpha}K^\alpha_f \rightarrow \bigoplus_{f \in
    I_\alpha}H^\alpha_f$ be the direct sum of copies of $u$, and
  $\varphi_\alpha:\bigoplus_{f \in I_\alpha}K^\alpha_f \rightarrow
  P_{\alpha}$ the morphism induced in the direct sum by all morphism
  from $K$ to $P_\alpha$. Then take $P_{\alpha+1}$ and
  $f_{\alpha+1,\alpha}$ the lower arrow in the pushout of $u_\alpha$
  along $\varphi_\alpha$:
  \begin{displaymath}
    \begin{tikzcd}
      \bigoplus_{f \in I_\alpha}K^\alpha_f \arrow{r}{u_\alpha}
      \arrow{d}{\varphi_\alpha} & \bigoplus_{f \in I_\alpha}H^\alpha_f
      \arrow{d}{\psi_\alpha}\\P_\alpha \arrow{r}{f_{\alpha+1,\alpha}}
      & P_{\alpha+1}
    \end{tikzcd}
  \end{displaymath}
  Moreover, set $f_{\alpha+1,\gamma} = f_{\alpha+1,\alpha}f_{\alpha,\gamma}$ for each $\gamma < \alpha$.
  Let us prove that $P_{\alpha+1}$ and $f_{\alpha+1,\alpha}$ satisfy
  (C). Let us denote, for each $f \in I_\alpha$, by $i_f$ and $k_f$ the
  corresponding inclusions of $K_f^{\alpha}$ and $H_f^{\alpha}$ in
  $\bigoplus_{f \in I_\alpha}K^\alpha_f$ and $\bigoplus_{f \in I_\alpha}H^\alpha_f$
  respectively. Given
  $f:K \rightarrow P_{\alpha}$, note that $u_\alpha i_f = u k_f$ and $f=\varphi_\alpha i_f$. Consequently:
  \begin{displaymath}
    f_{\alpha+1,\alpha}f = \psi_\alpha u_\alpha i_f = \psi_\alpha k_f u
  \end{displaymath}
  Then the morphism $g=\psi_\alpha k_f$ satisfy (C). This concludes
  the construction.

  Finally, let
  $\displaystyle E = \lim_{\substack{\longrightarrow\\ \alpha <
      \kappa}}P_\alpha$ and denote by
  $f_\alpha:P_\alpha \rightarrow E$ the canonical maps associated to
  this direct limit. By (1),
  $f_0:M \rightarrow E$ is an inflation. Let us prove that $E$ is injective which, by (2), is equivalent to see that $E$ is
  $u$-injective. Let
  $f:K \rightarrow E$ be any morphism. Since $K$ is $\kappa$-small,
  there exists, by Lemma \ref{l:PreserveLimits}, an $\alpha < \kappa$
  and a morphism $\overline f:K \rightarrow P_\alpha$ such that
  $f = f_\alpha \overline f$. By the construction of $E$, there exists
  $\overline g:F \rightarrow P_{\alpha+1}$ such that
  $f_{\alpha+1,\alpha}\overline f = \overline g u$. Then
  $f = f_{\alpha+1}\overline g u$, and the proof is finished.
\end{proof}

\begin{rem}\label{r:StructurePreenvelope}
Let $M$ be an object of $\mathcal A$. Note that the $\mathcal F$-inflation $i:M \rightarrow E$ with $E$ an $\mathcal F$-injective object constructed in the preceding proof satisfies the following property: there exists an infinite regular cardinal $\kappa$ and a $\kappa$-sequence $(P_\alpha,f_{\beta\alpha})_{\alpha < \beta < \kappa}$ such that $P_0=M$, $i$ is the transfinite composition of the sequence and, for each $\alpha < \kappa$, $f_{\alpha+1,\alpha}$ is a pushout of a direct sum of inflations belonging to $\mathcal H$.
\end{rem}

\begin{rem}\label{r:Hyphoteses}
  Note that (2) in the preceding theorem is satisfied for those exact
  categories for which there exists a set of objects $\mathcal G$ such
  that:
  \begin{enumerate}
  \item The class of admissible subobjects of any $G \in \mathcal G$
    is a set.

  \item Any admissible subobject of any $G \in \mathcal G$ is small.

  \item If an object $A$ of $\mathcal A$ is $\mathcal G$-injective,
    then it is injective.
  \end{enumerate}
  In this case, we only have to take $\mathcal H$ as the set of all
  inflations $u:K \rightarrow G$ with $G$ an object in $\mathcal G$.
\end{rem}

We finish the paper studying the existence of injective hulls. We
assume that our category $\mathcal A$ is abelian and that $\mathcal E$
is the abelian exact structure. Using the argument of Enochs and Xu in
\cite[$\S$2.2]{Xu} we will prove that in an exact substructure
$\mathcal F$, if an object $M$ is an $\mathcal F$-admissible subobject
of an $\mathcal F$-injective object, then $M$ actually has an
$\mathcal F$-injective hull. We shall need the hypothesis that
$\mathcal F$ is closed under well ordered limits. This condition is
stronger than being closed under transfinite compositions as the next
example shows.

\begin{expl} \rm
  Let $R$ be a non-noetherian countable ring. Then there exists an
  fp-injective module $M$ which is not injective. Consider the exact
  structure $\mathcal E^M$. Since $M$ is not injective, there exists an inclusion $u:I \rightarrow R$ which is not an
  $\mathcal E^M$-inflation. Since $I$ is countable,
  $I=\bigcup_{n < \omega}I_n$ for a chain of finitely generated right
  ideals of $R$. Now each inclusion $u_n:I_n \rightarrow R$ is an
  $\mathcal E^M$ inflation and the direct limit of all of them is
  $u$. Note that $\mathcal E^M$ is closed under transfinite
  compositions by Lemma \ref{l:TransfiniteCompositions}.
\end{expl}

\begin{lem}\label{l:epim}
  Suppose that $\mathcal A$ is an abelian category, $A$ is an object
  of $\mathcal A$ and $f,g \in \textrm{End}_{\mathcal A}(A)$ are two
  monomorphisms. If $\Img f \subseteq \Img fg$ then $g$ is epic.
\end{lem}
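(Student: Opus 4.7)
The plan is to show that the hypothesis forces $\operatorname{Im} f = \operatorname{Im} fg$ as subobjects of $A$, and then to exploit the fact that $f$ and $fg$ are monic to compare their image factorizations.

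First I would observe that the reverse inclusion $\operatorname{Im} fg \subseteq \operatorname{Im} f$ always holds, because $fg$ factors as the composition $A \xrightarrow{g} A \xrightarrow{f} A$, so any morphism representing its image factors through $f$. Combined with the hypothesis, this gives $\operatorname{Im} f = \operatorname{Im} fg$ as subobjects of $A$. Let $m : I \hookrightarrow A$ denote a common inclusion representing this subobject.

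Next I would use the canonical image factorizations in the abelian category $\mathcal A$: write $f = m\circ p$ and $fg = m \circ q$ where $p, q : A \to I$ are the coimage maps (epimorphisms). Because $f$ is monic and $m$ is monic, $p$ must be monic as well, hence an isomorphism (any epi-mono in an abelian category whose composite is monic forces the epi to be iso). The same argument applied to $fg$ shows that $q$ is an isomorphism.

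Finally I would perform the cancellation. From $m \circ q = fg = m \circ p \circ g$ and the fact that $m$ is monic, we obtain $q = p\circ g$, i.e.\ $g = p^{-1}\circ q$, which is a composition of two isomorphisms. Hence $g$ is itself an isomorphism and, in particular, an epimorphism. No step here is really an obstacle; the only subtlety is to keep the image factorizations straight and to invoke monomorphicity of $f$ (and of $fg$, which follows since both $f$ and $g$ are monic) at the right moment to conclude that the coimage maps are isomorphisms.
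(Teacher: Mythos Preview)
Your argument is correct, but it takes a longer detour than necessary. The paper observes immediately that, since $f$ and $fg$ are monomorphisms in an abelian category, they themselves represent the subobjects $\operatorname{Im} f$ and $\operatorname{Im} fg$. The hypothesis $\operatorname{Im} f \subseteq \operatorname{Im} fg$ therefore directly yields a morphism $h:A\to A$ with $f = fgh$; cancelling the monic $f$ gives $gh = 1_A$, so $g$ is split epic. Your route---first establishing the reverse inclusion to get equality of images, then passing through explicit epi--mono factorizations $f=mp$, $fg=mq$, and finally invoking that epi${}+{}$mono${}={}$iso in an abelian category---reaches the same conclusion (indeed, you show $g$ is an isomorphism, which also follows from the paper's $gh=1_A$ together with the standing hypothesis that $g$ is monic). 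The equality of images and the isomorphisms $p,q$ are correct but not needed: once you know $f$ is its own image, one line suffices.
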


\begin{proof}
  Since $\mathcal A$ is abelian, each monomorphism is the kernel of
  its cokernel, so that the image $f$ and $fg$ are represented by the
  monomorphism $f$ and $fg$ respectively. The inclusion
  $\Img f \subseteq \Img fg$ as subobjects of $A$ implies that there
  exists a morphism $h:A \rightarrow A$ such that $f=fgh$. Then
  $f(1-gh)=0$ and, since $f$ is monic, $1-gh=0$. This implies that $g$
  is an epimorphism.
\end{proof}

Recall that an abelian category $\mathcal A$ is
said to satisfy AB5 if $\mathcal A$ is cocomplete and direct limits
are exact.

\begin{lem}\label{l:monomorphism}
  Suppose that $\mathcal A$ is an abelian category satisfying AB5. Let
  $\kappa$ be an ordinal,
  $(A_\alpha,u_{\beta\alpha})_{\alpha < \beta < \kappa}$ be a direct
  system of objects and
  $f: \displaystyle \lim_{\longrightarrow}A_\alpha \rightarrow A$ be a
  morphism. Suppose that for each $\alpha < \kappa$,
  $\Ker (fu_\alpha) = \Ker (u_{\alpha+1,\alpha})$, where
  $\displaystyle u_\alpha:A_\alpha \rightarrow
  \lim_{\longrightarrow}A_\gamma$ is the canonical morphism. Then $f$
  is a monomorphism.
\end{lem}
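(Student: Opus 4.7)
The strategy is to exploit AB5 to realize $\Ker(f)$ as the filtered colimit of the kernels $K_\alpha := \Ker(fu_\alpha)$, and then invoke the hypothesis to show each $K_\alpha$ already maps to zero in $L := \lim_{\longrightarrow} A_\alpha$, forcing $\Ker(f)=0$.

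First I would verify that the family $(K_\alpha)$ assembles into a direct subsystem of $(A_\alpha)$. Writing $k_\alpha \colon K_\alpha \to A_\alpha$ for the kernel inclusion, the identity $fu_\beta(u_{\beta\alpha}k_\alpha) = fu_\alpha k_\alpha = 0$ shows that for $\alpha \leq \beta$ the composite $u_{\beta\alpha}k_\alpha$ factors uniquely through $k_\beta$, producing transition maps $K_\alpha \to K_\beta$ and a morphism of direct systems $(K_\alpha) \hookrightarrow (A_\alpha)$. Dually, the family $(fu_\alpha)_\alpha$ defines a morphism of direct systems from $(A_\alpha)$ to the constant system $A$, whose colimit is $f$ (by the very universal property characterizing $f$) and whose kernel system is exactly $(K_\alpha)$.

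Next, AB5 enters in a single key step: filtered colimits are exact in $\mathcal A$ and therefore commute with kernels. Consequently $\lim_{\longrightarrow}K_\alpha = \Ker(f)$ as subobjects of $L$, with the canonical monomorphism $\Ker(f) \hookrightarrow L$ obtained as the colimit of the inclusions $k_\alpha$.

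Finally, the hypothesis $K_\alpha = \Ker(u_{\alpha+1,\alpha})$ gives $u_{\alpha+1,\alpha}k_\alpha = 0$, so $u_\alpha k_\alpha = u_{\alpha+1}\, u_{\alpha+1,\alpha}k_\alpha = 0$ for every $\alpha$. By the universal property of the colimit, the induced map $\lim_{\longrightarrow}K_\alpha \to L$ is therefore zero; but this map is the monomorphism $\Ker(f) \hookrightarrow L$, so $\Ker(f) = 0$ and $f$ is monic. The substantive point is the identification $\Ker(f) = \lim_{\longrightarrow}K_\alpha$, where AB5 is indispensable; I expect this to be the only step requiring care, as everything else is routine categorical bookkeeping. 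Without AB5 one cannot pass from the individual vanishings $u_\alpha k_\alpha = 0$ to the vanishing of $\Ker(f)$ itself.
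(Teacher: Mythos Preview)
Your proof is correct and follows essentially the same approach as the paper: both use AB5 to identify $\Ker f$ with $\varinjlim K_\alpha$ and then show this colimit vanishes. The only cosmetic difference is in that last step: the paper passes to the direct system of short exact sequences $0\to K_\alpha\to A_\alpha\xrightarrow{u_{\alpha+1,\alpha}} A_{\alpha+1}\to 0$ and observes that its colimit has identity middle map, whereas you argue directly that each $u_\alpha k_\alpha=0$ forces the induced map $\varinjlim K_\alpha\to L$ to be both monic and zero.
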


\begin{proof}
  Since $\mathcal A$ satisfies AB5, direct limits are exact and,
  consequently,
  $\displaystyle \Ker f=
  \lim_{\longrightarrow}\Ker(fu_\alpha)$. Denote
  $\Ker (fu_\alpha)=K_\alpha$ for each $\alpha < \kappa$. Since
  $K_\alpha$ is the kernel of $u_{\alpha+1,\alpha}$, we can construct,
  for each $\alpha < \kappa$, the following commutative diagram with exact rows
  \begin{displaymath}
    \begin{tikzcd}
      0 \arrow{r} & K_\alpha \arrow{r}{k_\alpha}
      \arrow{d}{k_{\alpha+1,n}} & A_\alpha
      \arrow{r}{u_{\alpha+1,\alpha}} \arrow{d}{u_{\alpha+1,\alpha}} &
      A_{\alpha+1} \arrow{r} \arrow{d}{u_{\alpha+1,\alpha+2}}& 0\\
      0 \arrow{r} & K_{\alpha+1} \arrow{r}{k_{\alpha+1}} &
      A_{\alpha+1} \arrow{r}{u_{\alpha+2,\alpha+1}} & A_{\alpha+2}
      \arrow{r} & 0
    \end{tikzcd}
  \end{displaymath}
  which actually defines a direct system of conflations. Taking direct limit and noting that
  $\displaystyle \lim_{\longrightarrow}u_{\alpha+1,\alpha}$ is the
  identity, the exactness of direct limits gives that
  $\displaystyle \lim_{\longrightarrow}K_\alpha=0$. Then $\Ker f=0$
  and $f$ is a monomorphism.
\end{proof}

Given an object $X$ of $\mathcal A$, recall that the \textit{comma
  category} $X \downarrow \mathcal A$ is the category whose class of
objects consists of all morphisms $f:X \rightarrow A$ with
$A \in \mathcal A$, and whose morphisms between two objects,
$u:X \rightarrow A$ and $v:X \rightarrow B$, are morphisms in
$\mathcal A$, $f:A \rightarrow B$, satisfying $fu=v$. Abusing
language, we shall denote the morphism between $u$ and $v$ by
$f:u \rightarrow v$ as well. Given a class
$\mathcal I$ of inflations of $\mathcal E$, we are going to denote by
$X \downarrow_{\mathcal I} \mathcal A$ the full subcategory of the
comma category $X \downarrow \mathcal A$ whose objects are all morphisms in $\mathcal I$.  We shall call an object $u$ of
$X \downarrow_{\mathcal I} \mathcal A$, a cogenerator if for any other
object $v$ of $X \downarrow_{\mathcal I} \mathcal A$, there exists a
morphism $f:v \rightarrow u$.

Recall that an abelian category is said to be \textit{locally small} if the class of subobjects of any object actually is a set.

\begin{lem}\label{l:ExistenceCogenerator}
  Let $\mathcal A$ be a locally small abelian category, $\mathcal E$
  the abelian exact structure and $\mathcal I$ a class of conflations of $\mathcal E$ which is closed under well ordered
  direct limits. Let $u$ be a cogenerator in
  $X \downarrow_{\mathcal I} \mathcal A$. Then there exists a
  cogenerator in $X \downarrow_{\mathcal I} \mathcal A$,
  $\overline u:X \rightarrow \overline E$, and a morphism
  $\overline f:u \rightarrow \overline u$ such that any morphism
  $f':\overline u \rightarrow u'$ in
  $X \downarrow_{\mathcal I} \mathcal A$ in which $u'$ is a
  cogenerator satisfies $\Ker(f'\overline f)=\Ker \overline f$.
\end{lem}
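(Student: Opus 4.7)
The plan is to build $\overline u$ by transfinite induction, at each successor stage composing with a morphism to a cogenerator as long as the kernel can be strictly enlarged. Write $u\colon X\to E$ and set $u_0:=u$, $f_0:=1_u$. Given $u_\alpha$ in $X\downarrow_{\mathcal I}\mathcal A$ together with a morphism $f_\alpha\colon u\to u_\alpha$, proceed as follows. If there exists a morphism $h\colon u_\alpha\to v$ in $X\downarrow_{\mathcal I}\mathcal A$ with $v$ a cogenerator and $\Ker(h\circ f_\alpha)\supsetneq \Ker f_\alpha$ as subobjects of $E$, choose such an $h$ and set $u_{\alpha+1}:=v$, $f_{\alpha+1}:=h\circ f_\alpha$; otherwise halt. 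At a limit ordinal $\lambda$, put $u_\lambda:=\varinjlim_{\alpha<\lambda}u_\alpha$, which lies in $X\downarrow_{\mathcal I}\mathcal A$ since $\mathcal I$ is closed under well ordered direct limits, and let $f_\lambda\colon u\to u_\lambda$ be the canonical morphism induced by the universal property.

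The subobjects $\Ker f_\alpha\leq E$ form an ascending chain in $\alpha$, strictly increasing at every successor stage where the construction has not yet halted. Since $\mathcal A$ is locally small, the subobjects of $E$ form a set, so the process must halt at some ordinal $\gamma$. If $\gamma$ is a successor (including $\gamma=0$), then $u_\gamma$ is by construction a cogenerator, and we set $\overline u:=u_\gamma$ and $\overline f:=f_\gamma$. If $\gamma$ is a limit, $u_\gamma$ need not itself be a cogenerator; however, since $u$ is a cogenerator, the comma category furnishes a morphism $h\colon u_\gamma\to u$, and we set $\overline u:=u$ and $\overline f:=h\circ f_\gamma$. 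In either case $\overline u$ is a cogenerator, and the halting condition at $\gamma$ applied to $h$ (in the limit case) gives $\Ker\overline f=\Ker(h\circ f_\gamma)=\Ker f_\gamma$, while in the successor case $\Ker\overline f=\Ker f_\gamma$ by definition.

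To verify the required property, let $f'\colon\overline u\to u'$ be a morphism in $X\downarrow_{\mathcal I}\mathcal A$ with $u'$ a cogenerator. Then $f'\circ\overline f$ factors as $g\circ f_\gamma$ with $g\colon u_\gamma\to u'$, where $g=f'$ in the successor case and $g=f'\circ h$ in the limit case. In both situations $g$ is a morphism from $u_\gamma$ to a cogenerator, so the halting condition forces $\Ker(g\circ f_\gamma)=\Ker f_\gamma=\Ker\overline f$, which is exactly $\Ker(f'\circ\overline f)=\Ker\overline f$.

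The main obstacle is the limit-termination case, since $u_\gamma$ for a limit ordinal $\gamma$ is not automatically a cogenerator; this is handled by invoking the cogenerator property of the originally given $u$ to produce a morphism $u_\gamma\to u$ and then appealing to the halting condition to see that this final composition preserves the accumulated kernel.
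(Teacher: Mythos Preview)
Your argument is correct and follows essentially the same transfinite strategy as the paper, which phrases it as a proof by contradiction rather than a direct construction that halts. One remark: your worry that a limit $u_\gamma$ might fail to be a cogenerator is unnecessary, since any object of $X\downarrow_{\mathcal I}\mathcal A$ receiving a morphism from the cogenerator $u$ (here $u_\gamma$ via $f_\gamma$) is automatically a cogenerator, so you may take $\overline u=u_\gamma$ and $\overline f=f_\gamma$ uniformly and drop the separate limit case.
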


\begin{proof}
  Assume that the claim of the lemma is not true. We are going to
  construct, for each pair of ordinals $\beta < \alpha$, cogenerators
  $u_\beta$ and $u_\alpha$ and a morphism
  $f_{\alpha\beta}:u_\beta \rightarrow u_\alpha$ such that $u_0=u$,
  the system
  $(u_\gamma,f_{\gamma\delta})_{\delta < \gamma \leq \alpha}$ is
  directed, $\Ker f_{\beta 0} \subsetneq \Ker f_{\alpha 0}$. This is a
  contradiction since the category is locally small.

  We shall make the construction recursively on $\alpha$. For
  $\alpha=0$ let $u_0=u$. Let $\alpha > 0$ and assume that we have
  constructed $u_\delta$ and $f_{\delta\gamma}$ for each
  $\gamma < \delta < \alpha$. If $\alpha$ is successor, say
  $\alpha = \beta+1$, then as $u_\beta$ does not satisfy the claim of
  the lemma, there exists an inflation $u_{\beta+1}$ in
  $X \downarrow_{\mathcal I} \mathcal A$ and a morphism
  $f_{\beta+1\beta}:u_\beta \rightarrow u_{\beta+1}$ such that
  $\Ker f_{\beta 0} \subsetneq \Ker(f_{\beta+1\beta}f_{\beta
    0})$. Then set
  $f_{\beta+1 \delta}=f_{\beta+1\beta}f_{\beta \delta}$ for each
  $\delta < \alpha$. Clearly,
  $\Ker f_{\alpha 0} \subsetneq \Ker f_{\beta 0}$.

  If $\alpha$ is limit, set
  $\displaystyle u_\alpha = \lim_{\substack{\longrightarrow\\ \delta <
      \alpha}} u_\delta$ and
  $f_{\alpha \delta}:u_\delta \rightarrow u_\alpha$ the structural
  morphisms of this direct limit. By hypothesis, $u_\alpha$ is an element
  of $X \downarrow_{\mathcal I} \mathcal A$ which is a cogenerator, as
  each $u_\beta$ is a cogenerator for each $\beta < \alpha$. Moreover,
  $\Ker f_{\beta 0} \subsetneq \Ker f_{\alpha 0}$ for each
  $\beta < \alpha$ because, otherwise, if
  $\Ker f_{\beta 0} = \Ker f_{\alpha 0}$, then
  $\Ker f_{\beta+1 0} = \Ker f_{\alpha 0}$ as well, so that
  $\Ker f_{\beta 0} = \Ker f_{\beta+1 0}$, a contradiction. This
  finishes the proof.
\end{proof}

\begin{theorem}\label{t:ExistenceHulls}
  Let $\mathcal A$ be a locally small abelian category satisfying AB5, $\mathcal E$ be the abelian exact structure of $\mathcal A$,
  and $\mathcal I$ a class of conflations of
  $\mathcal E$ which is closed under well ordered limits. Let $X$ be
  any object of $\mathcal A$ such that there exists an
  inflation $u:X \rightarrow E$ in $\mathcal I$ with $E$, an
  $\mathcal I$-injective object. Then there exists an inflation $v:X \rightarrow F$ with $F$ an $\mathcal I$-injective object such that $v$ is minimal.
\end{theorem}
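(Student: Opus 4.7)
The plan is to exhibit the desired minimal inflation by one application (and, if necessary, transfinite iteration) of Lemma~\ref{l:ExistenceCogenerator}, following the Enochs--Xu strategy.

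First I would observe that $u:X\to E$ is a cogenerator in the comma category $X\downarrow_{\mathcal I}\mathcal A$. Indeed, given any $\mathcal I$-inflation $v:X\to B$, the $\mathcal I$-injectivity of $E$ produces a morphism $g:B\to E$ with $gv=u$, i.e.\ a morphism $v\to u$ in the comma category. Hence the hypothesis of Lemma~\ref{l:ExistenceCogenerator} is satisfied, and we obtain a cogenerator $\overline u:X\to \overline E$ together with $\overline f:u\to\overline u$ enjoying the stability property: $\Ker(f'\overline f)=\Ker \overline f$ for every morphism $f':\overline u\to u'$ into a cogenerator $u'$. I claim that $v:=\overline u$ and $F:=\overline E$ satisfy the conclusion.

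For the $\mathcal I$-injectivity of $F$: given an $\mathcal I$-inflation $w:A\to B$ and a morphism $h:A\to F$, form the pushout $F\xrightarrow{i}P$ of $w$ along $h$, which is again an $\mathcal I$-inflation (pushouts of $\mathcal I$-inflations are $\mathcal I$-inflations), so $i\,\overline u:X\to P$ is an $\mathcal I$-inflation. The cogenerator property of $\overline u$ then yields $g:P\to F$ with $(gi)\overline u=\overline u$, and if one can show that $gi$ is an isomorphism, then $i$ is split by $(gi)^{-1}g$ and the extension of $h$ to $B$ is $(gi)^{-1}g\,h'$, where $h':B\to P$ is the pushout map. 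For the minimality of $\overline u$: any endomorphism $h:F\to F$ with $h\,\overline u=\overline u$ is a morphism $\overline u\to\overline u$ (and $\overline u$ is a cogenerator), so the stability gives $\Ker(h\overline f)=\Ker\overline f$, showing that $h$ is monic on $\Img \overline f$. Again everything hinges on promoting this partial monicity to the full conclusion that $h$ is an isomorphism.

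The main obstacle is exactly this bridge. To cross it I would use Lemma~\ref{l:monomorphism}: the construction inside Lemma~\ref{l:ExistenceCogenerator} presents $\overline u$ as a well-ordered colimit $\overline u=\displaystyle\lim_{\longrightarrow} u_\alpha$, and by checking that the kernel condition required by Lemma~\ref{l:monomorphism} holds at each successor stage (this is precisely what the stability property encodes along the chain), I would conclude that $h$ is a monomorphism on all of $F$. For surjectivity I would apply Lemma~\ref{l:epim} to suitable monomorphism pairs extracted from $h$, its iterates, and $\overline f$, using the containment of images forced by the fact that $h$ fixes $\overline u$ (hence fixes $\Img\overline u\subseteq \Img\overline f$ pointwise). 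If a single application of Lemma~\ref{l:ExistenceCogenerator} is insufficient, the whole argument is iterated transfinitely: at limit stages one takes colimits, which remain cogenerators and remain $\mathcal I$-inflations because $\mathcal I$ is closed under well-ordered limits and $\mathcal A$ satisfies AB5; the iteration terminates by local smallness, as in the proof of Lemma~\ref{l:ExistenceCogenerator} itself. The stable cogenerator produced is then $v:X\to F$.
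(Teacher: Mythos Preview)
Your plan has the right ingredients but assembles them in the wrong order, and the resulting argument has a genuine gap at the point you yourself flag as the ``main obstacle.''

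A single application of Lemma~\ref{l:ExistenceCogenerator} does \emph{not} give an object to which Lemma~\ref{l:monomorphism} can be applied. The stability property $\Ker(f'\overline f)=\Ker\overline f$ controls $f'$ only on the image of the single map $\overline f:E\to\overline E$; it says nothing about $\Ker f'$ on the rest of $\overline E$. Your claim that ``the construction inside Lemma~\ref{l:ExistenceCogenerator} presents $\overline u$ as a well-ordered colimit'' on which Lemma~\ref{l:monomorphism} can be run is unfounded: that lemma is proved by contradiction and does not exhibit $\overline u$ as a colimit with the successor-kernel condition that Lemma~\ref{l:monomorphism} requires. The paper's fix is to iterate Lemma~\ref{l:ExistenceCogenerator} countably many times, obtaining a chain $u_0\to u_1\to u_2\to\cdots$ of cogenerators with $\Ker(f'f_{n+1,n})=\Ker f_{n+1,n}$ for every $f':u_{n+1}\to u'$ into a cogenerator, and then takes $w=\varinjlim_n u_n$. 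Now $w$ \emph{is} presented as an $\omega$-colimit whose successor steps satisfy exactly the hypothesis of Lemma~\ref{l:monomorphism}, and one concludes that \emph{every} morphism from $w$ to a cogenerator is a monomorphism. Your fallback ``iterate transfinitely'' gestures toward this, but you do not articulate why $\omega$ steps suffice or how the iteration produces the input to Lemma~\ref{l:monomorphism}.

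Your surjectivity sketch is also too vague to succeed. Once one knows that any $f:w\to w$ with $fw=w$ is monic, the paper assumes $f$ is not epic and builds, by transfinite recursion on \emph{all} ordinals, monomorphisms $f_{\alpha\beta}$ with $f_{\beta+1,\beta}=f$; at limit stages one takes the colimit and uses that $w$ is a cogenerator to map back into $w$. Lemma~\ref{l:epim} is then applied at successor stages to show that the images $\Img f_{\alpha+1,\beta}$ form a strictly increasing chain of subobjects of $F$, contradicting local smallness. Your proposal to ``apply Lemma~\ref{l:epim} to suitable monomorphism pairs'' does not explain which pairs, nor how a contradiction with local smallness is reached. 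Finally, your argument for $\mathcal I$-injectivity of $F$ via pushouts assumes that $\mathcal I$ is closed under pushouts, which is not among the hypotheses; once minimality of $w$ is established, $\mathcal I$-injectivity follows more cheaply by noting that the cogenerator maps $w\to u$ and $u\to w$ compose to an automorphism of $F$, so $F$ is a direct summand of the $\mathcal I$-injective $E$.
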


\begin{proof}
  Note that $u$ is a cogenerator in
  $X \downarrow_{\mathcal I} \mathcal A$ since $E$ is
  $\mathcal I$-injective. First of all, by setting $u_0=u$, we can
  apply recursively the preceding lemma to get, for each $n < \omega$,
  a cogenerator $u_n$ in $X \downarrow_{\mathcal I} \mathcal A$ and a
  morphism $f_{n+1,n}:u_n \rightarrow u_{n+1}$ such that any other
  morphism $f':u_{n+1} \rightarrow u'$ with $u'$ a cogenerator
  satisfies $\Ker f'f_{n+1,n} = \Ker f_{n+1,n}$. Let
  $\displaystyle w=\lim_{\substack{\longrightarrow\\n < \omega}} u_n$
  and note that $w$ is a cogenerator in
  $X \downarrow_{\mathcal I} \mathcal A$. Since any
  $f':w \rightarrow u'$ with $u'$ a cogenerator satisfies, for each
  natural number $n$, that $\Ker(f'f_n) = \Ker(f_{n+1,n})$, where
  $f_n$ is the canonical morphism of the direct limit, Lemma \ref{l:monomorphism} says that any such $f'$ is actually a monomorphism.

  Suppose that the cogenerator $w$ is of the form $w:X \rightarrow F$, and let us prove that $w$ is a minimal morphism, that is, that any
  $f:w \rightarrow w$ is an isomorphism. Let $f:w \rightarrow w$ be a
  morphism and assume that $f$ is not an isomorphism. Since it is
  monic, by the previous claim, $f$ is not an epimorphism. Now we can
  construct, by transfinite recursion, a monomorphism
  $f_{\alpha\beta}:w_\beta \rightarrow w_\alpha$ for each $\beta < \alpha$, where $w_\alpha=w$
  if $\alpha$ is successor and, otherwise,
  $\displaystyle w_\alpha = \lim_{\substack{\longrightarrow\\ \gamma <
      \alpha}}w_\gamma$, such that $f_{\alpha\beta} = f$ if
  $\alpha = \beta+1$. Cases $\alpha=0$ and $\alpha$, a successor are
  easy. If $\alpha$ is a limit ordinal, set
  $\displaystyle w_\alpha = \lim_{\substack{\longrightarrow\\ \gamma <
      \alpha}} w_\gamma$ with structural maps
  $f'_{\alpha\gamma}:w \rightarrow w_\alpha$ for each
  $\gamma < \alpha$. Since $w$ is a cogenerator, there exists
  $f'_\alpha:w_\alpha \rightarrow w$. Then define
  $f_{\alpha\beta}=f'_\alpha f'_{\alpha \beta}$.

  Now we prove that for each ordinal $\alpha$,
  $\{\Img f_{\alpha\beta}:\beta < \alpha+1\}$ is a strictly ascending
  chain of subobjects of $F$, which is a contradiction. Take
  $\beta < \alpha+1$ and suppose that
  $\Img f_{\beta+1,\alpha+1} = \Img f_{\beta,\alpha+1}$. Since
  $f_{\beta,\alpha+1} = f_{\beta+1,\alpha+1}f$, Lemma \ref{l:epim}
  implies that $f_{\beta+1,\alpha+1}$ is an epimorphism. But
  $f_{\beta+1,\alpha+1} = ff_{\beta+1,\alpha}$ so that $f$ is an
  epimorphism as well. This contradicts the previous hypothesis and $f$ has to be an isomorphism.
\end{proof}

\begin{cor}
  Let $\mathcal A$ be a locally small abelian category satisfying AB5, $\mathcal E$ be the abelian exact structure of $\mathcal A$,
  and $\mathcal F$ an additive exact substructure of
  $\mathcal E$ which is closed under well ordered limits. Let $X$ be
  any object of $\mathcal A$ such that there exists an
  $\mathcal F$-inflation $u:X \rightarrow E$ with $E$, an
  $\mathcal F$-injective object. Then $X$ has a $\mathcal F$-injective envelope.
  Moreover, this $\mathcal F$-injective
  envelope is an $\mathcal F$-injective hull as well.
\end{cor}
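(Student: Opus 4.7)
The plan is to package together Theorem~\ref{t:ExistenceHulls} and Theorem~\ref{t:InjectiveHulls}: the former produces a minimal $\mathcal F$-inflation into an $\mathcal F$-injective object (which is by definition an $\mathcal F$-injective envelope), while the latter upgrades such an envelope to a hull once the relevant hypotheses are in place.

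First, I would apply Theorem~\ref{t:ExistenceHulls} with $\mathcal I$ taken to be the class of $\mathcal F$-conflations (equivalently, the class of $\mathcal F$-inflations determined by $\mathcal F$ as an exact substructure of $\mathcal E$). The hypotheses of that theorem transfer directly: $\mathcal A$ is locally small and satisfies AB5 by assumption; $\mathcal F$ is closed under well ordered limits by hypothesis; and by assumption $X$ admits an $\mathcal F$-inflation $u:X\rightarrow E$ into an $\mathcal F$-injective object, so the setup of the theorem is satisfied. Theorem~\ref{t:ExistenceHulls} then yields an $\mathcal F$-inflation $v:X\rightarrow F$ with $F$ an $\mathcal F$-injective object such that $v$ is a minimal morphism, i.e.\ any endomorphism $g:F\rightarrow F$ with $gv=v$ is an isomorphism. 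Since $F$ is $\mathcal F$-injective, every morphism from $X$ into an $\mathcal F$-injective object factors through $v$, so $v$ is an $\mathcal F$-injective preenvelope; combined with minimality this gives an $\mathcal F$-injective envelope.

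Second, to promote $v$ to an $\mathcal F$-injective hull I would invoke the implication $(4)\Rightarrow(1)$ in Theorem~\ref{t:InjectiveHulls}. The two additional hypotheses required there hold in the present setting: the $\mathcal F$-inflation $v$ has a cokernel because $\mathcal A$ is abelian and $v$ sits at the left of an $\mathcal F$-conflation, and the existence of some $\mathcal F$-inflation from $X$ into an $\mathcal F$-injective object is precisely the standing hypothesis on $X$. Hence $v$ is an $\mathcal F$-essential extension with $F$ an $\mathcal F$-injective object, which is exactly the definition of an $\mathcal F$-injective hull.

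Since every ingredient has been set up in the previous sections, I do not expect a genuine obstacle; the only bookkeeping is checking that $\mathcal F$ (which is given as an exact substructure) supplies a class $\mathcal I$ of conflations in the sense required by Theorem~\ref{t:ExistenceHulls} and that the class of $\mathcal F$-inflations is closed under the well ordered direct limits used in that construction, both of which follow from the hypothesis that $\mathcal F$ is closed under well ordered limits.
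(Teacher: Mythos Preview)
Your proposal is correct and follows essentially the same approach as the paper: apply Theorem~\ref{t:ExistenceHulls} with $\mathcal I=\mathcal F$ to obtain a minimal $\mathcal F$-inflation into an $\mathcal F$-injective object (an $\mathcal F$-injective envelope), and then invoke Theorem~\ref{t:InjectiveHulls} to conclude that this envelope is an $\mathcal F$-injective hull. The paper's proof is simply a two-line sketch of exactly this argument.
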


\begin{proof}
Follows immediately from the previous result. By Theorem \ref{t:InjectiveHulls}, every $\mathcal F$-injective envelope actually is a $\mathcal F$-injective hull.
\end{proof}

\section{Applications}

\noindent In this section we give several applications of the results obtained in the previous sections.

\subsection{Approximations in exact categories}

In the recent years several papers studying approximations in exact categories have appeared in the literature. There are two ways of defining approximations in a category $\mathcal D$. The first of them takes a fixed class of objects $\mathcal X$ and is based on the notions of $\mathcal X$-preenvelope and $\mathcal X$-precover defined at the beginning of Section 3. These are the approximations widely studied for module categories and the ones extended in \cite{SaorinStovicek11} to exact categories.

The other way of defining approximations takes an ideal $\mathcal I$ in the category (that is, a subfunctor of the $\Hom_{\mathcal D}$ bifunctor) and is based on the notion of $\mathcal I$-preenvelopes and $\mathcal I$-precovers (recall that a $\mathcal I$-preenvelope of an object $D$ of $\mathcal D$ is a morphism $i:D \rightarrow X$ that belongs to $\mathcal I$ and such that for any other morphism $j:D \rightarrow Y$, there exists $f:X \rightarrow Y$ with $fj=i$; the $\mathcal I$-precovers are defined dually). This is the approach of \cite{FuGuilHerzogTorrecillas}.

In this paper, we are going to apply the results of the previous sections in the study of approximations by objects. As a direct consequence of Theorem \ref{t:ExistenceInjectives} we get that the class of injective objects with respect to certain sets of inflations provide for preenvelopes. We shall use the following lemma for the
exact structure $\mathcal E^{\mathcal X}$ where $\mathcal X$ is a
class of objects.

\begin{lem}\label{l:TransfiniteCompositions}
  Suppose that transfinite compositions of inflations in $\mathcal E$ exist and are inflations and let $\mathcal X$ be any class of objects. Then transfinite compositions of
  $\mathcal E^{\mathcal X}$-inflations exist and are $\mathcal E^{\mathcal X}$-inflations.
\end{lem}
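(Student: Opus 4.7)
The plan is to verify that the transfinite composition $j_0\colon X_0 \to X := \varinjlim X_\alpha$ of a $\lambda$-sequence $(X_\alpha, i_{\beta\alpha})$ of $\mathcal E^{\mathcal X}$-inflations is again an $\mathcal E^{\mathcal X}$-inflation. Since every $\mathcal E^{\mathcal X}$-inflation is in particular an $\mathcal E$-inflation, the hypothesis that transfinite compositions of $\mathcal E$-inflations exist and are $\mathcal E$-inflations immediately gives the existence of the colimit and that $j_0$ is an $\mathcal E$-inflation. Let $X_0 \xrightarrow{j_0} X \xrightarrow{p} C$ be the associated conflation. The content of the lemma is then to show that this conflation is $\Hom(-,Y)$-exact for every $Y \in \mathcal X$, i.e.\ that every $f\colon X_0 \to Y$ factors through $j_0$.

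To produce such a factorisation I would construct, by transfinite recursion on $\alpha < \lambda$, a compatible family of morphisms $g_\alpha \colon X_\alpha \to Y$ satisfying $g_\alpha i_{\alpha 0} = f$ and $g_\beta i_{\beta\alpha} = g_\alpha$ for all $\alpha < \beta < \lambda$. Set $g_0 = f$. At a successor step $\alpha+1$, use that $i_{\alpha+1,\alpha}\colon X_\alpha \to X_{\alpha+1}$ is an $\mathcal E^{\mathcal X}$-inflation and that $Y \in \mathcal X$, so the induced map $\Hom(X_{\alpha+1},Y) \to \Hom(X_\alpha,Y)$ is surjective, to pick $g_{\alpha+1}$ with $g_{\alpha+1} i_{\alpha+1,\alpha} = g_\alpha$. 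At a limit ordinal $\beta$, the continuity clause in the definition of a $\lambda$-sequence gives $X_\beta \cong \varinjlim_{\gamma<\beta} X_\gamma$, and the compatibility of the previously constructed $g_\gamma$ ($\gamma<\beta$) yields, by the universal property of the colimit, a unique $g_\beta\colon X_\beta \to Y$ with $g_\beta i_{\beta\gamma} = g_\gamma$ for all $\gamma<\beta$. The compatible system $\{g_\alpha\}_{\alpha<\lambda}$ finally induces, again by the universal property of $X = \varinjlim X_\alpha$, a morphism $g\colon X \to Y$ with $g j_0 = g_0 = f$, as required.

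No serious obstacle is expected: the exact substructure $\mathcal E^{\mathcal X}$ is designed precisely so that the successor step works, the continuity of the $\lambda$-sequence handles the limit step, and the hypothesis on $\mathcal E$ ensures that the colimit actually is an $\mathcal E$-conflation so that asking for $\Hom(-,Y)$-exactness makes sense. The whole argument is essentially transfinite bookkeeping with the universal property of colimits; the one mild point worth stressing in the write-up is that continuity at limit ordinals is an intrinsic (categorical) property of the $\lambda$-sequence and is independent of which exact structure we are working in, so the same colimit $X$ plays both the role of the $\mathcal E$-transfinite composition and of the $\mathcal E^{\mathcal X}$-transfinite composition.
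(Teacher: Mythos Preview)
Your proposal is correct and follows essentially the same approach as the paper: construct by transfinite recursion a compatible cocone $(g_\alpha\colon X_\alpha\to Y)_{\alpha<\lambda}$ extending $f$, using the $\mathcal E^{\mathcal X}$-inflation property at successors and continuity at limits, then pass to the colimit. The paper's proof is terser (it simply asserts that one can construct the direct system of morphisms), while you spell out the successor and limit steps and make explicit that the underlying colimit is an $\mathcal E$-inflation by hypothesis; these elaborations are welcome but do not change the argument.
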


\begin{proof}
  Let $(Y_\alpha,u_{\beta\alpha})_{\alpha < \beta < \kappa}$ be a
  direct system of objects indexed by an ordinal $\kappa$, such that $u_{\beta\alpha}$ is an
  $\mathcal E^{\mathcal X}$-inflation for each $\alpha < \beta < \kappa$. Denote by
  $\displaystyle u_\alpha:Y_\alpha \rightarrow
  \lim_{\longrightarrow}Y_\beta$ the canonical morphism for each
  $\alpha < \kappa$. Given any $X \in \mathcal X$ and any
  $f:Y_0 \rightarrow X$ we can construct, using that $u_{\beta\alpha}$ is an
  $\mathcal E^{\mathcal X}$-inflation for each $\alpha < \beta < \kappa$, a direct system of morphisms, $(f_\alpha:Y_\alpha \rightarrow X)_{\alpha < \kappa}$, with $f_0 = f$. Then the induced morphism $g:\varinjlim_{\beta < \kappa}Y_\beta \rightarrow X$ satisfies $gu_0 = f$. This means that $u_0$ is an $\mathcal E^{\mathcal X}$-inflation.
\end{proof}

\begin{cor}
Suppose that transfinite compositions of inflations in $\mathcal A$ exist and are inflations. Let $\mathcal H$ be a set of inflations such that for each $i:K \rightarrow H$ in $\mathcal H$, $K$ is small. Let $\mathcal X$ be the class of all $\mathcal H$-injective objects. Then each object in $\mathcal A$ has a $\mathcal X$-preenvelope.
\end{cor}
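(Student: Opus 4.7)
The plan is to apply Theorem \ref{t:ExistenceInjectives} to the exact substructure $\mathcal E^{\mathcal X}$, after which the conclusion will follow almost directly from the defining property of $\mathcal E^{\mathcal X}$.

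First, I would verify the two hypotheses of Theorem \ref{t:ExistenceInjectives} for the exact category $(\mathcal A; \mathcal E^{\mathcal X})$. Hypothesis (1), that transfinite compositions of $\mathcal E^{\mathcal X}$-inflations exist and are $\mathcal E^{\mathcal X}$-inflations, follows directly from Lemma \ref{l:TransfiniteCompositions} together with the standing assumption on $\mathcal A$. For hypothesis (2), I would take the very family $\mathcal H$ from the statement. Three things must be checked: (a) each $i:K \rightarrow H$ in $\mathcal H$ is an $\mathcal E^{\mathcal X}$-inflation; (b) each such $K$ is small in the $\mathcal E^{\mathcal X}$-sense; and (c) every $\mathcal H$-injective object is $\mathcal E^{\mathcal X}$-injective.

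For (a), since every $X \in \mathcal X$ is $\mathcal H$-injective by the definition of $\mathcal X$, the functor $\Hom_{\mathcal A}(-, X)$ is exact on the conflation containing $i$ for each $X\in\mathcal X$, so $i$ is an $\mathcal E^{\mathcal X}$-inflation. For (b), the class of $\mathcal E^{\mathcal X}$-inflations is contained in the class of inflations of $\mathcal E$, so smallness with respect to $\mathcal E$ passes automatically to smallness with respect to $\mathcal E^{\mathcal X}$. For (c), an $\mathcal H$-injective object is, by definition of $\mathcal X$, an element of $\mathcal X$; and by definition of $\mathcal E^{\mathcal X}$, every $X \in \mathcal X$ makes $\Hom_{\mathcal A}(-, X)$ exact on all $\mathcal E^{\mathcal X}$-conflations, i.e.\ is $\mathcal E^{\mathcal X}$-injective.

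Having verified both hypotheses, Theorem \ref{t:ExistenceInjectives} yields, for each object $M$, an $\mathcal E^{\mathcal X}$-inflation $u:M \rightarrow E$ with $E$ an $\mathcal E^{\mathcal X}$-injective object. To conclude that $u$ is an $\mathcal X$-preenvelope, note first that by (a) each morphism in $\mathcal H$ is an $\mathcal E^{\mathcal X}$-inflation, hence $\mathcal E^{\mathcal X}$-injectivity of $E$ implies $\mathcal H$-injectivity, i.e.\ $E \in \mathcal X$. For any morphism $f:M \rightarrow Y$ with $Y \in \mathcal X$, the fact that $u$ is an $\mathcal E^{\mathcal X}$-inflation and $Y \in \mathcal X$ makes $\Hom_{\mathcal A}(-, Y)$ exact on the associated conflation, so $\Hom_{\mathcal A}(E, Y) \rightarrow \Hom_{\mathcal A}(M, Y)$ is surjective; any preimage of $f$ yields the desired factorization.

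I do not expect any substantive obstacle, since all of the real work has been packaged into Theorem \ref{t:ExistenceInjectives} and Lemma \ref{l:TransfiniteCompositions}; the only steps requiring mild care are recognizing that the smallness hypothesis on each $K$ transfers automatically from $\mathcal E$ to the smaller class $\mathcal E^{\mathcal X}$, and that $\mathcal X$ coincides with the class of $\mathcal E^{\mathcal X}$-injective objects.
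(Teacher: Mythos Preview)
Your proof is correct and follows essentially the same approach as the paper: apply Theorem~\ref{t:ExistenceInjectives} to the exact substructure $\mathcal E^{\mathcal X}$, using Lemma~\ref{l:TransfiniteCompositions} for hypothesis (1) and the set $\mathcal H$ for hypothesis (2), and then observe that the resulting $\mathcal E^{\mathcal X}$-inflation into an $\mathcal E^{\mathcal X}$-injective object is an $\mathcal X$-preenvelope. You spell out more carefully than the paper does the verifications that $\mathcal H \subseteq \mathcal E^{\mathcal X}$, that smallness descends from $\mathcal E$ to $\mathcal E^{\mathcal X}$, and that $\mathcal X$ coincides with the class of $\mathcal E^{\mathcal X}$-injectives, but these are exactly the checks the paper leaves implicit.
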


\begin{proof}
By Lemma \ref{l:TransfiniteCompositions}, the exact category $(\mathcal A; \mathcal E^{\mathcal X})$ satisfy the hypotheses of Theorem \ref{t:ExistenceInjectives}, since an object is $\mathcal E^{\mathcal X}$-injective if and only if it is $\mathcal H$-injective. Then, for each object $A$ of $\mathcal A$, there exists an $\mathcal E^{\mathcal X}$-inflation $i:A \rightarrow E$ with $E$ an $\mathcal E^{\mathcal X}$-injective object. But any $\mathcal E^{\mathcal X}$-injective object actually belongs to $\mathcal X$ (since morphisms in $\mathcal H$ are $\mathcal E^{\mathcal X}$-inflations), so that $i$ is a $\mathcal X$-preenvelope.
\end{proof}

One situation that fits the hypotheses of the preceding result is when we take, in the category of right modules over a unitary ring $R$,
$\mathcal H$ to be the set of all conflations
$K \rightarrow R^n \rightarrow L$ with $n$ a natural number and $K$
finitely generated. In this case, the class $\mathcal X$ consists of
all fp-injective modules. The preceding results gives that every
module has an fp-injective preenvelope (see \cite[Theorem
4.1.6]{GobelTrlifaj}).

\begin{cor} \label{fp-inj}
  Let $R$ be a ring. Then every module has an fp-injective preenvelope.
\end{cor}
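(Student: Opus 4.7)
The plan is to apply the immediately preceding corollary to $\mathcal A = \ModR$ equipped with the abelian exact structure $\mathcal E$, for a carefully chosen set of inflations $\mathcal H$. First I would note that transfinite compositions of inflations (i.e., monomorphisms) in $\ModR$ exist and are again monomorphisms; this is classical, since $\ModR$ is a Grothendieck (AB5) category. Hence the first hypothesis of the preceding corollary is satisfied.

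Next, take $\mathcal H$ to be a set of representatives, one per isomorphism class, of all inclusions $i\colon K \hookrightarrow R^n$, where $n$ is a natural number and $K$ is a finitely generated submodule of $R^n$. This is indeed a set, since for each fixed $n$ the finitely generated submodules of $R^n$ form a set. The corresponding cokernels $R^n/K$ exhaust the isomorphism classes of finitely presented right $R$-modules. Each domain $K$ is finitely generated, so $\Hom_R(K,-)$ preserves direct limits of monomorphism-systems; by Lemma~\ref{l:PreserveLimits} this shows that $K$ is $\aleph_0$-small, and hence small in the sense of the paper.

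The key identification is that the class $\mathcal X$ of $\mathcal H$-injective modules coincides with the class of fp-injective modules. By definition, $M$ is fp-injective iff $\Ext(F,M)=0$ for every finitely presented $F$; writing $F = R^n/K$ with $K$ finitely generated and invoking the long exact sequence associated to $0 \to K \to R^n \to F \to 0$ together with the projectivity of $R^n$, this vanishing is equivalent to the extendibility of every morphism $K \to M$ along $K \hookrightarrow R^n$, which is precisely $\mathcal H$-injectivity.

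Putting this together, the preceding corollary directly yields that every $R$-module admits an $\mathcal X$-preenvelope, which by the identification above is an fp-injective preenvelope. The only subtlety worth flagging is the verification that finitely generated modules are small in the technical sense demanded by the paper, but both conditions of Lemma~\ref{l:PreserveLimits} are straightforward: a morphism from a finitely generated $K$ into the direct limit of a monomorphism system has finitely generated image and thus factors through some term of the system, while the second condition follows automatically from injectivity of the transition maps.
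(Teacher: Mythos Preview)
Your proposal is correct and follows essentially the same route as the paper: the text immediately preceding the corollary already specifies $\mathcal H$ as the set of conflations $K \rightarrow R^n \rightarrow L$ with $n$ a natural number and $K$ finitely generated, and observes that the resulting class $\mathcal X$ is exactly the fp-injective modules, so the preceding corollary applies. Your added verification that finitely generated domains are $\aleph_0$-small (via Lemma~\ref{l:PreserveLimits}, using that the transition maps are monomorphisms) is exactly the detail the paper leaves implicit.
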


Maybe, the most general result regarding approximations in exact categories is \cite[Theorem 2.13(4)]{SaorinStovicek11}. We see that this result can be deduced from our Theorem \ref{t:ExistenceInjectives}.

Let $\mathcal I$ be a set of inflations and denote by $\Coker(\mathcal I)$ the class consisting of all cokernels of morphism in $\mathcal I$. Recall that $\mathcal I$ is called
\textit{homological} \cite[Definition 2.3]{SaorinStovicek11} if the
following two conditions are equivalent for any object $T$:
\begin{enumerate}
\item $T \in \Coker(\mathcal I)^\perp$.

\item $T$ is $\mathcal I$-injective.
\end{enumerate}

\begin{cor}
  Suppose that transfinite compositions of inflations exist and are
  inflations. Let $\mathcal I$ be a homological set of inflations such
  that, for each $i:K \rightarrow L$ in $\mathcal I$, $K$ is
  small. Then $\Coker(\mathcal I)^\perp$ is preenveloping.
\end{cor}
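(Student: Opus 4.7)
The plan is to apply Theorem \ref{t:ExistenceInjectives} to the exact substructure $\mathcal E^{\mathcal X}$ of $\mathcal E$, where $\mathcal X=\Coker(\mathcal I)^\perp$. Once we obtain, for every object $A$, an $\mathcal E^{\mathcal X}$-inflation $i\colon A\rightarrow E$ with $E\in\mathcal X$, this $i$ is automatically an $\mathcal X$-preenvelope: by the very definition of $\mathcal E^{\mathcal X}$ the functor $\Hom_{\mathcal A}(-,X)$ carries any $\mathcal E^{\mathcal X}$-inflation to an epimorphism for every $X\in\mathcal X$, so every map $A\rightarrow X$ with $X\in\mathcal X$ factors through $i$. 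It therefore suffices to check the two hypotheses of Theorem \ref{t:ExistenceInjectives} for the exact category $(\mathcal A;\mathcal E^{\mathcal X})$.

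Hypothesis (1) is supplied by Lemma \ref{l:TransfiniteCompositions}. For hypothesis (2) I take $\mathcal H=\mathcal I$ itself. Each $i\colon K\rightarrow L$ in $\mathcal I$ is indeed an $\mathcal E^{\mathcal X}$-inflation: applying $\Hom_{\mathcal A}(-,X)$ to the conflation $K\rightarrow L\rightarrow\Coker i$ produces a long exact sequence whose connecting map lands in $\Ext(\Coker i,X)=0$ for every $X\in\mathcal X$, so the conflation lies in $\mathcal E^{\mathcal X}$. The domains $K$ are small in the ambient structure $\mathcal E$ by hypothesis and remain small in $\mathcal E^{\mathcal X}$, because every $\mathcal E^{\mathcal X}$-inflation is in particular an $\mathcal E$-inflation, so any $\lambda$-sequence of $\mathcal E^{\mathcal X}$-inflations is also a $\lambda$-sequence of $\mathcal E$-inflations. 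Finally, I must verify that any object $T$ which is $\mathcal H$-injective in the new exact category is $\mathcal E^{\mathcal X}$-injective: such a $T$ is $\mathcal I$-injective in the original sense, and here I invoke the homological hypothesis to conclude $T\in\Coker(\mathcal I)^\perp=\mathcal X$; but any object of $\mathcal X$ is $\mathcal E^{\mathcal X}$-injective by the very definition of $\mathcal E^{\mathcal X}$.

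Theorem \ref{t:ExistenceInjectives} then delivers, for each $A\in\mathcal A$, an $\mathcal E^{\mathcal X}$-inflation $A\rightarrow E$ with $E$ an $\mathcal E^{\mathcal X}$-injective object; by the identification above, $E\in\mathcal X$, and by the opening observation this inflation is an $\mathcal X$-preenvelope. There is no genuine obstacle here; the only delicate point is the bookkeeping that collapses three a priori distinct notions of injectivity—with respect to $\mathcal I$, with respect to $\mathcal E^{\mathcal X}$, and membership in $\mathcal X$—which is exactly what the homological assumption on $\mathcal I$ accomplishes.
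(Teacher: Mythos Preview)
Your proof is correct and follows essentially the same strategy as the paper: apply Theorem~\ref{t:ExistenceInjectives} to the exact substructure $\mathcal E^{\mathcal X}$ with $\mathcal X=\Coker(\mathcal I)^\perp$, using Lemma~\ref{l:TransfiniteCompositions} for hypothesis~(1) and the homological assumption to identify $\mathcal I$-injectivity, $\mathcal E^{\mathcal X}$-injectivity, and membership in $\mathcal X$. Your write-up is in fact more explicit than the paper's---you spell out why $\mathcal I\subseteq\mathcal E^{\mathcal X}$, why smallness in $\mathcal E$ passes to $\mathcal E^{\mathcal X}$, and why the $\mathcal E^{\mathcal X}$-injective object produced by the theorem lies in $\mathcal X$---whereas the paper compresses all of this into a single sentence asserting the three-way equivalence of injectivity notions.
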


\begin{proof}
  Denote $\Coker(\mathcal I)^\perp$ by $\mathcal S$. Note that, since $\mathcal I$ is homological, an object belongs to $\mathcal S$ if
  and only if it is $\mathcal I$-injective, if and only if it is $\mathcal E^{\mathcal S}$-injective, so that the
  result is equivalent to prove that there exists enough
  $\mathcal E^{\mathcal S}$-injectives. But by Lemma
  \ref{l:TransfiniteCompositions}, transfinite compositions of
  $\mathcal E^{\mathcal S}$-inflations exist and are $\mathcal E^{\mathcal S}$-inflations and an object is
  $\mathcal E^{\mathcal S}$-injective (equivalently, belongs to
  $\mathcal S$) if and only if it is $\mathcal I$-injective. Then the
  existence of $\mathcal E^{\mathcal S}$-injectives follows from
  Theorem \ref{t:ExistenceInjectives}.
\end{proof}

\subsection{Approximations in Grothendieck categories}

In this subsection, $\mathcal D$ will be a Grothendieck category with the abelian exact structure. The first application of our results is the existence of injective hulls in $\mathcal D$.

\begin{cor} \label{Grothendieck}
Every object in the Grothendieck category $\mathcal D$ has an injective hull.
\end{cor}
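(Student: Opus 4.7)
The plan is to deduce this from Theorem \ref{t:ExistenceInjectives} and the corollary following Theorem \ref{t:ExistenceHulls}, both applied with $\mathcal{F}=\mathcal{E}$ taken to be the abelian exact structure on $\mathcal{D}$. In that setting inflations are exactly monomorphisms, $\mathcal{E}$-injective objects are exactly injective objects, and $\mathcal{E}$-essential/$\mathcal{E}$-small extensions translate directly into the classical notions.

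First I verify the hypotheses of Theorem \ref{t:ExistenceInjectives}. Since $\mathcal{D}$ is Grothendieck it satisfies AB5, and in an AB5 category directed colimits of monomorphisms are monomorphisms; hence transfinite compositions of inflations exist and are inflations, giving condition (1). For condition (2), let $G$ be a generator of $\mathcal{D}$. Baer's criterion in Grothendieck categories says that an object $E$ is injective if and only if every morphism from a subobject $U$ of $G$ into $E$ extends to $G$; equivalently, $E$ is injective whenever it is $\mathcal{H}$-injective, where $\mathcal{H}=\{u_i\colon U_i \hookrightarrow G\}$ ranges over all subobjects of $G$. Since $\mathcal{D}$ is locally small, $\mathcal{H}$ is a set. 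Moreover, in a Grothendieck category every object is $\kappa$-presentable for some cardinal $\kappa$ (a standard consequence of Gabriel--Popescu together with AB5), and in particular every $U_i$ is small in the sense of Section 4. Thus Theorem \ref{t:ExistenceInjectives} applies and produces, for each object $X$ of $\mathcal{D}$, a monomorphism $u\colon X \to E$ with $E$ injective.

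Now I invoke the corollary following Theorem \ref{t:ExistenceHulls}. Its hypotheses require that $\mathcal{D}$ is locally small abelian with AB5 (all true), that $\mathcal{F}$ is an additive exact substructure of $\mathcal{E}$ closed under well-ordered direct limits, and that there is an $\mathcal{F}$-inflation from $X$ into an $\mathcal{F}$-injective object. Taking $\mathcal{F}=\mathcal{E}$, the first two conditions are immediate since AB5 says that direct limits of short exact sequences are short exact, so $\mathcal{E}$ itself is closed under well-ordered direct limits. The third condition is precisely the output of the previous paragraph. The corollary therefore yields an $\mathcal{E}$-injective envelope $v\colon X \to F$ of $X$, which it also asserts is an $\mathcal{E}$-injective hull. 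In the abelian exact structure this is the desired injective hull.

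The only genuine subtlety is the smallness of the subobjects $U_i \subseteq G$: this is not built into the axioms of a Grothendieck category in this paper, and one has to appeal either to $\kappa$-presentability of every object of a Grothendieck category, or to a direct argument that, since any morphism $f\colon U_i \to \varinjlim Y_\alpha$ along a chain of monomorphisms $(Y_\alpha)$ in AB5 factors through some $Y_\alpha$ once one bounds the cardinality of $U_i$ via its presentation by copies of $G$. Once this smallness is in hand, the rest of the proof is a clean application of the theorems already established in Section 4.
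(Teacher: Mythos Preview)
Your proposal is correct and follows essentially the same approach as the paper: verify the hypotheses of Theorem~\ref{t:ExistenceInjectives} via AB5 and Baer's criterion for the generator $G$ (the paper packages this through Remark~\ref{r:Hyphoteses}), then apply Theorem~\ref{t:ExistenceHulls} (or its corollary) with $\mathcal{F}=\mathcal{E}$. Your treatment is in fact slightly more explicit than the paper's, which simply asserts that all objects in a Grothendieck category are small and that injective hulls follow directly from Theorem~\ref{t:ExistenceHulls}.
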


\begin{proof}
First we show that $\mathcal D$ satisfies the hypotheses of Theorem \ref{t:ExistenceInjectives} to prove that $\mathcal D$ has enough injectives. That transfinite
  composition of inflations are inflations follows from (AB5),
  \cite[Proposition V.1.1]{Stenstrom}. In order to see that $\mathcal D$
  satisfies (2) of Theorem \ref{t:ExistenceInjectives}, we shall see
  that it satisfies the conditions of Remark \ref{r:Hyphoteses}. First note
  that $\mathcal D$ is locally small by \cite[Proposition
  IV.6.6]{Stenstrom}. On the other hand, it is well known that all
  objects in a Grothendieck category are small and, if $G$ is a
  generator of $\mathcal D$, an object is $G$-injective if and only if it is
  injective by \cite[Proposition V.2.9]{Stenstrom}. Consequently, $\mathcal D$ has enough injectives by Theorem \ref{t:ExistenceInjectives}.
  
  Now, the existence of injective hulls in $\mathcal D$ is a direct consequence of Theorem \ref{t:ExistenceHulls}.
\end{proof}

Now, let us look at approximations in $\mathcal D$ by a class of objects. In many situations, the classes providing for approximations belong to a cotorsion pair. The relationship between cotorsion pairs and approximations in module categories was first observe by Salce in the late 1970s who proved that, if $(\mathcal B,\mathcal C)$ is a cotorsion pair, then the existence of special $\mathcal B$-precovers is equivalent to the existence of special $\mathcal C$-preenvelopes (a special $\mathcal B$-precover of a module $M$ is a morphism $f:B \rightarrow M$ with $B \in \mathcal B$ and $\Ker f \in \mathcal B^\perp$; the special $\mathcal C$-preenvelopes are defined dually). Later on, Enochs proved the important fact that a \textit{closed} (in the sense that $\mathcal B$ is closed under direct limits) and complete cotorsion pair provide minimal approximations: covers and envelopes. Finally, Eklof and Trlifaj proved that complete cotorsion pair are abundant: any cotorsion pair generated by a set is complete. All these works were motivated by the study of the existence of flat covers, the so-called ``Flat cover conjecture", solved by Bican, El Bashir and Enochs in \cite{BicanBashirEnochs}.

In this section we see that these results are consequences of our results in the previous sections. Recall that for a class $\mathcal X$ of objects we can form the cotorsion pair $(\mathcal X^\perp, {^\perp}(\mathcal X^\perp))$, which is called the cotorsion pair generated by $\mathcal X$. We say that a cotorsion pair $(\mathcal B,\mathcal C)$ is \textit{generated by a set} if there exists a set of objects $\mathcal S$ such that $(\mathcal B,\mathcal C)$ coincides with the cotorsion pair generated by $\mathcal S$. Moreover, we say that $(\mathcal B, \mathcal C)$ is \textit{closed} if $\mathcal B$ is closed under direct limits.

\begin{theorem}
Let $(\mathcal B,\mathcal C)$ be a cotorsion pair in $\mathcal D$.
\begin{enumerate}
\item If $\mathcal D$ has a projective generator and $(\mathcal B,\mathcal C)$ is cogenerated by a set, then $(\mathcal B,\mathcal C)$ is complete.

\item If $(\mathcal B,\mathcal C)$ is complete and closed then every object has a $\mathcal C$-envelope.
\end{enumerate}
\end{theorem}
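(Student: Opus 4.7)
The plan is to derive each part from the existence theorems of Section 4, using the duality between cotorsion pairs and suitable classes of inflations/conflations.

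For (1), let $\mathcal{S}$ be a set with $\mathcal{C} = \mathcal{S}^\perp$, and let $G$ be a projective generator of $\mathcal{D}$. For each $S \in \mathcal{S}$ I would choose an epimorphism $G^{(I_S)} \twoheadrightarrow S$, let $K_S$ be its kernel, and set $\mathcal{H} = \{K_S \hookrightarrow G^{(I_S)} : S \in \mathcal{S}\}$. Since $G^{(I_S)}$ is projective, an object $C$ is $\mathcal{H}$-injective if and only if $\Ext(S,C) = 0$ for every $S \in \mathcal{S}$, i.e.\ iff $C \in \mathcal{C}$. Every subobject of $G^{(I_S)}$ is small in the Grothendieck category $\mathcal{D}$, so Theorem \ref{t:ExistenceInjectives} applies and produces, for each object $A$, an inflation $A \to E$ with $E \in \mathcal{C}$. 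The crucial refinement is Remark \ref{r:StructurePreenvelope}: this inflation is the transfinite composition of a $\kappa$-sequence whose successor steps are pushouts of direct sums of maps in $\mathcal{H}$, so the successive cokernels are direct sums of objects of $\mathcal{S} \subseteq \mathcal{B}$. Since $\mathcal{B} = {^\perp}\mathcal{C}$ is closed under coproducts and, by Eklof's lemma (valid in any AB5 abelian category), under transfinite extensions, the cokernel of $A \to E$ lies in $\mathcal{B}$. This yields the first special conflation required for completeness; the second follows by Salce's trick, applying the just-constructed special preenvelope to the kernel of an epimorphism $G^{(I)} \twoheadrightarrow A$ and taking a pushout.

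For (2), I would apply Theorem \ref{t:ExistenceHulls} with $\mathcal{I}$ equal to the class of all conflations $A \to B \to C$ whose right-hand term $C$ lies in $\mathcal{B}$. One then verifies that (i) the $\mathcal{I}$-injective objects are exactly the objects of $\mathcal{C}$ (for the forward direction, given $C\in\mathcal{B}$ and an extension $E \to B \to C$, the inflation $E \to B$ belongs to $\mathcal{I}$, so $\mathcal{I}$-injectivity of $E$ splits the extension), (ii) $\mathcal{I}$ is closed under well-ordered direct limits, since $\mathcal{D}$ is AB5 and $\mathcal{B}$ is closed under direct limits, and (iii) completeness supplies, for each $X$, an $\mathcal{I}$-inflation $X \to E$ into an $\mathcal{I}$-injective object. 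Theorem \ref{t:ExistenceHulls} then delivers a minimal inflation $v : X \to F$ with $F \in \mathcal{C}$ and $\Coker v \in \mathcal{B}$. Because $\Ext(\Coker v, Y) = 0$ for $Y \in \mathcal{C}$, the map $\Hom(F, Y) \to \Hom(X, Y)$ is surjective, so $v$ is a $\mathcal{C}$-preenvelope; combined with the minimality produced by Theorem \ref{t:ExistenceHulls}, this gives a $\mathcal{C}$-envelope.

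The main obstacle is the first point of part (1): converting the abstract inflation into a $\mathcal{C}$-object produced by Theorem \ref{t:ExistenceInjectives} into a genuine special preenvelope whose cokernel lies in $\mathcal{B}$. This depends on carefully exploiting the transfinite structure recorded in Remark \ref{r:StructurePreenvelope} together with Eklof's lemma; all other verifications are bookkeeping. A minor secondary point in (2) is checking that the chosen $\mathcal{I}$ is actually closed under well-ordered direct limits in the strong sense required by Theorem \ref{t:ExistenceHulls}, but this is immediate from AB5 and the closedness hypothesis on $\mathcal{B}$.
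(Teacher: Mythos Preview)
Your approach is essentially the same as the paper's. For part~(1) the paper also chooses presentations $K_S \to P_S \to S$ with $P_S$ projective, applies Theorem~\ref{t:ExistenceInjectives}, and then uses Remark~\ref{r:StructurePreenvelope} together with Eklof's lemma to force the cokernel into~$\mathcal B$; for part~(2) it likewise takes $\mathcal I$ to be the inflations with cokernel in~$\mathcal B$ and invokes Theorem~\ref{t:ExistenceHulls}. One small point of precision: the paper makes explicit that Theorem~\ref{t:ExistenceInjectives} is being applied to the exact structure $\mathcal E^{\mathcal C}$ (so that ``injective'' means ``in~$\mathcal C$'' and hypothesis~(2) of that theorem is genuinely satisfied), whereas you invoke the theorem without naming the ambient exact structure; in the abelian structure hypothesis~(2) would fail since not every object of~$\mathcal C$ is injective. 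Your explicit mention of Salce's trick for the second special conflation, and your direct identification of $\mathcal I$-injectives with~$\mathcal C$ in part~(2), are mild improvements over the paper's more implicit treatment of those steps.
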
 

\begin{proof}
(1) We prove, using Theorem \ref{t:ExistenceInjectives}, that the exact structure $\mathcal E^{\mathcal C}$ has enough injective objects. First note that transfinite compositions of inflations are inflations by \cite[Proposition V.1.1]{Stenstrom}. Using Lemma \ref{l:TransfiniteCompositions} we deduce that transfinite compositions of $\mathcal E^{\mathcal C}$-inflations are $\mathcal E^{\mathcal C}$-inflations as well. Now, since $\mathcal D$ has a projective generator, for each $S \in \mathcal S$ there exists a conflation
\begin{displaymath}
\begin{tikzcd}
K_S \arrow{r}{i_S} & P_S \arrow{r}{p_S} & S
\end{tikzcd}
\end{displaymath}
with $P_S$ projective. Let $\mathcal H$ be the set $\{i_S:S \in \mathcal S\}$. As $\mathcal D$ is a Grothendieck category, $K_S$ is small for each $S \in \mathcal S$. Moreover, it is easy to show that $\mathcal H$ is contained in $\mathcal E^{\mathcal C}$, which implies that an object $M$ is $\mathcal E^{\mathcal C}$-injective if and only if it is $\mathcal H$-injective. We can apply Theorem \ref{t:ExistenceInjectives} to get that $\mathcal E^{\mathcal C}$ has enough injective objects. Then, noting that an object $M$ is $\mathcal H$-injective if and only if $M \in \mathcal C$, we conclude that any $\mathcal E^{\mathcal C}$-inflation $i:M \rightarrow E$ with $E$ a $\mathcal E^{\mathcal C}$-injective object actually is a $\mathcal C$-preenvelope. Consequently, $\mathcal C$ is preenveloping.

Now let us take the $\mathcal C$-preenvelope $i:M \rightarrow E$ of an object $M$ as constructed in Theorem \ref{t:ExistenceInjectives}. By Remark \ref{r:StructurePreenvelope}, there exists an infinite regular cardinal $\kappa$ and a $\kappa$-sequence $(P_\alpha,f_{\beta\alpha})_{\alpha<\beta<\kappa}$ such that $P_0=M$, $i$ is the transfinite composition of the sequence and, for each $\alpha < \kappa$, $f_{\alpha+1,\alpha}$ is a pushout of a direct sum of inflations belonging to $\mathcal H$. This last condition implies that $\Coker f_{\alpha+1,\alpha}$ is a direct sum of modules belonging to $\mathcal S$ and, consequently, belongs to $\mathcal B$. Now, for each $\alpha < \beta < \kappa$ we get the commutative diagram of conflations
\begin{displaymath}
\begin{tikzcd}
M \arrow{r}{f_{\alpha 0}} \arrow{d} & P_\alpha \arrow{r}{\overline f_{\alpha 0}} \arrow{d}{f_{\beta\alpha}} & \Coker f_{\alpha 0} \arrow{d}{\overline f_{\beta\alpha}}\\
M \arrow{r}{f_{\beta 0}} & P_\beta \arrow{r}{\overline f_{\beta 0}} & \Coker f_{\beta 0}\\
\end{tikzcd}
\end{displaymath}
whose direct limit is the conflation
\begin{displaymath}
\begin{tikzcd}
M \arrow{r}{i} & E \arrow{r}{p} & \Coker i
\end{tikzcd}
\end{displaymath}
In particular, we get that $\Coker i$ is the composition of the transfinite sequence $(\Coker f_{\alpha 0}, \overline f_{\beta\alpha})_{\alpha < \beta < \kappa}$. Using the snake lemma it is easily verified that $\Coker \overline{f}_{\alpha+1,\alpha} \cong \Coker f_{\alpha+1,\alpha} \in \mathcal C$, so that, by Eklof lemma \cite[Proposition 2.12]{SaorinStovicek11}, $\Coker i \in \mathcal B$ as well. This means that $i$ is a special $\mathcal C$-preenvelope and that the cotorsion pair is complete.

(2) Let $M$ be an object in $\mathcal D$. Using that the cotorsion pair is complete, there exists an inflation $i_M:M \rightarrow E$ with $E \in \mathcal C$ and $\Coker i_M \in \mathcal B$. Now denote by $\mathcal I$ the class of inflations $i:A \rightarrow B$ in $\mathcal E$ such that $\Coker i \in \mathcal  B$. Since $\mathcal B$ and $\mathcal E$ are closed under direct limits, then so is $\mathcal I$. Moreover, notice that $i_M \in \mathcal I$ and satisfies, by Corollary \ref{c:CotorsionPair}, that $E$ is $\mathcal I$-injective. Then we are in position to apply Theorem \ref{t:ExistenceHulls} to obtain a minimal inflation $j_M:M \rightarrow F$ in $\mathcal I$ with $\mathcal F$ an $\mathcal I$-injective object. Using that the cotorsion pair is complete there exists a conflation
\begin{displaymath}
\begin{tikzcd}
E \arrow{r}{u} & C \arrow{r} & B
\end{tikzcd}
\end{displaymath} 
with $C \in \mathcal C$ and $B \in \mathcal B$. In particular, $u \in \mathcal I$ and, as $E$ is $\mathcal I$-injective, this conflation is split. This means that $E \in \mathcal C$. Consequently, the inflation $j_M$ actually is a $\mathcal C$-envelope.
\end{proof}

\subsection{Pure-injective hulls in finitely accessible additive categories}

The notion of purity in module categories can be considered in general in finitely accessible additive categories. Let $\mathcal K$ be an additive category with direct limits. Recall that an object $F$ of $\mathcal K$ is \textit{finitely presented} if for each direct system of objects in $\mathcal K$, $(K_i,u_{ji})_{i < j \in I}$, the canonical morphism from $\varinjlim \Hom_{\mathcal K}(F,X_i) \rightarrow \Hom_{\mathcal K}\left(F,\varinjlim K_i\right)$ is an isomorphism. The category $\mathcal K$ is said to be finitely accessible if it has all direct limits and there exists a set $\mathcal S$ of finitely presented objects such that every object of $\mathcal K$ can be expressed as a direct limit of objects from $\mathcal S$.

Let $\mathcal K$ be a finitely accessible additive category. A kernel-cokernel pair in $\mathcal K$
\begin{displaymath}
\begin{tikzcd}
K \arrow{r}{i} & M \arrow{r}{p} & L
\end{tikzcd}
\end{displaymath}
is said to be pure if for each finitely presented module $P$, the sequence of abelian groups
\begin{displaymath}
\begin{tikzcd}
\Hom_{\mathcal K}(P,K) \arrow{r} & \Hom_{\mathcal K}(P,M) \arrow{r} & \Hom_{\mathcal K}(P,L)
\end{tikzcd}
\end{displaymath}
is exact. The class $\mathcal E_{\mathcal P}$ of all such kernel-cokernels pairs is an exact structure on $\mathcal K$, which we shall call the pure-exact structure. As in the case of modules, inflations, deflations, injectives and projectives with respect to this exact structure will be called pure-monomorphisms, pure-epimorphisms, pure-injectives and pure-projectives respectively. The main objective of this section is to apply the results of the previous one to study the existence of pure-injective hulls in $\mathcal K$.

It was proved in \cite{Crawley} that every finitely accessible additive category is equivalent to the full subcategory Flat-$\mathcal S$ of additive flat functors from a small preadditive category $\mathcal S$ to the category of abelian groups. Using that any functor category (additive functors from a small preadditive category to the category of abelian groups) is equivalent to the category of unitary modules over a ring $T$ with enough idempotents (that is, an associative ring without unit but with a family of pairwise orthogonal elements $\{e_i\mid i \in I\}$ such that $T=\oplus_{i \in I}Te_i = \oplus_{i \in I}e_iT$), we get that any finitely accessible additive category is equivalent to the full subcategory of flat modules over a ring with enough idempotents. More precisely, if $\mathcal K$ is a finitely accessible additive category and $\{F_i: i \in I\}$ is a representing set of the isomorphism classes of finitely presented objects of $\mathcal K$, and we denote by $F=\oplus_{i \in I}F_i$, then we consider $T=\widehat{\End}_\mathcal{K}(F)$ the subring of $\End_{\mathcal K}(F)$ consisting of all endomorphisms $f$ of $F$ such that $f(F_i)=0$ except for possibly finitely many indices $i \in I$. This ring, called the \textit{functor ring} of the family $\{F_i:i \in I\}$, is a ring with enough idempotents such that $\mathcal K$ is equivalent to the full subcategory of Mod-$T$ consisting of flat modules, in such a way that pure exact sequences in $\mathcal K$ corresponds to exact sequences in Flat-$R$.

So, in order to study a finitely accessible additive category we can restrict ourselves to the full subcategory of flat modules over a ring with enough idempotents. 


The following result can be proved as in the unitary case (see \cite[Lemma 5.3.12]{EnochsJenda}):

\begin{lem}
Let $T$ be a ring with enough idempotents with $|T| = \kappa$. For each unitary $T$-module $M$ and element $x \in M$ there exists a pure submodule $S$ of $M$ containing $x$ such that $|S| \leq \kappa$.
\end{lem}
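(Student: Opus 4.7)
The plan is to mimic the standard construction for unitary rings: build $S$ as the union of an ascending chain $S_0\subseteq S_1\subseteq \cdots$ of submodules of $M$, each of cardinality at most $\kappa$, where at stage $n+1$ one adds, for every finite system of linear equations with parameters in $S_n$ that happens to be solvable in $M$, a chosen solution from $M$. The purity of the union will then be automatic from the system-of-equations characterization of purity already invoked in the paper (any finite system over $S=\bigcup_n S_n$ has all its parameters in some $S_n$).

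More concretely, set $S_0$ to be the unitary submodule generated by $x$, namely $xT=\sum_i xe_iT$; since $M$ is unitary, $x\in S_0$, and $|S_0|\leq |T|=\kappa$. Suppose $S_n$ has been constructed with $|S_n|\leq\kappa$. The collection of all finite systems
\[
\sum_{i=1}^{n}X_i t_{ij}=s_j,\qquad j=1,\dots,m,
\]
with $t_{ij}\in T$ and $s_j\in S_n$, has cardinality bounded by $\bigl(|T|\cdot|S_n|\bigr)^{<\omega}\leq \kappa$ (assuming $\kappa$ infinite; the finite case is trivial since one may then take $S=M$). For each such system that is solvable in $M$, fix one solution $(y_1,\dots,y_n)\in M^{n}$, and let $S_{n+1}$ be the unitary submodule of $M$ generated by $S_n$ together with all these chosen $y_i$'s. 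Since we have added at most $\kappa$ elements and then closed under the action of $T$ (which has $\kappa$ elements), $|S_{n+1}|\leq \kappa$.

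Finally put $S=\bigcup_{n<\omega}S_n$. Then $|S|\leq \aleph_0\cdot \kappa=\kappa$ and $x\in S_0\subseteq S$. To verify purity, take any finite system $\sum_i X_it_{ij}=s_j$ over $S$ which is solvable in $M$. Because only finitely many parameters $s_j$ appear and the chain $(S_n)$ is ascending, there exists $n$ with all $s_j\in S_n$; by construction $S_{n+1}$ contains a solution, hence so does $S$. By the system-of-equations characterization of pure submodules (valid for modules over rings with enough idempotents exactly as in the unitary case cited in Section~1), $S$ is pure in $M$.

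The only genuine issue is the bookkeeping: one must check that the notion of ``unitary submodule generated by a subset'' behaves as expected over a ring with enough idempotents (closure under the $e_i$'s costs nothing extra in cardinality since $\sum_i Te_i=T$ already has cardinality $\kappa$), and that the cited system-of-equations characterization of purity carries over verbatim to the non-unital setting---both facts are routine, and this is precisely where the reference to \cite[Lemma 5.3.12]{EnochsJenda} absorbs the work. No subtle obstacle arises beyond verifying that these standard reductions go through.
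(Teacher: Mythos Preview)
Your argument is correct and is precisely the standard Enochs--Jenda construction to which the paper defers; the paper itself does not spell out a proof but simply cites \cite[Lemma 5.3.12]{EnochsJenda} for the unitary case and asserts that the same argument works here. Your handling of the non-unital bookkeeping (unitary submodule generated by a subset, and the system-of-equations characterization of purity) is exactly the adaptation the paper has in mind.
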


Now we prove that any accessible category satisfies the Baer's criterion.

\begin{theorem}
Let $\mathcal K$ be a finitely accessible additive category. There exists a cardinal number $\kappa$ such that if $\mathcal G$ is the set of objects
\begin{displaymath}
\left\{\bigoplus_{i \in I}G_i\mid G_i \textrm{ is finitely presented and }|I| \leq \kappa\right\}
\end{displaymath} 
then any $\mathcal G$-pure-injective object is pure-injective.
\end{theorem}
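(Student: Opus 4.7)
The plan is to reduce to the category of flat modules over the functor ring of $\mathcal K$ and then run a Baer-type Zorn argument. Writing $T=\widehat{\End}_{\mathcal K}(F)$ for the functor ring (where $F$ is the direct sum of representatives of isomorphism classes of finitely presented objects), the equivalence $\mathcal K\simeq\textrm{Flat-}T$ converts pure conflations in $\mathcal K$ into short exact sequences of flat $T$-modules, and the objects of $\mathcal G$ correspond to direct sums of $\leq\kappa$ modules of the form $e_iT$, which are finitely generated projective in $\textrm{Mod-}T$. I take $\kappa=\max(|T|,\aleph_0)$.

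Let $E$ be $\mathcal G$-pure-injective, $u\colon A\to B$ a pure monomorphism, and $f\colon A\to E$. The first step is to apply Zorn's lemma to the poset of pairs $(C,g)$ with $A\le C\le B$ pure and $g\colon C\to E$ extending $f$; chains admit upper bounds via the direct limit, which remains pure and flat. Pick a maximal element $(C,g)$. Assuming $C\ne B$, the preceding lemma yields a nonzero pure submodule $S\leq B/C$ with $|S|\le|T|\le\kappa$. Let $C'\leq B$ be the preimage of $S$, so that $C\le C'$ is pure and $C'/C\cong S$. The remaining task is to extend $g$ to $C'$, contradicting maximality.

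To accomplish the extension I would form the pushout along $g$, producing a conflation
\begin{displaymath}
\eta\colon\quad 0\to E\to P\to S\to 0
\end{displaymath}
whose splitting is equivalent to the existence of the extension. Since $S$ is flat with $|S|\le\kappa$, I can produce an epimorphism
\begin{displaymath}
0\to K\to G\to S\to 0
\end{displaymath}
from $G=\bigoplus_{s\in S}e_{i(s)}T\in\mathcal G$; flatness of $G$ and $S$ implies $K$ is flat, so this is a conflation in $\mathcal K$. Applying $\Hom_{\mathcal K}(-,E)$ to it yields the long exact sequence
\begin{displaymath}
\Hom_{\mathcal K}(G,E)\to\Hom_{\mathcal K}(K,E)\to\Ext^1_{\mathcal K}(S,E)\to\Ext^1_{\mathcal K}(G,E).
\end{displaymath}
The first arrow is surjective because $E$ is $\mathcal G$-pure-injective and $K\hookrightarrow G$ is a conflation with $G\in\mathcal G$; the last term vanishes since each $e_iT$ is projective in $\textrm{Mod-}T$ (as a direct summand of $T$), hence projective with respect to the coarser conflation structure of $\mathcal K$, and direct sums of projectives are projective. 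Therefore $\Ext^1_{\mathcal K}(S,E)=0$, $\eta$ splits, and $g$ extends to $C'$, contradicting maximality.

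The main obstacle will be the bookkeeping of the functor-ring correspondence: verifying that $\mathcal K$-conflations are exactly the short exact sequences of flat $T$-modules, that elements of $\mathcal G$ correspond to direct sums of $\le\kappa$ finitely generated projective $T$-modules of the form $e_iT$, and that the cardinality bound $|S|\le|T|$ given by the preceding lemma transports through the equivalence so that the pure-projective cover $G$ can genuinely be arranged within $\mathcal G$.
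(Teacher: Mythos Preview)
Your argument is correct and takes a genuinely different route from the paper's. The paper first reduces, via Lemma~\ref{l:AInjective} and the fact that every flat $T$-module is a pure quotient of a direct sum of finitely presented modules, to showing that $E$ is pure-injective with respect to direct sums $\bigoplus_{i\in I}F_i$; it then builds compatible transfinite filtrations $\{I_\alpha\}$ of the index set and $\{K_\alpha\}$ of the pure submodule $K$, with $|I_{\alpha+1}\setminus I_\alpha|\le\kappa$, and extends $f$ step by step using $\mathcal G$-injectivity at each successor stage. Your approach, by contrast, attacks an arbitrary pure monomorphism directly via Zorn on pure intermediate extensions, and handles the successor step by an $\Ext^1$-vanishing computation rather than an explicit extension.

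What each buys: the paper's filtration argument is more elementary in that it never invokes the long exact sequence of $\Ext$ in an exact category, but it requires the preliminary reduction to ambient objects that are direct sums of finitely presented modules and some care in matching the filtrations of $I$ and $K$. Your argument is cleaner at the inductive step (splitting of $\eta$ follows immediately once $\Ext^1_{\mathcal K}(S,E)=0$), and it treats all pure inflations uniformly without the reduction; the price is the use of the $\Hom$--$\Ext$ six-term sequence and the observation that direct sums of the $e_iT$ are projective in $\textrm{Mod-}T$ and hence pure-projective in $\mathcal K$. Both proofs rest on the same L\"owenheim--Skolem lemma, but you apply it to the quotient $B/C$ while the paper applies it inside $K$. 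The bookkeeping you flag (flatness of $B/C$, purity of $C'$ in $B$, and that $|S|\le\kappa$ suffices to realise the covering $G\in\mathcal G$) all goes through routinely.
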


\begin{proof}
As mentioned before, we may assume that $\mathcal K$ is the full subcategory consisting of unitary flat right $T$-modules over a ring $T$ with enough idempotents. Let $M$ be a flat $T$-module which is $\mathcal G$-pure-injective. In order to see that it is pure-injective we
  only have to see that it is pure-injective with respect to all
  direct sums of finitely presented modules by Lemma
  \ref{l:AInjective} and \cite[33.5]{Wisbauer}.

  Let $I$ be a set, $\{F_i:i \in I\}$ a family of finitely presented
  modules in $\mathcal K$ (that is, finitely generated and projective in Mod-$T$), $K$ a pure submodule of $\bigoplus_{i \in I}F_i$ and
  $f:K \rightarrow M$ a morphism. Denote by $|K| = \lambda$. Using the preceding lemma, we can construct a chain of subsets of $I$,
  $\{I_\alpha:\alpha < \lambda\}$, satisfying, for each $\beta < \lambda$ that
  $\bigcup_{\alpha < \lambda}I_\alpha=I$,
  $|I_{\beta+1}-I_\beta| \leq \kappa$ and
  $I_\beta = \bigcup_{\alpha < \beta}I_\alpha$ (when $\beta$ is limit); and a chain of pure submodules of $K$,
  $\{K_\alpha:\alpha < \lambda\}$, satisfying, for each $\beta < \lambda$, that
  $K=\bigcup_{\alpha < \lambda}K_\alpha$, $K_\beta \leq \bigoplus_{i \in I_\beta}F_i$ and
  $K_\beta = \bigcup_{\alpha < \beta}K_\alpha$ (when $\beta$ is limit).

  Now, using that $M$ is $\mathcal G$-injective, we can define,
  recursively on $\alpha$, a morphism
  $f_\alpha:\bigoplus_{i \in I_\alpha}F_i \rightarrow M$ such that
  $f_\alpha \rest K_\alpha = f \rest K_\alpha$ and
  $f_\alpha \rest \bigoplus_{i \in I_\beta}F_i = f_\beta$ for each
  $\beta < \alpha$. Then the limit of all these $f_\alpha$'s is the
  extension of $f$ to $\bigoplus_{i \in I}F_i$. This finishes the
  proof.
\end{proof}

Then we get:

\begin{cor} \label{pinj}
Let $\mathcal K$ be a finitely accessible additive category. Then $\mathcal K$ has enough pure-injective objects. If, in addition, $\mathcal K$ is abelian, then $\mathcal K$ has pure-injective hulls.
\end{cor}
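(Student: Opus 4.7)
The plan is to apply Theorems~\ref{t:ExistenceInjectives} and~\ref{t:ExistenceHulls} (together with the corollary immediately following the latter) to the exact category $(\mathcal{K}; \mathcal{E}_{\mathcal{P}})$ given by the pure-exact structure, using the Baer-type criterion just established as input.

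For the first assertion (enough pure-injectives), I would verify the two hypotheses of Theorem~\ref{t:ExistenceInjectives}. Condition~(1), that transfinite compositions of pure inflations exist and are pure inflations, follows from the facts that $\mathcal{K}$ has all direct limits and that $\Hom_{\mathcal{K}}(F,-)$ preserves direct limits for every finitely presented $F$: given a $\lambda$-sequence of pure inflations, testing $\Hom_{\mathcal{K}}(F,-)$ against its transfinite composition still yields an exact sequence of abelian groups, so the composition is pure. Condition~(2) is supplied by the preceding Baer-type theorem: the set $\mathcal{G}$ constructed there has the property that every $\mathcal{G}$-pure-injective object is pure-injective. I would then take $\mathcal{H}$ to be a set of representatives of all pure inflations $K \to G$ with $G \in \mathcal{G}$; using the equivalence of $\mathcal{K}$ with the full subcategory of unitary flat modules over the functor ring $T$ (a ring with enough idempotents), each $G \in \mathcal{G}$ has bounded cardinality, so its pure submodules form a set and are themselves of bounded cardinality, hence small with respect to the pure-exact structure. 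Theorem~\ref{t:ExistenceInjectives} then yields, for every $X \in \mathcal{K}$, a pure inflation $X \to E$ with $E$ pure-injective.

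For the second assertion, assume in addition that $\mathcal{K}$ is abelian. A finitely accessible abelian category is Grothendieck: direct limits are exact (so AB5 holds), the direct sum of a representing set of finitely presented objects is a generator, and the category is locally small by \cite[Proposition IV.6.6]{Stenstrom}. The pure-exact structure $\mathcal{E}_{\mathcal{P}}$ is closed under well-ordered direct limits by the same argument used for Condition~(1) above. The first part of the statement provides, for each object $X$, a pure inflation $X \to E$ with $E$ pure-injective, so the hypotheses of the corollary following Theorem~\ref{t:ExistenceHulls} are satisfied with $\mathcal{F} = \mathcal{E}_{\mathcal{P}}$. That corollary then furnishes a pure-injective envelope of $X$, which it also declares to be a pure-injective hull.

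The main obstacle I anticipate is the set-theoretic and smallness bookkeeping in the verification of Condition~(2) of Theorem~\ref{t:ExistenceInjectives}; once one translates $\mathcal{K}$ into flat modules over a ring with enough idempotents, this reduces to the standard observation that modules of bounded cardinality are small with respect to well-ordered colimits of pure inflations, which in turn rests on the fact that any morphism from such a module to a well-ordered colimit of pure monomorphisms factors through a stage by a counting argument.
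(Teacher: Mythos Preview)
Your proposal is correct and follows essentially the same route as the paper: verify the hypotheses of Theorem~\ref{t:ExistenceInjectives} via Remark~\ref{r:Hyphoteses} and the preceding Baer criterion, then invoke Theorem~\ref{t:ExistenceHulls} (or its corollary) for the abelian case. One small point: your direct argument for condition~(1)---checking $\Hom_{\mathcal K}(F,-)$ on the transfinite composition---is not quite enough on its own, since being a pure inflation requires the morphism to be the kernel of a pure kernel--cokernel pair, and a general finitely accessible category need not have cokernels; the paper handles this by passing to $\textrm{Flat-}T \subseteq \textrm{Mod-}T$, where pure-exact sequences correspond to ordinary exact sequences and AB5 in $\textrm{Mod-}T$ closes the loop, so you should invoke that translation for~(1) as well as for~(2).
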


\begin{proof}
Again, we can assume that $\mathcal K$ is the full subcategory consisting of unitary flat right $T$-modules over a ring $T$ with enough idempotents. Since direct limits of conflations in Mod-$T$ are conflations and $\mathcal K$ is closed under direct limits in Mod-$T$, direct limits of pure-exact sequences in $\mathcal K$ are, again, pure-exact. Then $\mathcal K$ satisfies (1) of
  Theorem \ref{t:ExistenceInjectives}. Let $\mathcal G$ be the set of objects constructed in the previous result and let us see that $\mathcal G$
  satisfies the conditions of the Remark \ref{r:Hyphoteses}. Since Mod-$T$ is
  locally small and each module is small, $\mathcal G$ satisfies (1) and
  (2) of Remark \ref{r:Hyphoteses}. Moreover, it satisfies (3) by the preceding theorem. By Theorem \ref{t:ExistenceInjectives}, $\mathcal K$ has enough injective objects.

If $\mathcal K$ is abelian, then the existence of pure-injective hulls follows from Theorem \ref{t:ExistenceHulls}.
\end{proof}

\bigskip

\bibliographystyle{abbrv}

\bigskip

\bigskip

\end{document}